\numberwithin{equation}{section}
\theoremstyle{plain}
\newtheorem{prop}{Proposition}
\newtheorem{theo}[prop]{Theorem}
\newtheorem{coro}[prop]{Corollary}
\newtheorem{lemm}[prop]{Lemma}
\theoremstyle{definition}
\newtheorem{defi}[prop]{Definition}
\newtheorem{rema}[prop]{Remark}
\newtheorem{exam}[prop]{Example}
\newcommand{\eqto}{\stackrel{\lower1.5pt\hbox{$\scriptstyle\sim\,$}}\to}
\DeclareMathOperator{\Spec}{Spec}
\DeclareMathOperator{\Proj}{Proj}
\DeclareMathOperator{\Bun}{Bun}
\DeclareMathOperator{\chara}{char}
\def\ra{\rightarrow}
\def\cD{{\mathcal D}}
\def\cI{{\mathcal I}}
\def\cL{{\mathcal L}}
\def\cO{{\mathcal O}}
\def\cQ{{\mathcal Q}}
\def\cW{{\mathcal W}}
\def\cX{{\mathcal X}}
\def\cY{{\mathcal Y}}
\def\oM{{\overline{M}}}
\newcommand{\wM}{\widetilde{\mathcal{M}}}
\newcommand{\wpM}{\widetilde{\mathcal{M}}^{+}}
\def\fS{{\mathfrak S}}
\def\bG{{\mathbb G}}
\def\bP{{\mathbb P}}
\def\bQ{{\mathbb Q}}
\def\bZ{{\mathbb Z}}
\def\bC{{\mathbb C}}
\def\F{{\mathbb F}}
\def\Z{{\mathbb Z}}
\def\sD{{\mathsf D}}
\def\sB{{\mathsf B}}
\def\sA{{\mathsf A}}
\def\bA{{\mathbb A}}
\def\bF{{\mathbb F}}
\def\Bl{\mathrm{Bl}}
\def\Pic{\mathrm{Pic}}
\def\End{\mathrm{End}}
\def\Aut{\mathrm{Aut}}
\def\Gr{\mathrm{Gr}}
\def\PGL{\mathrm{PGL}}
\def\Sp{\mathrm{Sp}}
\def\GL{\mathrm{GL}}
\def\Ext{\mathrm{Ext}}
\def\Hom{\mathrm{Hom}}
\def\lim{\mathrm{lim}}
\def\Prym{\mathrm{Prym}}
\def\Sym{\mathrm{Sym}}
\def\IJ{\mathrm{IJ}}
\def\JJ{\mathrm{J}}
\author{}
\address{}
\email{}
\begin{document}

\title[Moduli of Del Pezzo surfaces]{On the moduli of degree 4 Del Pezzo surfaces}

\date{December 22, 2013}

\dedicatory{To Professor Mukai, with admiration}

\author{Brendan Hassett}
\address{
  Department of Mathematics, MS 136,
  Rice University,
  Houston, TX 77005, USA
}
\email{hassett@rice.edu}
\author{Andrew Kresch}
\address{
  Institut f\"ur Mathematik,
  Universit\"at Z\"urich,
  Winterthurerstrasse 190,
  CH-8057 Z\"urich, Switzerland
}
\email{andrew.kresch@math.uzh.ch}

\author{Yuri Tschinkel}
\address{Courant Institute, New York University, New York, NY 11012, USA}
\address{Simons Foundation, 160 Fifth Avenue, New York, NY 10010, USA}
\email{tschinkel@cims.nyu.edu}

\begin{abstract}
We study irreducibility of 
families of degree 4 Del Pezzo surface fibrations over curves.  
\end{abstract}

\maketitle

\section{Introduction}
\label{intro}

Let $X\subset \bP^4$ be a smooth surface defined by the
intersection of two quadrics over an algebraically closed
field $k$ of characteristic different from $2$.
It is known that $X$ is characterized up to isomorphism by the
degeneracy locus of the
pencil of quadrics containing $X$,
i.e., by the form 
\begin{equation}
\label{eqn.f}
f(u,v)=\det(uP+vQ),
\end{equation}
where $P$ and $Q$ are
symmetric $5\times 5$ matrices whose associated quadratic forms
define $X$.
The two-dimensional space of binary quintic forms with nonvanishing discriminant
up to linear change of variable serves as a moduli space of smooth
Del Pezzo surfaces of degree 4.
Mabuchi and Mukai \cite{MM} studied this
from the perspective of Geometric
Invariant Theory.

Any (nontrivial) family $\cX \to \bP^1$ of degree 4 Del Pezzo surfaces
necessarily contains singular fibers.
Generically, these are Del Pezzo surfaces with a single $\sA_1$-singularity.
So a study of families of degree 4 Del Pezzo surfaces necessarily entails
a moduli problem which admits Del Pezzo surfaces with one $\sA_1$-singularity.

It is known that a general smooth
Del Pezzo surface of degree 4 has automorphism group $(\bZ/2\bZ)^4$.
A notable incongruity with standard moduli problems such as
stable curves of genus $g\ge 2$ is that automorphism groups can
\emph{decrease}, rather than increase, upon specialization.
Indeed, the general $\sA_1$-singular degree 4 Del Pezzo surface
has only 8 automorphisms \cite{hosoh}.

In this paper we study the moduli problem of degree 4 Del Pezzo surfaces.
We relate it to the moduli problem of log general type surfaces, as worked out
by Hacking, Keel, and Tevelev \cite{HKT}, where the above-mentioned incongruity
disappears.
We give an explicit description of families of degree 4 Del Pezzo surfaces over
$\bP^1$, general in the sense of having smooth total space,
fibers with at most one $\sA_1$-singularity, and maximal monodromy of
the lines in smooth fibers.
A single discrete invariant, the \emph{height},
is proportional to the number of singular fibers.
We establish the irreducibility of the space of general families
of given height, with exceptional behavior for a few small heights.

Given a family of Del Pezzo surfaces of degree 4 over a base $T$, the
fiberwise vanishing locus of the form \eqref{eqn.f} determines a
\emph{spectral cover} of degree 5 over $T$.
Assuming the family is sufficiently general, the singular members of the
pencils of quadrics are of nodal type, hence contain two families of planes.
This defines a double cover of the spectral cover.
In the case of a family over $\bP^1$, we get
\begin{equation}
\label{eqn.toweroverP1}
\tilde D\to D\to \bP^1.
\end{equation}
Our approach is to obtain a description of families of Del Pezzo surfaces of degree 4
over $\bP^1$ in terms of the spectral curve $D$ together with tower \eqref{eqn.toweroverP1},
and to relate the moduli problem for families to the moduli problem for such
towers.
The machinery of moduli of log general type surfaces is used to show that
every such tower does indeed come from a family of Del Pezzo surfaces of degree 4.

In Section \ref{spectralcover},
we recall the construction of the spectral cover;
in particular, the spectral curve in \eqref{eqn.toweroverP1} comes embedded in a
Hirzebruch surface $\F\to \bP^1$.
It is then relevant to understand the monodromy of 2-torsion of the Jacobian of curves in
Hirzebruch surfaces, which is described in Section \ref{curvesmonodromy}.
Section \ref{sec.nonsingline} contains a classical treatment of
degree 4 Del Pezzo surfaces containing a line disjoint from the
singular locus, an ingredient in the comparison of moduli problems.
We discuss the connection between moduli of Del Pezzo surfaces of degree 4 and
of binary quintic forms in Section \ref{quintic}.
In Section \ref{modulistacks} we begin the study of moduli of degree 4
Del Pezzo surfaces, and we introduce genericity conditions on families
of degree 4 Del Pezzo surfaces in Section \ref{sect.genericity}.
Section \ref{loggeneraltype} recalls the moduli problem of log general
type surfaces, which is related to our moduli problem in Section \ref{comparison}.
In Section \ref{maps} we state and prove our main theorems, describing and
enumerating the components of general families of degree 4 Del Pezzo surfaces
over $\bP^1$.
In an Appendix, we show that the discrete invariant of families introduced
here agrees with the height defined in \cite{HT13}.

\medskip
\noindent\textbf{Acknowledgements.}
The first author was supported by NSF grants 0901645, 0968349, and 1148609;
The second author was supported by the SNF.
The third author was supported by NSF grants 0968349 and 1160859.

\section{Spectral cover}
\label{spectralcover}

We work over a base field $k$, perfect of characteristic different from $2$.
A Del Pezzo surface is a smooth projective surface (always assumed
geometrically integral) $X$ with ample anticanonical line bundle $\omega_X$.
The degree of $X$ is the self-intersection of the anticanonical class.
A Del Pezzo surface of degree 4 is embedded by the
anticanonical linear system as a complete intersection of two quadrics
in $\bP^4$, and is geometrically isomorphic
to the blow-up of $\bP^2$ at 5 points in general position
(i.e., no three on a line, and not all lying on a conic).
Geometrically, the curves with self-intersection $-1$ are the
16 lines on such a surface, the Picard group has
rank 6, and the primitive Picard group (i.e., the subgroup orthogonal to the
anticanonical class) is a root lattice of type $\sD_5$.
In particular, all Galois symmetries factor through the Weyl group $W(\sD_5)$.

Singular Del Pezzo surfaces, i.e., normal projective surfaces with
$\mathsf A\mathsf D\mathsf E$-singularities and ample anticanonical class, are extensively studied,
e.g., \cite{coraytsfasman}.
Such a surface $X$ has a minimal resolution $\widetilde{X}$.
The anticanonical linear system (or a suitable multiple) induces the morphism
$\widetilde{X}\to X$.
When $X$ is geometrically
a Del Pezzo surface of degree 4 with one $\sA_1$-singularity,
$\widetilde{X}$ has one curve with self-intersection $-2$, contracted under
the morphism $\widetilde{X}\to X\subset \bP^4$, and has, geometrically,
12 curves with self-intersection $-1$.
Unlike the smooth case, the
pencil of quadrics containing $X$ has a distinguished member, appearing
with multiplicity 2 in the degeneracy locus.

Let $T$ be a $k$-scheme of finite type.
Any flat family of possibly singular
degree 4 Del Pezzo surfaces $\pi\colon \cX\to T$
gives rise to a degree 5 cover $D\to T$ which encapsulates the
degeneracy loci of the pencils of quadrics associated with the
fibers of $\pi$, as follows.
The relative anticanonical line bundle $\omega^{-1}_{\pi}$ is ample and 
induces a closed immersion
$$
\cX\to \bP((\pi_*\omega_\pi^{-1})^\vee)
$$
over $T$;
we let $\pi$ also denote
projection $\bP((\pi_*\omega_\pi^{-1})^\vee) \ra \bP^1$.
The composition 
\begin{align} \label{eqn:composition}
(\wedge^5(\pi_*\omega_\pi^{-1})^\vee)^{{\otimes}2}
&\stackrel{\det}\to
\Sym^5 (\Sym^2 ((\pi_*\omega_\pi^{-1})^\vee)) \notag \\
&\cong
\Sym^5((\Sym^2 \pi_*\omega_\pi^{-1})^\vee)
\to
\Sym^5((\pi_*\cI_{\cX}(2))^\vee)
\end{align}
gives rise to an ideal sheaf on
$\bP(\pi_*(\cI_{\cX}(2)))$ and hence the \emph{spectral cover}
\[
D\subset \bP(\pi_*(\cI_{\cX}(2)))\to T.
\]

\section{Curves and their monodromy}
\label{curvesmonodromy}
In this section, we study monodromy groups of some families of curves
in Hirzebruch surfaces.
We recall that Hirzebruch surfaces, being smooth complete toric varieties,
have the property \cite[Thm.\ 6.1.15]{CLS} that every ample line bundle is very ample.

\begin{lemm}
\label{lem.monodromy}
Let $D$ be a smooth curve on $\bP^1\times \bP^1$ of bidegree $(a,b)$, with
$a\ge b\ge 3$, respectively a smooth curve on the Hirzebruch surface $\F_1$
in the class $af+b\xi$, with $a>b\ge 3$, where $f$ denotes the class of
a fiber of $\F_1\to\bP^1$ and $\xi$ denotes the class of the
$(-1)$-curve.
The monodromy action on $H^1(D,\Z)$
of the space of smooth curves in the same curve class as $D$
is the full symplectic group $\Sp(H^1(D,\Z))$,
in each of the following cases:
\begin{itemize}
\item[(i)] $D\subset \bP^1\times \bP^1$ with $a$ or $b$ odd;
\item[(ii)] $D\subset \F_1$ with $a$ even or $b$ odd.
\end{itemize}
\end{lemm}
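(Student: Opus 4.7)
The plan is to apply Picard--Lefschetz theory to a generic Lefschetz pencil in the complete linear system $|L|$, where $L$ is the line bundle on $\F_e$ ($e=0$ or $1$) realizing the prescribed curve class. Because $\F_e$ is smooth toric and $L$ is very ample under our numerical assumptions, such pencils exist with smooth total space and only nodal singular fibers. The discriminant hypersurface $\Delta\subset|L|$ is irreducible for toric surfaces with the stated numerical data (its dual variety is irreducible), so the monodromy $\Gamma$ acts transitively on the set of vanishing cycles. By Picard--Lefschetz, $\Gamma\subset\Sp(\rH^1(D,\Z))$ is generated by the corresponding symplectic transvections, and by the invariant cycle theorem, using $\rH^1(\F_e,\Z)=0$, these vanishing cycles span $\rH^1(D,\Z)$.

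I then invoke the following dichotomy, going back to work of A'Campo, Janssen, and Beauville: a subgroup $G\subset\Sp(V)$, for $V=\rH^1(D,\Z/2)$ with its induced symplectic form, generated by transvections along a single $G$-orbit spanning $V$, is either all of $\Sp(V)$ or the stabilizer of a quadratic refinement $q$ of the symplectic form satisfying $q(\delta)=1$ on every vanishing cycle $\delta$. To conclude it therefore suffices to rule out the existence of such a $\Gamma$-invariant $q$ under our parity hypotheses.

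By the Mumford--Atiyah correspondence, quadratic refinements of the symplectic form on $V$ are in natural bijection with theta characteristics on $D$. Monodromy-invariance translates into the theta characteristic extending to a section of the universal theta characteristic torsor over $|L|^{\mathrm{sm}}$; using $\rH^1(\F_e,\cO_{\F_e})=\rH^1(\F_e,\Z/2)=0$ and the Leray spectral sequence for the universal curve over $|L|^{\mathrm{sm}}$, any such section arises by restriction from a line bundle $M\in\Pic(\F_e)$ with $2M\equiv K_{\F_e}+L$ in $\Pic(\F_e)$. On $\bP^1\times\bP^1$ with $L=\cO(a,b)$, the equation $2M=\cO(a-2,b-2)$ admits a solution if and only if $a$ and $b$ are both even; on $\F_1$, with $K_{\F_1}=-3f-2\xi$, the equation $2M=(a-3)f+(b-2)\xi$ admits a solution if and only if $a$ is odd and $b$ is even. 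Our hypotheses in (i) and (ii) exclude exactly these configurations, so no $\Gamma$-invariant $q$ exists; hence the mod-$2$ image of $\Gamma$ is all of $\Sp(V)$, and the integral statement follows by a standard lifting argument from the fact that $\Gamma$ contains the integral Picard--Lefschetz transvections along a transitive orbit of primitive vanishing cycles.

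The main obstacle is the step identifying $\Gamma$-invariant quadratic refinements with restrictions of line bundles from the ambient Hirzebruch surface: one must rule out the appearance of any new $\Gamma$-invariant $2$-torsion class in the relative Picard scheme of the universal curve beyond those pulled back from $\F_e$. The vanishings $\rH^1(\F_e,\cO_{\F_e})=\rH^1(\F_e,\Z/2)=0$ make this tractable via the Leray spectral sequence, but the argument must be set up carefully so that the appeal to global monodromy data is not circular; once this identification is in place, the parity calculation above completes the proof.
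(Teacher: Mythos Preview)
Your overall architecture---Lefschetz pencils, irreducibility of the discriminant, transitivity on vanishing cycles, reduction to the mod~$2$ picture---matches the paper's. But the two arguments diverge at the two crucial steps, and one of your divergences is a genuine gap.

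\textbf{The dichotomy step.} You assert that a subgroup of $\Sp(V)$, $V=H^1(D,\bZ/2)$, generated by transvections along a single spanning orbit is either all of $\Sp(V)$ or the stabilizer of a quadratic refinement. This is not true in the generality you state: symmetric groups $\fS_n$ in their standard permutation representation also occur in the classification of transvection-generated subgroups (and for $n\equiv 2\pmod 4$ they do not fix any quadratic refinement). The paper's own Example~\ref{examhyperell} exhibits exactly this phenomenon for hyperelliptic curves. Beauville's Th\'eor\`eme~3, which is what is actually being invoked, requires in addition that the monodromy contain a configuration of vanishing cycles forming an $E_6$ root system. The paper supplies this by an explicit degeneration: it takes the plane quartic $\{y^3=x^4\}$ with its $E_6$-singularity, transports it into $\bP^1\times\bP^1$ or $\F_1$ via suitable linear systems, and adds fibers and lines to reach the desired class. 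You have omitted this step entirely, and without it the dichotomy does not follow.

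\textbf{Ruling out the orthogonal case.} Here your route is genuinely different and, modulo the obstacle you flag, attractive. The paper degenerates $D$ to a union $D'\cup D''$ of two smooth curves meeting transversally in an \emph{odd} number of points; the corresponding vanishing cycles $\delta_1,\dots,\delta_{2p+1}$ sum to zero, forcing $q(\delta_j)=0$ for some $j$, a contradiction. You instead identify an invariant quadratic refinement with a theta characteristic cut out by a line bundle $M\in\Pic(\F_e)$ satisfying $2M\equiv K_{\F_e}+L$, and then read off the parity obstruction directly. The advantage of your approach is that it explains structurally \emph{why} the parity conditions in (i) and (ii) are exactly right; the paper's advantage is that the odd-intersection degeneration is elementary and self-contained, whereas your argument rests on the assertion that every monodromy-invariant theta characteristic arises from $\Pic(\F_e)$, which you correctly identify as the main obstacle and do not prove. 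That step can be made rigorous, but it is not free.

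In summary: the exclusion of $O(q)$ via theta characteristics is a legitimate alternative to the paper's odd-intersection trick, but you must still supply the $E_6$-degeneration (or an equivalent largeness input) before the $\Sp$/$O(q)$ dichotomy is available at all.
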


\begin{proof}
In each case the class of $D$ is very ample.
Hence the discriminant hypersurface in $|D|$ is irreducible with
generic point corresponding to a curve with a single node;
see, e.g., \cite[\S 1]{lamotke}.
The method outlined in \cite{beauville} is applicable, provided that we
verify:
\begin{itemize}
\item
$D$ may be degenerated to acquire an $\mathsf E_6$-singularity, such that the linear series
$|D|$ is versal for this singularity;
\item
$D$ degenerates to a union $D'\cup D''$ of two smooth curves meeting
transversally in an odd number of points. 
\end{itemize}
We recall the basic strategy.
There exists a Lefschetz pencil of curves in $|D|$.
The monodromy of the pencil is generated by symplectic reflections by vanishing cycles associated with each nodal fiber.
Lefschetz theorems \cite[p.~151]{GM} imply the full monodromy group is also generated by these reflections.
Moreover, the curves in $|D|$ with precisely one node form a connected set, so the
reflections are all conjugate under the monodromy group.  Squares of symplectic reflections generate 
the kernel of 
$$
\Sp(H^1(D,\bZ))\ra \Sp(H^1(D,\bZ/2\bZ)),
$$ 
so we only have to address the representation
$\pmod 2$.  The first assumption implies that the monodromy contains $W({\mathsf E}_6)$, as it contains
all the reflections associated with simple root/vanishing cycles; it
is thus either $\Sp(H^1(D,\bZ))$
or surjects onto a subgroup $O(q) \subset \Sp(H^1(D,\bZ/2\bZ))$ 
preserving a quadratic form $q$ with $q(\delta)=1$ for each vanishing cycle $\delta$
\cite[Th.~3]{beauville}.
The monodromy can factor through such a subgroup, e.g., when $a$ and $b$ are
both even, in case (i).
The second assumption precludes this:  Given a smoothing of $D' \cup D''$ in $|D|$, fix vanishing
cycles $\delta_1,\ldots,\delta_{2p+1}$ indexed by the nodes; the sum $\sum_{j=1}^{2p+1}\delta_j$ is homologous
to zero.  Since $q(\sum_j \delta_j)=0$ we must have $q(\delta_j)=0$ for some $j$, a contradiction.

To verify the ${\mathsf E}_6$ condition, we use the plane quartic
$C=\{y^3=x^4\}$, which has a singularity of this type, and whose
versal deformations can be realized with plane quartics.  
The image of $C$ under the linear system of quadrics through two general points of $C$ gives a nodal
curve in $\bP^1 \times \bP^1$ of bidegree $(3,3)$ with the same singularity.  
Adding appropriate fibers gives
curves with all desired bidegrees.  Similarly, blowing up a generic point of $C$ gives a curve
in $\F_1$ with the desired singularity and class $4f+3\xi$.  Adding lines (with class $\xi+f$)
and fibers gives the classes we seek.  
The final condition can be checked case by
case, e.g., for $\F_1$ with $a$ even consider $[D']=f+\xi$ and
$[D'']=(a-1)f+(b-1)\xi$, so $[D']\cdot [D'']=a-1$.
\end{proof}

\begin{exam}
\label{examhyperell}
Consider hyperelliptic curves of genus $g>1$.  Note that the general such curve---with 
the datum of a line bundle of degree $g+1$---arises as
a curve of bidegree $(g+1,2)$ in $\bP^1 \times \bP^1$.  
The monodromy of such curves has been studied \cite{ACampo}:  It is an
explicit subgroup $\Gamma \subset \Sp(H^1(D,\bZ))=\Sp_{2g}(\bZ)$, where
$$\Gamma=\{\gamma \in \Sp_{2g}(\bZ): \gamma(\text{mod } 2) \in \fS_{2g+2} 
\subset \Sp_{2g}(\bZ/2\bZ)\}.$$
The symmetric group comes from the monodromy action on the branch points 
$r_1,\ldots,r_{2g+2}$ of the degree-two map $D\ra \bP^1$.  Indeed,
any two-torsion point $\eta \in \JJ(D)[2]$ admits a unique expression 
$$\eta=\sum_{j\in S} r_j - n g^1_2, \quad S\subset \{1,\ldots,2g+2\},|S|=2n, 0\le n \le g/2.$$
The monodromy representation on $\JJ(D)[2]$ for $D$ hyperelliptic therefore
factors through the permutation representation on even subsets of the branch points.
The orbits of $\JJ(D)[2]$
correspond to integers $n=0,1,\ldots,\lfloor g/2\rfloor$.
\end{exam}

\section{Nonsingular lines on degree 4 Del Pezzo surfaces}
\label{sec.nonsingline}

We summarize the results of this section:
Assume the base field $k$ is perfect with characteristic different from $2$.
Let $X\subset \bP^4$ be a
complete intersection of two quadrics which is normal
and contains a line $L$
disjoint from the singular locus of $X$. We will call such $L$ a
\emph{nonsingular line}.
Projection from $L$ identifies $X$ with the
blow-up of the projective plane along a degree 5 subscheme $\Xi$
of a smooth conic $B$, making
$X$ a degree 4 Del Pezzo surface with
restricted singularities.
The conic, which is the image of $L$ under the projection, is
canonically identified with the pencil of
quadric hypersurfaces containing $X$, so that the
the locus of
singular members of the pencil (which carries a natural scheme structure)
corresponds to $\Xi$.

Let us write $\bP^4=\bP(k^5)$ and
$L=\bP(V)$ with $V\subset k^5$ a subspace of dimension 2.

\begin{prop}
\label{prop.linetgtplanes}
With the above notation, the morphism
$L\to \bP(k^5/V)$, sending $p\in L$ to the tangent plane $T_pX$,
is an isomorphism onto a conic $B\subset \bP(k^5/V)$.
\end{prop}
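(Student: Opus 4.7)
My plan is to realize $\phi \colon L \to \bP(k^5/V)$, $p \mapsto T_pX/V$, as the restriction to $L$ of the linear projection $\pi_L \colon X \to \bP(k^5/V)$ from $L$, and to analyze the defining linear system. Since $L$ lies in the smooth locus of $X$, the projection extends to a morphism on all of $X$ whose limiting value at $p \in L$ is the transverse tangent direction $T_pX/V$, giving $\phi = \pi_L|_L$.

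The projection is defined by the three linear forms in $V^\perp$, whose restrictions to $X$ are sections of $\cO_X(-K_X-L)$. The resulting map $V^\perp \to H^0(X, -K_X-L)$ is injective (since $X$ spans $\bP^4$), and a standard computation on the minimal resolution yields $h^0(X, -K_X-L) = 3$, so the map is an isomorphism. Using the short exact sequence for restriction along $L$, together with $(-K_X-L)\cdot L = 2$, the composition with $H^0(X, -K_X-L) \to H^0(L, \cO_L(2))$ has kernel $H^0(X, -K_X-2L)$. Hence $\phi$ is given by three quadratic forms on $L \cong \bP^1$, and it is an isomorphism onto a smooth conic precisely when these forms are linearly independent, equivalently when $H^0(X, -K_X-2L) = 0$.

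To establish this vanishing I would pass to the minimal resolution $\widetilde X$, on which $L$ is a smooth $(-1)$-curve disjoint from the exceptional $(-2)$-curves. The intersection numbers give $(-K_{\widetilde X}-2L)^2 = -4$ and $(-K_{\widetilde X}-2L) \cdot (-K_{\widetilde X}) = 2$, and an effective representative $E$ would also satisfy $L \cdot E = 3$ (derived from $(2L+E)^2 = (-K_{\widetilde X})^2 = 4$). Decomposing $E$ into $(-2)$-curves (which contribute $0$ to $L \cdot E$) plus irreducible components of positive anticanonical degree leaves only three possibilities for the latter part: $2L'$ with $L'$ a $(-1)$-curve, in which case $L \cdot E = 2 L \cdot L'$ is even and cannot equal $3$; $L_1' + L_2'$ for distinct $(-1)$-curves, in which case $L \cdot L_i' \in \{0, 1\}$ on a Del Pezzo of degree 4 forces $L \cdot E \le 2$; or a single irreducible conic, for which an analogous enumeration gives $L \cdot E \le 1$. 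The main obstacle is keeping this case analysis organized, but each case follows from standard intersection-theoretic bounds on $(-1)$-curves and conics on (weak) Del Pezzo surfaces of degree 4.
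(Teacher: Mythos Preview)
Your argument is correct. The identification $\phi=\pi_L|_L$, the computation $h^0(X,-K_X-L)=3$, the reduction to $H^0(X,-K_X-2L)=0$, and the final case analysis on the minimal resolution all go through; the edge cases where one of the $L_i'$ equals $L$ only make $L\cdot E$ smaller, so they cause no trouble. For case (c) you should note that on a weak degree~4 Del Pezzo any irreducible curve $C$ with $-K\cdot C=2$ has $C^2=0$ by adjunction and the Hodge index inequality, so $C$ lies in one of the ten numerical conic classes, for each of which $L\cdot C\le 1$ is an easy lattice check.

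The paper takes a quite different and much shorter route. Rather than studying the linear system $|-K_X-L|$ and proving a cohomology vanishing, it looks directly at the pullback under $\phi$ of the universal rank~$2$ quotient bundle $\cQ$ on $\bP(k^5/V)\cong\bP^2$. Writing $X=\{Q_1=Q_2=0\}$ and $L=\{x_2=x_3=x_4=0\}$, the embedded tangent plane at $p=[s:t:0:0:0]$ is cut out by the two linear forms $\sum_{j\ge 2}\frac{\partial Q_i}{\partial x_j}(p)\,x_j$; since $L\subset X$, these coefficients are \emph{linear} in $(s,t)$, so the surjection $(k^5/V)\otimes\cO_L\to\phi^*\cQ$ is a $2\times 3$ matrix of linear forms and $\phi^*\cQ\cong\cO_L(1)^2$. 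This immediately gives degree $2$, and it rules out $\phi$ factoring through a line $\ell\subset\bP^2$ (for then $\phi^*\cQ\cong\cO_L\oplus\cO_L(2)$), so $\phi$ is an isomorphism onto a smooth conic. The paper's approach is cleaner and uniform in the singularity type of $X$; your approach is more elementary but purchases the result at the cost of the effective-divisor case analysis you flagged.
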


\begin{proof}
Under the morphism $L\to \bP(k^5/V)\cong \bP^2$, the tautological rank two
quotient bundle $\cO_{\bP^2}^3\to \cQ$ pulls back to
$\cO_L^3\to \cO_L(1)^2$ given by a $2\times 3$ matrix of linear forms.
The morphism is therefore a closed immersion of degree 2.
\end{proof}

Projection from $L$ is a morphism
\[ \psi\colon X\to \bP(k^5/V)\cong \bP^2 \]
sending $p\in X\setminus L$ to the linear span of $L$ and $p$, and sending
$p\in L$ to $T_pX$.
A general hyperplane in $\bP^4$ containing $L$ intersects $X$ in the
union of $L$ and a
residual cubic curve having intersection number 2 with $L$.
The residual cubic curve belongs to the linear system
$\bP(H^0(X,\psi^*\cO_{\bP^2}(1)))$.
Its self-intersection number is 1, hence
the morphism $\psi$ is \emph{birational}.

Under $\psi$, a general member $D$ of the linear system
$\bP(H^0(X,\cO_X(1)))$ maps to an irreducible cubic curve $C\subset \bP^2$, and
the image linear system is spanned by:
\begin{itemize}
\item $B\cup \ell$, $B\cup \ell'$, $B\cup \ell''$ where
$\ell$, $\ell'$, $\ell''$ span
$\bP(H^0(\bP^2,\cO_{\bP^2}(1)))$;
\item an irreducible cubic curve $C$ as above;
\item another such irreducible cubic curve $C'$, with $B\cap C'\ne B\cap C$.
\end{itemize}
Let $\Xi=B\cap C\cap C'$ be the base locus of the linear system;
comparing self-intersection numbers of $D$ and $C$ we see that
$\deg(\Xi)=5$, and the linear system determines a morphism
\begin{equation}
\label{eqn.rho}
\rho\colon \Bl_{\Xi}(\bP^2)\to X.
\end{equation}
It is known that $\Bl_{\Xi}(\bP^2)$ is normal, and the fiber over
a geometric point of $\Xi$
is a copy of $\bP^1$ mapping to a line in $X$.
It follows that
$\rho$ is birational and finite, hence by
Zariski's main theorem is an isomorphism.

Let $Q$ be a quadric hypersurface containing $X$.
To $Q$ there is an associated symmetric bilinear form on $k^5$ (defined up
to scalar multiplication).
Since $L$ avoids the singular locus of $Q$,
this is a bilinear form of rank $\ge 4$, and $\bP(V^\perp)$ is
a plane containing $L$; we therefore have a morphism
\[
\varphi\colon \bP(H^0(\bP^4,\mathcal{I}_X(2)))\to \bP(k^5/V).
\]

\begin{lemm}
\label{lem.planecontL}
For any plane
$\Pi\subset \bP^4$ containing $L$ exactly one of the following statements is
true:
\begin{itemize}
\item[(i)]
$\Pi\notin B$, and as schemes we have
$\Pi\cap X=L\cup \{p\}$ for a point $p\in X$ not
in $L$ nor in any line $L'\subset X$ satisfying $L\cap L'\ne \emptyset$;
\item[(ii)]
We have $\Pi\in B$, i.e., $\Pi=T_pX$ for a unique $p\in L$, and
the scheme $\Pi\cap X$
is irreducible, has $L$ as reduced subscheme, and has a
unique embedded point, located at $p$;
\item[(iii)] $\Pi\in B$, and $\Pi\cap X=L\cup L'$ where $L'\subset X$ is a line
satisfying $L\cap L'=\{p\}$, with $\Pi=T_pX$.
\end{itemize}
\end{lemm}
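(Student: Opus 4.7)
The plan is to reduce to an explicit computation of the ideal of $\Pi \cap X$ in $\Pi$ and then do a case analysis on how the pencil of quadrics restricts to $\Pi$. First, choose coordinates $[x_0 : x_1 : x_2 : x_3 : x_4]$ on $\bP^4$ so that $\Pi = V(x_3, x_4)$ and $L = V(x_2, x_3, x_4) \subset \Pi$. Every quadric $Q$ in the pencil defining $X$ contains $L$, so its restriction to $\Pi$ is a conic through $L$, hence factors as $Q|_\Pi = x_2 \ell_Q$ for some linear form $\ell_Q \in k[x_0, x_1, x_2]_1$ (with $\ell_Q = 0$ exactly when $\Pi \subset Q$). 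Fix generators $Q_1, Q_2$ of the pencil and set $\ell_i := \ell_{Q_i}$; then $\Pi \cap X \subset \Pi \cong \bP^2$ is cut out by the ideal $(x_2 \ell_1, x_2 \ell_2)$. The argument splits according to the subspace $W := \operatorname{span}(\ell_1, \ell_2) \subset k[x_0, x_1, x_2]_1$ into three cases: $\dim W = 2$ with $x_2 \notin W$ (giving (i)); $\dim W = 2$ with $x_2 \in W$ (giving (ii)); and $\dim W \leq 1$ (giving (iii)).

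In the first case a linear change of variable reduces to $\ell_1 = x_0$ and $\ell_2 = x_1$, and the primary decomposition $(x_0 x_2, x_1 x_2) = (x_2) \cap (x_0, x_1)$ exhibits $\Pi \cap X$ as the disjoint union of $L$ with the point $p = [0:0:1] \notin L$; any line of $X$ through $p$ meeting $L$ would be forced to lie in $\Pi$, hence in $L \cup \{p\}$, which is absurd. In the second case we normalize to $\ell_1 = x_2$, $\ell_2 = x_0$, and the identity $(x_2^2, x_0 x_2) = (x_2) \cap (x_2^2, x_0)$ identifies $\Pi \cap X$ as $L$ with an embedded length-$2$ point at $p = [0:1:0] \in L$, while a direct computation of $dQ_1, dQ_2$ at $p$ gives $T_p X = \Pi$. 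In the third case, $\dim W \leq 1$ forces a nonzero combination of $Q_1, Q_2$ to vanish on $\Pi$, so we may assume $\Pi \subset Q_1$, equivalently $Q_1 \in (x_3, x_4)$, while $Q_2|_\Pi = x_2 \ell_2$ for some nonzero $\ell_2$; provided $\ell_2 \not\propto x_2$, a further coordinate change gives $\ell_2 = x_0$, the line $L_2 := V(x_0) \subset \Pi$ lies on $X$ (since $L_2 \subset \Pi \subset Q_1$ and $L_2 \subset V(Q_2|_\Pi) \subset Q_2$), $\Pi \cap X = L \cup L_2$ as a reduced Cartier divisor in $\Pi$, and $T_p X = \Pi$ at $p = L \cap L_2$ by the same tangent computation.

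The main obstacle is ruling out the remaining subcase $\ell_2 \propto x_2$ of the third case, which would yield $\Pi \cap X = 2L$ scheme-theoretically, a possibility absent from the statement. This is the only place where the hypothesis $L \cap \operatorname{sing}(X) = \emptyset$ is genuinely used: under this degeneracy both $Q_1$ and $Q_2$ lie in $(x_3, x_4) + k \cdot x_2^2$, so for each $q \in L$ the differentials $dQ_1|_q$ and $dQ_2|_q$ lie in the two-dimensional $\operatorname{span}(dx_3, dx_4)$, and their $2 \times 2$ determinant is a binary quadratic form on $L \cong \bP^1$; either it vanishes identically (making $X$ singular at every point of $L$) or its scheme-theoretic zero locus is a nonempty subscheme of $L$, and in both alternatives one contradicts the hypothesis. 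Mutual exclusivity of (i)--(iii) is then immediate: (i) is characterized by $\Pi \notin B$, (iii) by the presence of a second line in $\Pi \cap X$, and (ii) is what remains.
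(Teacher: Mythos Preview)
Your case split is exactly the paper's: whether the pencil restricts to a genuine pencil of conics in $\Pi$ (your $\dim W = 2$) or collapses, and in the former case whether the base point of the residual pencil of lines lies on $L$ (your $x_2 \in W$) or not. The one substantive difference is in excluding the degenerate possibility $\Pi \cap X = 2L$: the paper invokes Proposition~\ref{prop.linetgtplanes} (if $T_p X = \Pi$ for every $p \in L$ then $p \mapsto T_p X$ is constant, contradicting that it is a degree-$2$ embedding), whereas you argue directly via the $2\times 2$ Jacobian of $(Q_1,Q_2)$ along $L$. Your route is self-contained and makes explicit where the nonsingular-line hypothesis enters; the paper's is shorter once that proposition is in hand.

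Two small omissions. In your third case you write ``for some nonzero $\ell_2$'' without justifying $\ell_2 \ne 0$; but $\dim W = 0$ (i.e.\ $\Pi \subset X$) is covered by the very same Jacobian argument you give for $\ell_2 \propto x_2$, since then both $Q_i \in (x_3,x_4)$ already. More importantly, you never verify $\Pi \notin B$ in your first case, and this is part of statement~(i); it does not follow formally from the fact that cases~2 and~3 give $\Pi \in B$, because your trichotomy is indexed by $W$, not by membership in $B$. The fix is immediate with your setup: from $Q_i|_\Pi = x_{i-1}x_2$ one computes $dQ_i|_q \equiv q_{i-1}\,dx_2 \pmod{dx_3,dx_4}$ at $q = [q_0:q_1:0:0:0]\in L$, so $T_qX = \Pi$ would force $q_0 = q_1 = 0$.
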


\begin{proof}
If the pencil of quadric hypersurfaces containing $X$ restricts to a pencil
of conics in $\Pi$, then it decomposes as $L$ plus a residual pencil of lines
with a base point $p$.
If $p\notin L$ then we are in case (i), with $\Pi$ the linear span of
$L$ and $p$ and no line on $X$ through $p$ meeting $L'$.
If $p\in L$ then we are in case (ii).
Otherwise some member of the pencil of quadric hypersurfaces
contains $\Pi$, and for any other member $Q$ of the pencil we have
$\Pi\cap X=\Pi\cap Q$ of degree 2 in $\Pi$ and containing $L$.
We exclude $L$ having multiplicity 2 by Proposition \ref{prop.linetgtplanes},
and obtain therefore $L\cup L'$ as in case (iii).
\end{proof}

By Lemma \ref{lem.planecontL},
the morphism $\varphi$ factors through $B$.
Since by a Chern class computation the morphism $\varphi$ has degree $2$,
the morphism $\varphi$ determines an \emph{isomorphism}
\begin{equation}
\label{eqn.isopenciltoL}
\bP(H^0(\bP^4,\mathcal{I}_X(2)))\cong L.
\end{equation}
The morphism $\varphi$ sends singular members of the pencil of quadric
hypersurfaces to $\Xi$ and nonsingular members to
$B\smallsetminus \Xi$.

\begin{theo}
\label{thm.DP4asblowup}
Let $k$ be a perfect field of characteristic different from $2$,
and let $X\subset \bP^4=\bP(k^5)$ be a Del Pezzo surface of degree 4
containing a nonsingular line $L=\bP(V)$;
projection from $L$ identifies $X\cong \Bl_\Xi(\bP(k^5/V))$
with $\Xi\subset B=\{T_pX|p\in L\}$ uniquely determined of degree 5.
Then the isomorphism $\bP(H^0(\bP^4,\mathcal{I}_X(2)))\cong B$,
sending a quadric hypersurface $Q\supset X$
to the plane $\bP(V^\perp)$ with $V^\perp$ the orthogonal space to $V$ under
a symmetric bilinear form corresponding to $Q$, identifies the
scheme of singular members of the pencil of quadric hypersurfaces containing $X$
(defined by the vanishing of a determinant in an evident fashion) with $\Xi$.
\end{theo}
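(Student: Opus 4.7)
Nearly every assertion in the theorem has been established in the preceding discussion. The identification $X\cong \Bl_{\Xi}(\bP(k^5/V))$ amounts to the fact that the morphism $\rho$ of \eqref{eqn.rho}, constructed from the linear system of residual cubics, is an isomorphism; this was obtained via Zariski's main theorem after observing that $\rho$ is birational, finite, and lands on the normal surface $X$. The stated isomorphism between the pencil $\bP(H^0(\bP^4,\mathcal{I}_X(2)))$ and $B$ is the composition of the isomorphism of \eqref{eqn.isopenciltoL} with the identification $L\cong B$ supplied by Proposition \ref{prop.linetgtplanes}.

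What remains is to match, as subschemes of $B\cong \bP^1$, the locus of singular quadrics in the pencil with $\Xi$. Set-theoretic equality is immediate from Lemma \ref{lem.planecontL}: a plane $\Pi\supset L$ lies in some member of the pencil exactly when $\Pi\in B$, in which case the containing member is unique and singular (a smooth quadric in $\bP^4$ contains no plane). Following through with the construction underlying \eqref{eqn.rho}, the image in $B$ of this singular member lies in $\Xi$ precisely in cases (ii)--(iii) of the lemma, that is, on all of $\Xi$. Both subschemes are effective divisors of degree $5$ on $B\cong \bP^1$: the discriminant locus because $\det(uP+vQ)$ is a binary quintic in $(u,v)$, and $\Xi$ by the calculation $\deg\Xi=9-K_X^2=5$ forced by the linear system defining $\rho$.

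My plan for upgrading set-theoretic to scheme-theoretic equality is a flatness/deformation argument. Both constructions are functorial in the pair $(X,L)$, and descend over a versal deformation of this pair to effective relative Cartier divisors of degree $5$ inside the associated $\bP^1$-bundle. Over the open stratum where $X$ is smooth, both divisors are reduced and supported on the same $5$ geometric points, and so coincide there. By flatness with constant total degree, the two divisors then agree over the entire deformation base, and evaluating at the original $(X,L)$ yields the theorem.

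The main obstacle is verifying flatness of the universal $\Xi$ of constant degree $5$ across the deformation base. This is guaranteed by the constancy of $K_X^2=4$ and of the self-intersection of a residual cubic in the family, forcing $\deg\Xi=5$ in every fibre, but some care is needed at the most degenerate fibres (where $X$ acquires worse-than-$\sA_1$ singularities) to ensure the relevant length does not jump.
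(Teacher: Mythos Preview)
Your approach is correct and matches the paper's: both place $(X,L)$ in a generically smooth family, observe that the spectral locus and the blow-up center $\Xi$ each yield finite flat degree-$5$ subschemes of the relative $B$, and conclude from their agreement at the generic (smooth) fiber, where the result is classical. The paper streamlines this by taking a single irreducible one-parameter base $U$ with special fiber the given $(X,L)$ rather than the full versal deformation, so your closing worry about fibers with worse-than-$\sA_1$ singularities never arises---only $X$ itself occurs as a degenerate fiber.
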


When $X$ is smooth
the isomorphism \eqref{eqn.isopenciltoL} and statement of
Theorem \ref{thm.DP4asblowup} are classical; see \cite{skorobogatov}.
For singular $X$,
projection from nonsingular lines appears as a key ingredient already
in Segre's classical treatment \cite{segre}.

\begin{proof}[Proof of Theorem \ref{thm.DP4asblowup}]
Let $U$ be an irreducible smooth affine variety with $k$-point $u\in U$,
$\pi\colon \cX\to U$ a generically smooth family of Del Pezzo surfaces of degree 4,
and $\cL\subset \cX$ a family of lines; fix an identification
$\cX_{u}=\pi^{-1}(u)\cong X$ sending $\cL_u$ to $L$.
There are versions for families of the identifications
$\Bl_{\Xi}(\bP(k^5/V))\cong X$ and 
$\bP(H^0(\bP^4,\cI_X(2)))\cong L$
of \eqref{eqn.rho} and \eqref{eqn.isopenciltoL} respectively.
So we have subschemes of $\cL$ finite and flat over $U$,
corresponding to the spectral cover and the center of the blow-up, respectively.
Since they agree over the generic point of $U$, they must be equal.
\end{proof}

\section{Invariants of binary quintics}
\label{quintic}

We consider the space $\bP^5$ of binary quintic forms 
$\sum_{i=0}^5 A_ix^{5-i}y^i$
with standard $\PGL_2$-action.
Let $U\subset \bP^5$ denote the binary quintic forms with at most
double roots.  
Over any field,
this is the semistable and stable locus.

The algebra of invariant homogeneous polynomials is generated by 
classically known invariants
$I_d$ of degrees $d=4$, $8$, $12$, and $18$,
given explicitly in \cite[pp.~87-89]{schur} with coefficients in $\bQ$.
Moreover, $I^2_{18}$ may be expressed as a weighted-homogeneous
form $F\in \bQ[I_4,I_{8},I_{12}]$ of degree $36$
(see, e.g., \cite[\S~7.2]{gulbrandsen}).
So the invariant-theoretic quotient is
\[\Proj(\bQ[I_4,I_8,I_{12}]) = \bP(1,2,3)_{\bQ}.\]

The $\PGL_2$-action on $U$
has reduced finite stabilizer group schemes.
By \cite[(8.1)]{LMB} the stack quotient $[U/\PGL_2]$ is a
Deligne-Mumford stack, and is in fact
a separated Deligne-Mumford stack
(the standard valuative criterion for separation
may be checked easily).
It follows that the
evident morphism $[\overline{M}_{0,5}/\fS_5]\to [U/\PGL_2]$
contracting any component of a stable $5$-pointed genus $0$ curve
with exactly two marked points is proper; it is as well surjective and
is an isomorphism on the locus of five distinct points
on $\bP^1$.
In particular, $[U/\PGL_2]$ is proper over $\Spec(\Z)$.

Choose $J_4,J_8,J_{12} \in \bZ[A_0,\ldots,A_5]$ such that
$$\bZ[J_4,J_8,J_{12}]_d=\bZ[A_0,\ldots,A_5]_d \cap \bQ[I_4,I_8,I_{12}], \quad \text{for}\ d=4,8,12,$$
i.e., $J_4,J_8,$ and $J_{12}$ generate the invariants
in degrees $4,8,$ and $12$ over $\bZ$.  
Let $J_{18}$ be a multiple of $I_{18}$ with relatively prime integer coefficients.
A direct computation via Gr\"obner bases
shows the invariants $J_4$, $J_8$, and $J_{12}$ define a morphism
$[U/\PGL_2]\to \bP(1,2,3)$ {\em over the integers.}
It follows from $\Pic(\overline{M}_{0,5})^{\fS_5}\cong\Z$ that the
composite
\[ [\overline{M}_{0,5}/\fS_5]\to [U/\PGL_2]\to \bP(1,2,3)\]
is quasi-finite, hence each individual morphism is proper and quasi-finite.

Since, over a field of characteristic zero,
$\bP(1,2,3)$ is the coarse moduli space of $[U/\PGL_2]$, and also of
$[\overline{M}_{0,5}/\fS_5]$,
the same is then true over $\Spec(\Z)$ by an application of Zariski's Main Theorem
to the normal scheme $\bP(1,2,3)$, and over a field of positive characteristic
by the same argument, noting that the scheme loci of
$[\overline{M}_{0,5}/\fS_5]$ and of $[U/\PGL_2]$ are dense over any field.

\begin{lemm}
\label{UmodPGL2}
The stacks $[ \bP^5 / \PGL_2 ]$ and $[ U / \PGL_2 ]$
have Picard group isomorphic to $\bZ$,
generated by $H:=[Z(J_{18})]-4[Z(J_4)]$.
\end{lemm}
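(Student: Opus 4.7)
The plan is to compute $\Pic([\bP^5/\PGL_2])$ directly as a $\PGL_2$-equivariant Picard group, and then to transfer to $[U/\PGL_2]$ by a codimension argument. The unstable locus $\bP^5\setminus U$ parameterizes binary quintics with a root of multiplicity at least $3$, i.e., forms $(bx-ay)^3 Q(x,y)$ with $[a:b]\in\bP^1$ and $Q\in\bP^2$ a binary quadratic; this locus has dimension $3$, hence codimension $2$ in $\bP^5$. For the smooth Artin stack $[\bP^5/\PGL_2]$, removing a codimension-$2$ closed substack preserves the Picard group, so it suffices to show $\Pic([\bP^5/\PGL_2])=\bZ\cdot H$.

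Write $\bP^5=\bP(V)$ with $V=\Sym^5 W^*$ the space of binary quintics as a representation of $\GL_2=\GL(W)$. The line bundle $\cO(1)$ carries a canonical $\GL_2$-linearization, and connectedness of $\GL_2$ yields
\[
\Pic^{\GL_2}(\bP(V))\cong \bZ\oplus\bZ,\qquad (n,k)\longleftrightarrow \cO(n)\otimes\det{}^k,
\]
where the second factor records twisting by the character $\det^k$ of $\GL_2$. Such a linearization descends to $\PGL_2$ precisely when the central $\bG_m=\{\lambda I\}\subset\GL_2$ acts trivially on fibers; since $\lambda I$ acts on $V$ with weight $-5$, the center acts on $\cO(n)\otimes\det^k$ with weight $5n+2k$, so the descent condition is $5n+2k=0$, giving $(n,k)\in\bZ\cdot(2,-5)$. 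Hence $\Pic^{\PGL_2}(\bP^5)=\bZ\cdot H_0$ with $H_0:=\cO(2)\otimes\det^{-5}$, and the forgetful map $\Pic^{\PGL_2}(\bP^5)\to\Pic(\bP^5)$ sends $mH_0$ to $\cO(2m)$, in particular is injective.

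Each classical invariant $J_d$ is a $\GL_2$-semi-invariant section of $\cO(d)$, so its vanishing locus $Z(J_d)$ is a $\PGL_2$-invariant divisor with underlying line bundle $\cO(d)$; injectivity of the forgetful map then forces $[Z(J_d)]=(d/2)\,H_0$ for each even $d$. Specializing to $d=4$ and $d=18$ gives $H=[Z(J_{18})]-4[Z(J_4)]=9H_0-8H_0=H_0$, which therefore generates.

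The main subtlety is the equivariant Picard computation in step two: one must correctly track the weight of the central torus of $\GL_2$ on $V$, and verify that $\Pic^{\GL_2}(\bP^5)$ really splits as $\bZ\oplus\bZ$ (guaranteed here by connectedness of $\GL_2$ and the existence of a canonical linearization on $\cO(1)$ coming from $\bP^5=\bP(V)$). The codimension-$2$ excision is standard for smooth Artin stacks, and the identification of $H$ as the generator then follows formally; the weights are equivalently predicted by the classical weight-degree relation $w=dr/2$ for invariants of degree $r$ on binary $d$-ics, which for $d=5$ forces $r$ even and recovers precisely the multiples of $H_0$ attached to $J_4$ and $J_{18}$.
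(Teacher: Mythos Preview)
Your argument is correct. Both your proof and the paper's reduce to a $\GL_2$-equivariant computation, but the executions differ slightly. The paper identifies $[\bP^5/\PGL_2]$ with $[(\bA^6\smallsetminus\{0\})/\GL_2]$ for a suitably twisted linear action (so that the center of $\GL_2$ acts with weight $\pm 1$), then uses the further codimension-$2$ open immersion into $[\bA^6/\GL_2]$ to read off $\Pic([\bA^6/\GL_2])\cong\Pic(B\GL_2)\cong\bZ$ directly as the character group of $\GL_2$. You instead stay on $\bP^5$, compute $\Pic^{\GL_2}(\bP(V))\cong\bZ\oplus\bZ$ via $\cO(1)$ and $\det$, and then single out the rank-one sublattice on which the center acts trivially. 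The paper's route is a touch slicker once the twist is set up (one never sees the extraneous $\bZ$-factor), while yours is more explicit and avoids having to identify the correct determinant twist; your final identification of $H$ with the generator via injectivity of the forgetful map $\Pic^{\PGL_2}(\bP^5)\to\Pic(\bP^5)$ is clean and matches the paper's ``comparing characters'' step.
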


\begin{proof}
We may identify $[ \bP^5 / \PGL_2 ]$ with
$[ (\bA^6 \smallsetminus \{0\}) / \GL_2 ]$ with standard $\GL_2$-action
is twisted by the $(-2)$-power of the determinant representation.
Since the open immersions of regular stacks
\[ [ U / \PGL_2 ] \subset [ \bP^5 / \PGL_2 ]
\cong [ (\bA^6 \smallsetminus \{0\}) / \GL_2 ] \subset [ \bA^6/\GL_2 ] \]
each have complement of codimension $\ge 2$, they induce isomorphisms of
Picard groups, hence the Picard groups are identified with
\[ \Pic([\bA^6/\GL_2])\cong \Pic(B\GL_2)\cong \bZ. \]
(For these identifications of Picard groups, see \cite[Lem.\ 2]{edidingraham}.)
By comparing characters of $\GL_2$ we see that $H$ generates the
Picard group.
\end{proof}

For the rest of this section
we work over a field $k$ of characteristic different from $2$.
On the space of binary quintic forms the discriminant is an invariant of
degree $8$, defining a divisor $\Delta\subset \bP^5$ which on $U$
has singularities along quintic forms with two double roots.

\begin{lemm} \label{lemm:Kclass}
The canonical class $K_{[U/\PGL_2]}$ is $-3H$. 
\end{lemm}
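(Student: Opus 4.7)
The plan is to identify both sides with characters of $\GL_2$ via the identifications of Lemma~\ref{UmodPGL2}. Since the open immersions
\[
[U/\PGL_2] \subset [\bP^5/\PGL_2] \cong [(\bA^6 \smallsetminus \{0\})/\GL_2] \subset [\bA^6/\GL_2]
\]
have complements of codimension $\ge 2$, the canonical class of $[U/\PGL_2]$ agrees with that of $[\bA^6/\GL_2]$ under
$\Pic([\bA^6/\GL_2]) \cong \Pic(B\GL_2) \cong \bZ \cdot [\det]$.

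First I would compute the character of the canonical class. For the $G$-torsor $\pi\colon \bA^n \to [\bA^n/G]$, the short exact sequence
\[
0 \to \pi^*\Omega^1_{[\bA^n/G]} \to \Omega^1_{\bA^n} \to \mg^*\otimes \cO_{\bA^n} \to 0
\]
gives $K_{[\bA^n/G]}|_{\bA^n} = K_{\bA^n} \otimes \det \mg$ as equivariant line bundles. For $G = \GL_2$ the adjoint representation has trivial determinant, so the correction disappears, and $K_{\bA^n}$ has $G$-character $(\det\rho)^{-1}$, where $\rho$ is the representation on $\bA^n$. For $\rho = \Sym^5(\mathrm{std}) \otimes \det^{-2}$ on $\bA^6$, a direct calculation (the eigenvalues of $\mathrm{diag}(s,t)$ on $\Sym^5(\mathrm{std})$ multiply to $(st)^{15}$) gives $\det\rho = \det^{15}\otimes \det^{-12} = \det^{3}$. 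Hence $K$ corresponds to $\det^{-3}$, i.e.\ to $-3$ in $\bZ$.

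Next I would compute the character of $H$. With the twist $\det^{-2}$, the central element $tI \in \GL_2$ acts on $\bA^6$ by the scalar $t^{5}\cdot t^{-4} = t$, so a polynomial $J_d$ of degree $d$ in the coordinates $A_i$ satisfies $J_d(tv) = t^d J_d(v)$. Since $J_d$ is $\SL_2$-invariant, its $\GL_2$ semi-invariance character factors through $\det$, and matching on the center forces this character to be $\det^{d/2}$. Hence $[Z(J_4)]$ and $[Z(J_{18})]$ correspond to $2$ and $9$ respectively in $\bZ$, and $H = 9 - 8 = 1$.

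Combining the two computations yields $K = -3H$. The argument is essentially character bookkeeping; the only subtle point is verifying that the twist $\det^{-2}$ (rather than $\det^{+2}$) is the one effectively used, so that the center acts on $\bA^6$ by the positive scalar $t$ and $J_d$ therefore carries the positive-weight character $\det^{d/2}$. As a sanity check, the underlying line bundles on $\bP^5$ are $K_{\bP^5} = \cO(-6)$ and $H = \cO(18) - 4\cO(4) = \cO(2)$, consistent with $K = -3H$.
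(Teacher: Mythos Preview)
Your proof is correct and takes a genuinely different route from the paper. The paper computes $K_{[U/\PGL_2]}$ by pulling back a rational $2$-form from the coarse moduli space $\bP(1,2,3)$ (via the standard affine chart) and reading off its order of vanishing along $Z(J_{18})$ and pole along $Z(J_4)$, obtaining $K=[Z(J_{18})]-6[Z(J_4)]=9H-12H=-3H$. You instead work entirely in the presentation $[\bA^6/\GL_2]$ and compute both $K$ and $H$ directly as $\GL_2$-characters via the cotangent sequence for the torsor $\bA^6\to[\bA^6/\GL_2]$. Your approach is more elementary in that it never invokes the coarse moduli space; it also makes explicit the identifications $[Z(J_d)]\leftrightarrow \det^{d/2}$ that the paper uses tacitly (in the step $[Z(J_{18})]-6[Z(J_4)]=9H-12H$, and already in Lemma~\ref{UmodPGL2} when asserting that $H$ generates). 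The paper's approach, on the other hand, makes the link to the geometry of $\bP(1,2,3)$ transparent and would generalize more readily to situations where one knows the coarse moduli space but not a clean linear presentation. Your sanity check on $\bP^5$ is a nice confirmation that the sign conventions are consistent.
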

\begin{proof} 
The coarse moduli space $\bP(1,2,3)$ has a standard affine chart
isomorphic to $\bA^2$ and on it a standard generator of the
canonical bundle on this chart.
This pulls back to a rational section of the canonical bundle of
$[U/\PGL_2]$ vanishing to order 1 along $Z(J_{18})$ and having a pole
of order 6 along $Z(J_4)$.
So we have
\[ K_{[U/\PGL_2]}=[Z(J_{18})]-6[Z(J_4)]=9H-12H=-3H. \]
\end{proof}

\begin{lemm}
\label{M05modS5}
We have
\[
\Pic([ \overline{M}_{0,5} / \fS_5 ])\cong \bZ\oplus \bZ/2\bZ,
\]
where the first summand is the Picard group of $[ U / \PGL_2 ]$ and
the second summand is generated by
$[\partial]-2H$, where
$\partial=[\partial \overline{M}_{0,5} / \fS_5]$ with
$\partial \overline{M}_{0,5}$ denoting the boundary
(complement of $M_{0,5}$) of $\overline{M}_{0,5}$.
\end{lemm}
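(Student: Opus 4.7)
The plan is to identify $\Pic([\overline{M}_{0,5}/\fS_5])$ with the $\fS_5$-equivariant Picard group $\Pic^{\fS_5}(\overline{M}_{0,5})$ and analyse the exact sequence
\[
  0 \to \Hom(\fS_5,\bG_m) \to \Pic^{\fS_5}(\overline{M}_{0,5}) \to \Pic(\overline{M}_{0,5})^{\fS_5}.
\]
Since $\Hom(\fS_5,\bG_m)=\bZ/2\bZ$ is generated by the sign character, it suffices to (a) compute the invariant Picard group, (b) split the sequence via $\pi^*H$ where $\pi\colon[\overline{M}_{0,5}/\fS_5]\to[U/\PGL_2]$ is the contraction, and (c) identify $[\partial]-2H$ as the non-trivial class in the $\bZ/2\bZ$-summand.

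For (a), I would use the identification of $\overline{M}_{0,5}$ with the del Pezzo surface of degree $5$, namely $\Bl_4\bP^2$. Its ten $(-1)$-curves are the four exceptional divisors $E_i$ and the six strict transforms $L_{ij}=h-E_i-E_j$ (with $h$ the hyperplane pullback), and these are precisely the ten boundary divisors, forming a single $\fS_5$-orbit. Their sum yields
\[
  [\partial]=\sum_i E_i+\sum_{i<j}L_{ij} = 6h-2\sum_i E_i = -2K_{\overline{M}_{0,5}}.
\]
The permutation representation of $\fS_5$ on unordered pairs from $\{1,\ldots,5\}$ decomposes by Young's rule as $V_{(5)}+V_{(4,1)}+V_{(3,2)}$ (dimensions $1+4+5=10$), so $\Pic(\overline{M}_{0,5})^{\fS_5}\otimes\bQ$ is one-dimensional; primitivity of $-K_{\overline{M}_{0,5}}$ in $\Pic(\overline{M}_{0,5})$ gives $\Pic(\overline{M}_{0,5})^{\fS_5}=\bZ\cdot(-K_{\overline{M}_{0,5}})$ with $[\partial]=2(-K_{\overline{M}_{0,5}})$.

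For (b), the morphism $\pi$ is an isomorphism on the locus of five distinct points; over a general point of the discriminant locus $\Delta$ the source carries a $\bZ/2$-stabilizer (swapping the two marked points on the tail) while the $\PGL_2$-stabilizer downstairs is trivial, so $\pi$ is a second root stack in codimension one along $\Delta$. The corresponding relative-canonical formula yields
\[
  K_{[\overline{M}_{0,5}/\fS_5]}=\pi^*K_{[U/\PGL_2]}+[\partial].
\]
Combined with $K_{[U/\PGL_2]}=-3H$ from Lemma \ref{lemm:Kclass}, and pulled back to $\overline{M}_{0,5}$ via the \'etale quotient map, this gives $K_{\overline{M}_{0,5}}=-3\pi^*H+[\partial]=-3\pi^*H-2K_{\overline{M}_{0,5}}$, hence $\pi^*H=-K_{\overline{M}_{0,5}}$. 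This coincides with the generator of the invariant Picard group, so the sequence splits and the $\bZ$-summand is precisely $\pi^*\Pic([U/\PGL_2])$.

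For (c), by (b) the class $[\partial]-2\pi^*H$ has zero image in $\Pic(\overline{M}_{0,5})^{\fS_5}$, so it lies in the $\bZ/2\bZ$ kernel. To see it is non-trivial, I would compute the character of a transposition $\tau=(12)\in\fS_5$ at a generic fixed point $\xi\in D_{12}$ of type $(\{1,2\},\{3,4,5\})$: local \'etale coordinates $(u,v)$ on $\overline{M}_{0,5}$, with $u$ the node-smoothing parameter and $v$ the modulus on the main component, satisfy $\tau\cdot(u,v)=(-u,v)$, because the tail involution fixes the node and acts by $-1$ on the tangent line there. Thus $\tau$ acts by $-1$ on $\cO([\partial])|_\xi=N_{D_{12}/\overline{M}_{0,5}}|_\xi$, while $\tau$ acts trivially on $\cO(\pi^*H)|_\xi$ since $\pi^*H$ is pulled back from $[U/\PGL_2]$ and the stabilizer of $\pi(\xi)$ is trivial. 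The resulting character on $[\partial]-2\pi^*H$ is $-1$, namely the sign character. The main obstacle is this character computation: it requires careful tracking of how $\fS_5$-stabilizers map to stabilizers on the target together with the justification that $\pi$ is indeed a second root stack in codimension one along $\Delta$, both of which drive the final identification.
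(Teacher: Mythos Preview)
Your argument is correct and follows a genuinely different route from the paper's. The paper removes the codimension-$2$ loci on both sides, then proves directly that the resulting morphism $[\overline{M}'_{0,5}/\fS_5]\to[U'/\PGL_2]$ \emph{is} the second root stack along $\Delta$ (via the universal property and Zariski's Main Theorem), and reads off the Picard group from Cadman's formula for root stacks together with $[\Delta]=4H$. Your approach instead passes through the equivariant Picard sequence $0\to\Hom(\fS_5,\bG_m)\to\Pic^{\fS_5}(\overline{M}_{0,5})\to\Pic(\overline{M}_{0,5})^{\fS_5}$, exploiting the concrete realization of $\overline{M}_{0,5}$ as the quintic del Pezzo surface to compute the invariant lattice, and then pins down the torsion class by an explicit stabilizer-character calculation at a boundary fixed point. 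What your method buys is transparency: the $\bZ/2\bZ$ is visibly the sign character of $\fS_5$, and the identification $\pi^*H=-K_{\overline{M}_{0,5}}$ is made explicit. What the paper's method buys is economy: once the root-stack identification is in hand, the Picard group and the generator $[\partial]-2H$ drop out of Cadman's description with no further computation. Two small remarks on your write-up: first, the decomposition of the permutation module on pairs only bounds the multiplicity of the trivial representation in the $5$-dimensional quotient $\Pic\otimes\bQ$ by $1$, but since $-K$ is already a nonzero invariant class this suffices; second, your use of the root-stack structure is only through the ramification formula, and you could bypass it entirely by pulling back $[\Delta]=4H$ and using $\pi^*[\Delta]=2[\partial]$ in the torsion-free group $\Pic(\overline{M}_{0,5})$ to get $\pi^*H=-K$ directly.
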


\begin{proof}
As in the proof of Lemma \ref{UmodPGL2} we may remove any codimension $2$
substack without changing the Picard group.
Removing the locus of curves with three irreducible components,
respectively quintic forms with two double roots, the morphism
$[\overline{M}_{0,5}/\fS_5]\to [U/\PGL_2]$ restricts to a morphism
\begin{equation}
\label{MprimeUprime}
[ \overline{M}_{0,5}' / \fS_5 ] \to [ U' / \PGL_2 ],
\end{equation}
such that $\Delta\cap U'$ is a smooth divisor on $U'$.
A construction called \emph{root stack} adds stabilizer along a divisor:
with standard $\bG_m$-action on $\bA^1$ the stack
$[\bA^1/\bG_m]$ is identified with pairs consisting of a line bundle and
a global section, which is determined by an effective Cartier divisor
so we have a morphism $[ U' / \PGL_2 ] \to [\bA^1/\bG_m]$;
then the root stack of interest is
\[ [ U' / \PGL_2 ]\times_{[\bA^1/\bG_m],\theta_2}
[\bA^1/\bG_m] \]
where
\[
\theta_2\colon [\bA^1/\bG_m]\to [\bA^1/\bG_m]
\]
is the morphism induced by squaring on both $\bA^1$ and $\bG_m$;
cf.\ \cite[\S 2]{cadman} and \cite[App.\ B]{AGV}.
Since $\Delta$ acquires multiplicity $2$ upon pullback to
$\overline{M}_{0,5}$, the morphism \eqref{MprimeUprime} factors through
the root stack; the morphism to the root stack is quasi-finite, proper,
and representable, hence by Zariski's Main Theorem is an isomorphism.
The result now follows by the discussion of the
Picard group of a root stack in \cite[\S3.1]{cadman} and the fact that
the $\PGL_2$-equivariant class of $\Delta$ is
$4H\in \Pic([ U / \PGL_2 ])$.
\end{proof}

\section{Moduli stacks of degree 4 Del Pezzo surfaces}
\label{modulistacks}
The moduli stack $\mathcal{M}^{\mathrm{DP4}}$,
of degree 4 Del Pezzo surfaces, where
we allow arbitrary $\mathsf{ADE}$-singularities,
is an Artin stack, smooth, of finite type, with separated quasicompact diagonal, and of
relative dimension 2 over any chosen base ring.

Given a flat family of degree 4 Del Pezzo surfaces $\cX\to S$ over an arbitrary
scheme $S$, standard Hilbert scheme machinery gives rise to a scheme of
nonsingular lines $\cL\to S$.
By the associated deformation theory,
specifically the result \cite[Cor.\ 5.4]{TDIV} and the fact that
nonsingular line $L\subset X$ satisfies
$H^i(L,N_{L/X})=0$ for $i=0$ and $1$,
the morphism $\cL\to S$ is \'etale.
In particular, containing a nonsingular line is an open condition in moduli,
and there is a corresponding moduli stack $\mathcal{M}^\circ$
of degree 4 Del Pezzo surfaces containing a nonsingular line.
This comes with a representable \'etale covering
\begin{equation}
\label{etalecov}
\mathcal{M}'^\circ\to \mathcal{M}^\circ,
\end{equation}
where $\mathcal{M}'^\circ$ is the moduli stack of
degree 4 Del Pezzo surface with choice of nonsingular line.
The stacks $\mathcal{M}^\circ$ and $\mathcal{M}'^\circ$ are also smooth,
of finite type, and of relative dimension 2 over the base ring.

We work over a perfect field $k$ with $\chara k\ne 2$.
By \cite[\S 2.2]{skorobogatov}, a smooth Del Pezzo surface
$X\subset \bP^4$ over $k$
is specified uniquely up to isomorphism
by the singular locus of the pencil of quadric hypersurfaces
$D\subset \bP(H^0(X,\mathcal{I}_X(2)))$ up to projective equivalence
together with an isomorphism class of
$k$-torsors under the group scheme $R_{k[D]/k}(\mu_2)/\mu_2$,
where $R_{k[D]/k}$ denotes restriction of scalars.
This description extends naturally to
singular degree 4 Del Pezzo surfaces containing a nonsingular line
(Theorem \ref{thm.stackthm} and Corollary \ref{cor.simplytransitive}).

\begin{theo}
\label{thm.stackthm}
The blow-up of the projective plane along the zero locus of a
binary quintic form on the Veronese-embedded projective line yields an
isomorphism of stacks
\begin{equation}
\label{stackiso}
[ \bP^5 / \PGL_2 ]\eqto \mathcal{M}'^\circ
\end{equation}
with inverse isomorphism given by the spectral cover construction
(Section \ref{spectralcover}).
The spectral cover morphism
\begin{equation}
\label{eqn.MtoP5modPGL2}
\mathcal{M}^\circ \to [ \bP^5 / \PGL_2 ]
\end{equation}
is an \'etale gerbe (an \'etale surjective morphism with \'etale surjective
relative diagonal) which is neutral, i.e., admits a section; a section is
the composite of \eqref{etalecov} and \eqref{stackiso}.
\end{theo}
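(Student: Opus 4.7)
The plan is to realize \eqref{stackiso} as the families version of Theorem \ref{thm.DP4asblowup}, and then to use the étale cover \eqref{etalecov} to recognize the spectral cover morphism \eqref{eqn.MtoP5modPGL2} as an étale gerbe with an explicit section. I would build the forward map $\Psi\colon [\bP^5/\PGL_2] \to \mathcal{M}'^\circ$ by performing the blow-up construction universally. Over $\bP^5$, the Veronese embedding $\bP^1 \hookrightarrow \bP^2$ realizes $\bP^1$ as a smooth conic $B$, and the universal binary quintic form defines a finite flat family of length $5$ subschemes $\Xi_{\mathrm{univ}} \subset \bP^5 \times B$. The blow-up $\Bl_{\Xi_{\mathrm{univ}}}(\bP^5 \times \bP^2)$, together with the strict transform of $\bP^5 \times B$ (fiberwise a $(-1)$-curve since $B \cdot B - \deg \Xi = -1$), is a flat family in $\mathcal{M}'^\circ$, and the construction is $\PGL_2$-equivariant under the standard action on $\bP^5$ and the Veronese-induced action $\PGL_2 \hookrightarrow \PGL_3$ on $\bP^2$, hence descends to $\Psi$.

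For the inverse $\Phi\colon \mathcal{M}'^\circ \to [\bP^5/\PGL_2]$, I would apply the spectral cover of Section \ref{spectralcover}. Given $(\pi\colon \mathcal{X} \to T, \mathcal{L} \subset \mathcal{X})$, the spectral cover $D \hookrightarrow \bP(\pi_*\mathcal{I}_\mathcal{X}(2))$ becomes canonically a degree $5$ relative divisor inside the $\bP^1$-bundle $\mathcal{L}/T$ by the families analog of \eqref{eqn.isopenciltoL}; such data is exactly a morphism $T \to [\bP^5/\PGL_2]$. Applying Theorem \ref{thm.DP4asblowup} fiberwise yields $\Psi \circ \Phi \cong \mathrm{id}$ and $\Phi \circ \Psi \cong \mathrm{id}$: from $(X,L)$, $\Phi$ extracts the spectral cover as a length-$5$ subscheme $\Xi_L \subset L$, and $\Psi$ reconstructs $(X,L)$ as the blow-up of $\bP^2$ along the Veronese image of $\Xi_L$, with $L$ the strict transform of $B$; conversely, the spectral cover of $\Bl_{V(f)}(\bP^2)$ recovers $V(f) \subset B \cong L$.

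For the gerbe statement, set $s := p \circ \Psi$, where $p\colon \mathcal{M}'^\circ \to \mathcal{M}^\circ$ is the étale covering \eqref{etalecov}. Since the spectral cover of a Del Pezzo surface is intrinsic (independent of the choice of line), $q \circ p = \Phi$, hence $q \circ s = \Phi \circ \Psi = \mathrm{id}$, exhibiting $s$ as a section of $q$. As $s$ is the composite of an isomorphism and the representable étale surjective $p$, it is itself representable étale surjective; étale descent (étaleness is fpqc local on the source) applied to the identity $q \circ s = \mathrm{id}$ then forces $q$ to be étale. The remaining diagonal condition follows from Theorem \ref{thm.DP4asblowup}: the spectral cover uniquely determines the Del Pezzo surface up to isomorphism, so each geometric fiber of $q$ is equivalent to $B\Aut(X)$ for the corresponding $X$, and $\Aut(X)$ is finite étale (injecting into $W(\sD_5)$ via its faithful action on the Picard lattice). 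The section $s$ exhibits the gerbe as neutral.

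The principal difficulty is the families analog of \eqref{eqn.isopenciltoL} needed for $\Phi$; this is essentially already contained in the proof of Theorem \ref{thm.DP4asblowup}. Granted that, the rest is formal: packaging the pointwise isomorphism of Theorem \ref{thm.DP4asblowup} into a stack isomorphism and reading off the étale gerbe structure from the section $s$.
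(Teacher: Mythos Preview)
Your approach is correct and essentially matches the paper's proof: both construct \eqref{stackiso} via the universal blow-up, invoke the relative version of Theorem \ref{thm.DP4asblowup} for the inverse, and deduce \'etale surjectivity of \eqref{eqn.MtoP5modPGL2} from the existence of the section $s = p \circ \Psi$. The one place where the paper is slightly more direct is the relative diagonal: rather than appealing to the structure of geometric fibers, the paper checks that two families over $T$ with common spectral cover are \'etale-locally isomorphic by choosing nonsingular lines \'etale-locally on $T$ (via \eqref{etalecov}) and then applying the isomorphism \eqref{stackiso} to identify both with the same blow-up---this is exactly the families upgrade of your pointwise statement ``the spectral cover uniquely determines the Del Pezzo surface up to isomorphism,'' and you should phrase it that way to make the surjectivity of $\Delta_q$ transparent for families rather than just over geometric points.
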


\begin{proof}
That we have the morphism \eqref{stackiso} is clear.
As remarked in the proof of Theorem \ref{thm.DP4asblowup},
the treatment given in Section \ref{sec.nonsingline} can be carried out
in a relative setting over a smooth $k$-scheme, so by
Theorem \ref{thm.DP4asblowup} in a relative setting we have
the isomorphism as claimed.
The claim about the section to the morphism \eqref{eqn.MtoP5modPGL2} is clear,
and implies that the morphism \eqref{eqn.MtoP5modPGL2} is \'etale surjective.
It remains, therefore, only to show that the relative diagonal is
\'etale surjective, i.e., that two families in $\mathcal{M}^\circ$ having
a common spectral cover are locally isomorphic.
We can \'etale locally make choices of nonsingular lines in the fibers, and
then we apply the isomorphism \eqref{stackiso} to establish the assertion.
\end{proof}

\begin{coro}
\label{cor.simplytransitive}
Given a Del Pezzo surface $X\subset \bP^4$ of degree $4$ over $k$ the set of nonsingular lines
defined over $k$ is either empty or is acted upon simply transitively
by the kernel of $\Aut(X)\to \PGL(H^0(X,{\mathcal I}_X(2)))$.
\end{coro}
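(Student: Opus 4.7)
The plan is to reduce the statement to a nearly trivial one about the stack $[\bP^5/\PGL_2]$ via the isomorphism of Theorem \ref{thm.stackthm}.

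The key input is the $\Aut_k(X)$-equivariance of the canonical identification \eqref{eqn.isopenciltoL}: for a nonsingular line $L = \bP(V)\subset X$, the isomorphism
\[ \iota_L\colon L \stackrel{\sim}{\to} \bP(H^0(X, \mathcal{I}_X(2))), \qquad p\mapsto Q_p, \]
sending $p$ to the pencil member $Q_p \supset X$ with $T_p X = \bP(V^\perp)$, satisfies $\iota_{\phi(L)}\circ \phi|_L = \phi_* \circ \iota_L$ for every $\phi\in\Aut_k(X)$, where $\phi_*$ denotes the image of $\phi$ in $\PGL(H^0(X,\mathcal{I}_X(2)))$. This follows at once from the inclusion $T_{\phi(p)}X = \phi_*(T_pX)\subset\phi_*(Q_p)$, which forces $Q_{\phi(p)}=\phi_*(Q_p)$. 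Consequently, whenever $\phi(L) = L'$, one has $\phi\in K$ if and only if $\phi|_L$ coincides with the canonical identification $\alpha_{L,L'} := \iota_{L'}^{-1}\circ\iota_L$.

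For transitivity, let $L$ and $L'$ be $k$-rational nonsingular lines on $X$. By Theorem \ref{thm.DP4asblowup} the subschemes $\Xi_L\subset L$ and $\Xi_{L'}\subset L'$ are both matched with the scheme of singular pencil members in $\bP(H^0(X,\mathcal{I}_X(2)))$, so $\alpha_{L,L'}$ carries $\Xi_L$ to $\Xi_{L'}$. Thus $\alpha_{L,L'}$ defines an isomorphism in $[\bP^5/\PGL_2](k)$ between the objects corresponding under Theorem \ref{thm.stackthm} to $(X,L)$ and $(X,L')$. Lifting through the stack equivalence produces $\phi\in\Aut_k(X)$ with $\phi(L)=L'$ and $\phi|_L=\alpha_{L,L'}$, and by the equivariance $\phi\in K$.

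For freeness, suppose $\phi\in K$ satisfies $\phi(L)=L$. The equivariance yields $\phi|_L=\mathrm{id}_L$. Under the stack equivalence of Theorem \ref{thm.stackthm}, $\phi$ is an automorphism of $(X,L)$ corresponding to an element of $\Stab_{\PGL_2}(f)\subset \PGL_2 = \Aut_k(L)$ identified with $\phi|_L$, hence trivial; therefore $\phi = \mathrm{id}_X$. The main obstacle is the equivariance of $\iota_L$ above, which requires matching the intrinsic $\Aut_k(X)$-action on the pencil with the action induced by restriction via the tangent-plane parametrization \eqref{eqn.isopenciltoL}; once this is in hand, everything else is formal from the stack equivalence.
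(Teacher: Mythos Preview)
Your proof is correct and is precisely the argument the paper intends: the corollary is stated in the paper with no explicit proof, as an immediate consequence of Theorem~\ref{thm.stackthm}, and your write-up unpacks that deduction. The one substantive point you make explicit---the $\Aut_k(X)$-equivariance of the identification $\iota_L$ of \eqref{eqn.isopenciltoL}---is exactly what is needed to see that under the equivalence $\mathcal{M}'^\circ\cong[\bP^5/\PGL_2]$ the induced map on morphism sets $\Hom_{\mathcal{M}'^\circ}((X,L),(X,L'))\to \Hom_{[\bP^5/\PGL_2]}(G(X,L),G(X,L'))$ is $\phi\mapsto\phi_*$ (equivalently $\phi\mapsto\phi|_L$), from which both transitivity and freeness of the $K$-action drop out.
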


In the stack of binary quintic forms there is the open substack
$[ U / \PGL_2 ]$ treated in Section \ref{quintic}, consisting of
binary quintic forms with at most double roots.
We let $\mathcal{M}$ denote the corresponding open substack of
$\mathcal{M}^\circ$, under the morphism \eqref{eqn.MtoP5modPGL2}.
Concretely, $\mathcal{M}$ is the moduli stack of degree 4 Del Pezzo surfaces
with at most one $\sA_1$-singularity, or with two $\sA_1$-singularities connected
by a line.
The morphism \eqref{eqn.MtoP5modPGL2} restricts to
\begin{equation}
\label{eqn.MtoUmodPGL2}
\mathcal{M}\to [ U / \PGL_2 ],
\end{equation}
also a neutral \'etale gerbe.
However, even though $[ U / \PGL_2 ]$ is separated (and, in fact, is
proper), the stack $\mathcal{M}$ is nonseparated, since
as remarked in the
Introduction (see also \cite[Rem.\ 2]{HT13}),
the order of the geometric stabilizer group in a family may
decrease under specialization.

If we let $U^\circ$ denote the open subset of $U$ with
nonvanishing discriminant and $\mathcal{M}^\mathrm{sm}$ denote the
moduli stack of smooth degree 4 Del Pezzo surfaces, then
\eqref{eqn.MtoUmodPGL2} restricts to a neutral \'etale gerbe
\begin{equation}
\label{eqn.MsmtoUcircmodPGL2}
\mathcal{M}^{\mathrm{sm}}\to [ U^\circ / \PGL_2 ].
\end{equation}

\begin{rema}
\label{badcoarsemodulispace}
The morphism $\mathcal{M}\to \bP(1,2,3)$ is
one-to-one on geometric points by \cite[Prop.\ 1]{HT13}.
By combining a standard property of gerbes \cite[Lem.\ 3.8]{LMB} with
the fact that $[U/\PGL_2]\to \bP(1,2,3)$ is a coarse moduli space,
we deduce that the morphism $\mathcal{M}\to \bP(1,2,3)$ is also universal
for morphisms to algebraic spaces.
However, $\mathcal{M}$ is nonseparated,
the property of being \'etale locally on a coarse moduli space a
quotient of a scheme by a finite group
(a standard property for separated Deligne-Mumford stacks and more generally
for Deligne-Mumford stacks with finite stabilizer \cite{keelmori}) fails to
hold for the stack $\mathcal{M}$.
\end{rema}

\begin{coro}
\label{gerbecor}
The singular locus of the total space of the universal family over
\[
[ \overline{M}_{0,5} / \fS_5 ] \times _{ [ U / \PGL_2 ]}
\mathcal{M}
\]
consists of ordinary double points in the fibers over
the locus of singular degree 4 Del Pezzo surfaces.
\end{coro}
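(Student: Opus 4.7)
Plan. The argument reduces to an \'etale-local computation. By Lemma \ref{M05modS5} and its proof, after excising a closed substack of codimension at least two the morphism $[\overline{M}_{0,5}/\fS_5]\to [U/\PGL_2]$ is the order-two root stack along the discriminant $\Delta$. Composing with the \'etale gerbe $\mathcal{M}\to [U/\PGL_2]$ of \eqref{eqn.MtoUmodPGL2} shows that, \'etale-locally on $\mathcal{M}$, the fiber product
\[
[\overline{M}_{0,5}/\fS_5]\times_{[U/\PGL_2]}\mathcal{M}
\]
is the order-two root stack of $\mathcal{M}$ along the divisor $\Delta_{\mathcal{M}}$ parametrizing singular degree $4$ Del Pezzo surfaces.

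Next I will write down a local normal form for the universal family $\mathcal{X}\to \mathcal{M}$ near an $\sA_1$-singular fiber and then base-change it. Since $\{xy-z^2=t\}\to \Spec k[t]$ is a miniversal deformation of the surface $\sA_1$-singularity with smooth total space, and $\mathcal{M}$ is smooth of relative dimension two, there exist \'etale-local coordinates $(u,t)$ on $\mathcal{M}$, with $t$ cutting out $\Delta_{\mathcal{M}}$ transversally, in which the universal family near the singular point of the fiber has equation $xy-z^2=t$; the Jacobian criterion then confirms that $\mathcal{X}$ is smooth there. A smooth atlas of the root stack is $\Spec k[u,s']$, with $t=s'^2$, so the pulled-back universal family on this atlas is $\{xy-z^2-s'^2=0\}\subset \bA^5$. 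By the Jacobian criterion its singular locus is $\{x=y=z=s'=0\}$: a one-dimensional subscheme meeting each fiber over $\Delta_{\mathcal{M}}$ in exactly one point, the preimage of the $\sA_1$-singular point of the corresponding Del Pezzo surface. Restricted to that fiber, the equation reduces to $xy-z^2=0$, a nondegenerate surface quadric cone, so each such point is an ordinary double point of its fiber.

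Away from $\Delta_{\mathcal{M}}$ the pulled-back family remains smooth as a flat base change of a smooth morphism, and the codimension-two stratum of Del Pezzo surfaces with two $\sA_1$-singularities is handled by applying the same local analysis independently at each node. The principal technical point is the existence of the local normal form $xy-z^2=t$ for the universal family in \'etale coordinates adapted to $\Delta_{\mathcal{M}}$; once this is in hand, the square-root base change and a routine application of the Jacobian criterion deliver both the location and the ordinary-double-point character of the singularities.
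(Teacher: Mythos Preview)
Your argument is correct and is precisely what the paper has in mind: the corollary is stated without proof, as an immediate consequence of the root-stack description established in the proof of Lemma~\ref{M05modS5}, the \'etale gerbe \eqref{eqn.MtoUmodPGL2}, and the standard miniversal deformation $xy-z^2=t$ of an $\sA_1$-singularity, which is exactly what you unpack. One small remark: over the codimension-two locus the root-stack description from Lemma~\ref{M05modS5} is not literally available, but the map $[\overline{M}_{0,5}/\fS_5]\to [U/\PGL_2]$ there is \'etale-locally $(s_1,s_2)\mapsto (s_1^2,s_2^2)$, so your local model and conclusion go through unchanged.
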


\begin{prop}
\label{prop:picardM}
The morphism \eqref{eqn.MtoP5modPGL2} and \eqref{eqn.MtoUmodPGL2}
induce isomorphisms on Picard groups.
In particular, we have $\Pic(\mathcal{M}^{\mathrm{DP4}})\cong
\Pic(\mathcal M)\cong \Z$.
\end{prop}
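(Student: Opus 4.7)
The plan is to exploit the neutral \'etale gerbe structure from Theorem \ref{thm.stackthm}. For any neutral \'etale gerbe $\pi\colon\mathcal{X}\to Y$ with section $\sigma$, the pullback $\pi^*\colon\Pic(Y)\to\Pic(\mathcal{X})$ is split injective with retraction $\sigma^*$, and the kernel of $\sigma^*$ is canonically identified with $H^0(Y,\underline{\Hom}(\mathcal{G},\bG_m))$, where $\mathcal{G}$ is the inertia band. Combined with Lemma \ref{UmodPGL2}, which gives $\Pic([\bP^5/\PGL_2])\cong\Pic([U/\PGL_2])\cong\bZ$, this reduces the first two claimed isomorphisms to showing that $\mathcal{G}$ has no nontrivial global character on either base stack.

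By Corollary \ref{cor.simplytransitive}, the fiber of $\mathcal{G}$ at a Del Pezzo surface $X$ is the kernel of $\Aut(X)\to\PGL(H^0(X,\mathcal{I}_X(2)))$, acting simply transitively on the nonsingular lines of $X$; over the smooth locus this is $(\bZ/2\bZ)^4$, with character group $(\bZ/2\bZ)^4$. To rule out monodromy-invariant nontrivial characters, I would invoke the standard fact that the monodromy on the primitive Picard lattice of a generic smooth DP4 is the full Weyl group $W(\sD_5)$; the induced action on the band $\mathcal{G}$ has no nonzero invariant in the $\bF_2$-representation dual to the $16$ lines. This yields $\Pic(\mathcal{M}^\circ)\cong\bZ$, and restricting to the open substack corresponding to $U$ gives $\Pic(\mathcal{M})\cong\bZ$.

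The remaining isomorphism $\Pic(\mathcal{M}^{\mathrm{DP4}})\cong\Pic(\mathcal{M})$ follows from purity for Picard groups on smooth stacks: the complement $\mathcal{M}^{\mathrm{DP4}}\setminus\mathcal{M}$ has codimension at least $2$ in the smooth Artin stack $\mathcal{M}^{\mathrm{DP4}}$, parametrizing DP4 surfaces whose associated binary quintic has a triple (or higher) root---singularity types worse than $\sA_1$---or which lack a nonsingular line altogether.

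The principal obstacle is the monodromy step. The band $\mathcal{G}$ is easy to describe fiberwise, but its order jumps across the discriminant strata (from $16$ on smooth DP4 down to $8$ on the generic $\sA_1$-locus), so the vanishing of global characters must be established uniformly, requiring both the generic Weyl-group monodromy and control of specialization at the boundary. Once this vanishing is in hand, the remainder of the proof is a routine assembly of the neutral gerbe decomposition with purity.
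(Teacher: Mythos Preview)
Your approach is essentially the same as the paper's. Both exploit the neutral \'etale gerbe structure of Theorem \ref{thm.stackthm}: the paper phrases the resulting decomposition via the Leray spectral sequence
\[
0\to \Pic([\bP^5/\PGL_2])\to \Pic(\mathcal{M}^\circ)\to \Hom(\Aut(\beta),\bG_m)\to 0,
\]
which is your split exact sequence with $\mathcal{G}=\Aut(\beta)$, and both reduce to showing the character group has no global sections. The paper handles what you call the ``principal obstacle'' in one stroke: it observes that the sheaf $\mathcal{H}om(\Aut(\beta),\bG_m)$ already has no nontrivial sections over the dense open $[U^\circ/\PGL_2]$ (your $W(\sD_5)$-monodromy argument), and then notes that because the stacks involved have \emph{separated diagonal}, a global section vanishing on a dense open vanishes identically. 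This separatedness remark is the clean replacement for your ``control of specialization at the boundary''; you should invoke it rather than leave the extension as an unresolved difficulty. Your purity argument for passing between $\mathcal{M}^{\mathrm{DP4}}$, $\mathcal{M}^\circ$, and $\mathcal{M}$ is correct and makes explicit a step the paper leaves implicit.
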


\begin{proof}
Let
\[ \beta\colon
[\bP^5/\PGL_2]\to \mathcal{M}^\circ
\]
denote the composite of \eqref{etalecov} and \eqref{stackiso},
mentioned in the statement of Theorem \ref{thm.stackthm}.
The Leray spectral sequence gives an exact sequence
\[0\to \Pic([\bP^5/\PGL_2])\to \Pic(\mathcal{M}^\circ)\to
\Hom(\Aut(\beta),\bG_m)\to 0.\]
Already the restriction of the sheaf
$\mathcal{H}om(\Aut(s),\bG_m)$ to $[U^\circ/\PGL_2]$ has no nontrivial sections.
Since the stacks in question all have separated diagonal, this is enough
to deduce the vanishing of $\Hom(\Aut(\beta),\bG_m)$.
We conclude by appealing to Lemma \ref{UmodPGL2}.
\end{proof}

We call attention to the restriction of $\beta$ to $[U/\PGL_2]$,
a section
\begin{equation}
\label{eqn.scnofMtoUmodPGL2}
[U/\PGL_2]\to \mathcal{M}
\end{equation}
of the gerbe \eqref{eqn.MtoUmodPGL2}, and to $[U^\circ/PGL_2]$, a section
\begin{equation}
\label{eqn.scnofMsmtoUcircmodPGL2}
[U^\circ/\PGL_2]\to \mathcal{M}^\mathrm{sm}
\end{equation}
of the gerbe \eqref{eqn.MsmtoUcircmodPGL2}.

\begin{coro}
\label{coro:numbericalinvariant}
For a flat family $\pi\colon \cX\to \bP^1$ of degree 4 Del Pezzo surfaces with 
$\mathsf{ADE}$-singularities, we have
\begin{equation}
\label{degequalsdeg}
\deg(\pi_*\omega_\pi^{-1})=\deg(\pi_*(\cI_{\cX}(2))),
\end{equation}
with the notation of Section \ref{spectralcover}.
\end{coro}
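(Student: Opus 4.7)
My plan is to reduce the identity \eqref{degequalsdeg} to a single explicit family, using $\Pic(\mathcal{M}^{\mathrm{DP4}})\cong\bZ$ from Proposition~\ref{prop:picardM}.

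The assignments $\pi\mapsto\det\pi_*\omega_\pi^{-1}$ and $\pi\mapsto\det\pi_*\cI_\cX(2)$ are functorial in the family (both $\pi_*\omega_\pi^{-1}$ and $\pi_*\cI_\cX(2)$ are locally free of constant rank, by vanishing of higher cohomology on fibers), and so define line bundles $\Lambda_1,\Lambda_2\in\Pic(\mathcal{M}^{\mathrm{DP4}})$. Writing $\Lambda_i=a_iH$ for integers $a_i$ and a generator $H$, any family $\pi\colon\cX\to\bP^1$ with classifying map $f\colon\bP^1\to\mathcal{M}^{\mathrm{DP4}}$ satisfies
\[
\deg\pi_*\omega_\pi^{-1}-\deg\pi_*\cI_\cX(2)=(a_1-a_2)\deg f^*H.
\]
It is therefore enough to exhibit one family for which $\deg f^*H\ne 0$ and the left-hand side vanishes.

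For this I would take $\cX\subset\bP^4\times\bP^1$ cut out by $Q_1=0$ and $sQ_2+tQ_3=0$, where $Q_1,Q_2,Q_3$ are generic quadrics on $\bP^4$ and $[s:t]\in\bP^1$. By Bertini, $\cX$ is a smooth threefold with general fiber a smooth degree-$4$ Del Pezzo, and the discriminant quintic in $[s:t]$ is nonconstant, so $\deg f^*H\ne 0$. Adjunction on $\bP^4\times\bP^1$ gives $\omega_\pi\cong\cO(-1,1)|_\cX$. The crucial bookkeeping is that under the trivialization $\bP((\pi_*\omega_\pi^{-1})^\vee)\cong\bP^4\times\bP^1$ (obtained because $\pi_*\omega_\pi^{-1}\cong\cO(-1)^{\oplus 5}$), the tautological line bundle $\cO_{\bP((\pi_*\omega_\pi^{-1})^\vee)}(1)$ corresponds to $\cO(1,-1)$ rather than $\cO(1,0)$, so $\cI_\cX(2)=\cI_\cX\otimes\cO(2,-2)$. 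Pushing the Koszul resolution of $\cI_\cX$ in $\bP^4\times\bP^1$ forward by $\pi$, and using $H^i(\bP^4,\cO(n))=0$ for $-4\le n\le -1$ and all $i$, yields
\[
\pi_*\omega_\pi^{-1}\cong\cO_{\bP^1}(-1)^{\oplus 5},\qquad \pi_*\cI_\cX(2)\cong\cO_{\bP^1}(-2)\oplus\cO_{\bP^1}(-3),
\]
both of degree $-5$. This verifies the identity for this family, and by the moduli-theoretic reduction above it holds for every family.

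The main obstacle is the twist bookkeeping just described: without the correct identification of $\cO_{\bP((\pi_*\omega_\pi^{-1})^\vee)}(1)$ as $\cO(1,-1)$ in the trivialization, the Koszul computation would give an inconsistent answer, since otherwise the fiberwise injection $B\hookrightarrow\Sym^2 A$ could not be a subbundle inclusion. Once this twist is in place, the remaining cohomological computation is routine.
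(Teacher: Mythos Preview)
Your proof is correct and follows the same strategy as the paper: use $\Pic(\mathcal{M}^{\mathrm{DP4}})\cong\bZ$ to reduce to a single nontrivial example, then verify the identity there. The paper simply cites an example from \cite{HT13} (with $\pi_*\omega_\pi^{-1}\cong\cO_{\bP^1}(-2n)^5$ and $\pi_*(\cI_{\cX}(2))\cong\cO_{\bP^1}(-5n)^2$), whereas you construct your own example---in fact the very height-$10$ family that appears later in Section~\ref{sect:low-height}---and carry out the Koszul computation in detail, including the correct identification of $\cO_{\bP((\pi_*\omega_\pi^{-1})^\vee)}(1)$ with $\cO(1,-1)$ under the trivialization.
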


\begin{proof}
By Proposition \ref{prop:picardM}, the degrees in
\eqref{degequalsdeg} must be related by a
constant proportionality.
We deduce their equality from
any of the worked out examples,
e.g., Case 1 on page 11 of \cite{HT13} with
$\pi_*\omega_\pi^{-1}\cong \cO_{\bP^1}(-2n)^5$
and $\pi_*(\cI_{\cX}(2))\cong \cO_{\bP^1}(-5n)^2$.
\end{proof}

\begin{defi}
\label{height}
The height of a
flat family $\pi\colon \cX\to \bP^1$ of degree 4 Del Pezzo surfaces with 
$\mathsf{ADE}$-singularities is the quantity
\[
h(\cX)=-2\deg(\pi_*\omega_\pi^{-1})=-2\deg(\pi_*(\cI_{\cX}(2))).
\]
\end{defi}

The constant 2 in the definition of height is a convention.
This height agrees with the height defined in \cite[\S 3]{HT13}; see
Appendix.

\begin{lemm}
\label{lem:hirzebruchsurface}
Let $\F_n$ be the Hirzebruch surface, with Picard group
generated by $(-n)$-curve $\xi$ and fiber $f$.
If $D\subset \F_n$ is a reduced divisor in
class $[D]=af+b\xi$ then $(b-1)n \le a$.
If $D\subset \F_n$ is irreducible then either $a\ge bn$ or $a=0$.
\end{lemm}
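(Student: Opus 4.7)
My plan is to handle the irreducible assertion first via an intersection-number computation on $\F_n$, then bootstrap to the reduced case by separating off a possible $\xi$-component.

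For the irreducible part, the support of $D$ is a single reduced irreducible curve $C$, and $D = mC$ for some $m \ge 1$. Since both the inequality $a \ge bn$ and the alternative $a = 0$ are homogeneous in $m$, I may assume $D$ is already reduced and irreducible. If $D = \xi$ then $a = 0$ and we are done. Otherwise $D$ and $\xi$ are distinct prime divisors on the smooth surface $\F_n$, so their intersection number satisfies $D \cdot \xi \ge 0$; combined with the standard intersection data $f^2 = 0$, $f \cdot \xi = 1$, $\xi^2 = -n$, this reads $a - bn \ge 0$, i.e., $a \ge bn$.

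For the reduced case, I decompose $D = \sum_i D_i$ into its distinct prime divisor components, each of multiplicity one, and apply the irreducible assertion to each: writing $[D_i] = a_i f + b_i \xi$, either $a_i \ge b_i n$, or else $D_i = \xi$ (in which case $a_i = 0$ and $b_i = 1$). The second alternative can occur for at most one index, because $D$ is reduced and $\xi$ is a single prime divisor. Summing the resulting inequalities yields $a = \sum_i a_i \ge (b - \varepsilon) n \ge (b-1) n$, where $\varepsilon \in \{0, 1\}$ records whether $\xi$ itself appears among the components of $D$.

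There is no significant obstacle; the argument is essentially bookkeeping on top of the one geometric input that $\xi$ is the unique irreducible curve on $\F_n$ of negative self-intersection, which is precisely what produces the alternative $a = 0$ in the irreducible statement. The only point requiring slight care is the possibly non-reduced irreducible case, handled at the outset by passing to the underlying reduced support.
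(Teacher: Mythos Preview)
Your proof is correct. For the irreducible assertion, your argument---intersect with $\xi$ and observe that $D\cdot\xi\ge 0$ unless $D$ is supported on $\xi$---is exactly the paper's. For the reduced assertion, the paper simply cites a result of Laface, whereas you give a self-contained argument by decomposing $D$ into its prime components, applying the irreducible case to each, and summing, noting that $\xi$ can appear at most once. Your route is more elementary and avoids the external reference; the only extra care needed (which you handle) is that the irreducible case actually yields the sharper dichotomy ``$D_i=\xi$ or $a_i\ge b_i n$'' rather than merely ``$a_i=0$ or $a_i\ge b_i n$'', so that the defect in the sum is at most $n$.
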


\begin{proof}
The first assertion is a consequence of Proposition 2.2 of \cite{laface}.
The second encodes the fact that 
$D\cdot \xi\ge 0$ unless $D$ contains a multiple
of $\xi$.  
\end{proof}

For $d\le e$, on the Hirzebruch surface
$\bP(\cO_{\bP^1}(d)\oplus \cO_{\bP^1}(e))\cong \F_n$ with $n=e-d$ we have
$c_1(\cO_{\bP(\cO(d)\oplus \cO(e))}(1))=-df+\xi$.

\begin{lemm}
\label{lem:easyintersectiontheory}
Let $E$ be a vector bundle on a scheme $B$,
let $\pi:\bP(E^\vee)\to B$ be the projectivization of the dual of $E$,
let $F$ be a vector bundle of rank $f$ on $B$, and let
$F\to \Sym^d(E)$ for some $d\ge 1$ be given, defining
$\cX\subset \bP(E^\vee)$
(locally by $f$ homogeneous equations of degree $d$).
Assume that the fibers of $\cX\to B$ are of codimension $f$.
Then
\[
[\cX]=c_f(\cO_{\bP(E^\vee)}(d)\otimes \pi^*F^\vee)
\]
in the Chow group of $\bP(E^\vee)$.
\end{lemm}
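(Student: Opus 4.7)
The plan is to recognize $\cX$ as the zero scheme of a regular section of a rank-$f$ vector bundle on $\bP(E^\vee)$, after which the stated formula is just the standard expression of such a zero scheme via the top Chern class.

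First, I would reinterpret the datum $F\to \Sym^d E$ as a section of the bundle
\[
V := \cO_{\bP(E^\vee)}(d)\otimes \pi^* F^\vee.
\]
With the convention on $\bP(E^\vee)$ used in the paper (so that $\pi_* \cO_{\bP(E^\vee)}(d) = \Sym^d E$), the projection formula gives $\pi_* V = \Sym^d E \otimes F^\vee$, whose global sections are canonically identified with $\Hom(F,\Sym^d E)$. Let $s \in \Gamma(\bP(E^\vee),V)$ be the section corresponding to the given homomorphism.

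Next, I would verify that $\cX = Z(s)$ as subschemes of $\bP(E^\vee)$. This is essentially tautological: over any open $U\subset B$ on which both $E$ and $F$ are trivial, a choice of frame $e_1,\dots,e_f$ of $F|_U$ sends the given map to an $f$-tuple $\sigma_1,\dots,\sigma_f \in \Sym^d E|_U$. By hypothesis these are the $f$ homogeneous equations of degree $d$ cutting out $\cX\cap \pi^{-1}(U)$, and they are simultaneously the components of $s$ in the induced trivialization of $V|_{\pi^{-1}(U)}$; hence the two subschemes coincide.

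Finally, the assumption that the fibers of $\cX\to B$ have codimension $f$ translates into $Z(s)$ having codimension equal to $\mathrm{rk}(V)=f$ in $\bP(E^\vee)$, i.e., $s$ is a regular section. The standard formula for the zero scheme of a regular section of a vector bundle (see e.g.\ Fulton, \emph{Intersection Theory}, \S14.1) then yields
\[
[\cX]=[Z(s)]=c_f(V)=c_f\bigl(\cO_{\bP(E^\vee)}(d)\otimes \pi^* F^\vee\bigr)
\]
in the Chow group of $\bP(E^\vee)$. There is no serious obstacle here; the only point requiring care is the bookkeeping of the $\bP(E^\vee)$-convention that makes $\pi_*\cO(d)=\Sym^d E$ (so that ``equations of degree $d$'' really do correspond to sections of $V$), after which everything is a direct application of standard intersection theory.
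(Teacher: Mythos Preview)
Your proposal is correct and follows essentially the same approach as the paper: both identify the given homomorphism $F\to \Sym^d E=\pi_*\cO_{\bP(E^\vee)}(d)$ with a section of $\cO_{\bP(E^\vee)}(d)\otimes\pi^*F^\vee$ whose zero scheme is $\cX$, and then invoke the top Chern class formula. The paper's proof is simply more terse (it obtains the section via the adjunction $\pi^*F\to\cO_{\bP(E^\vee)}(d)$ rather than via $\pi_*$, and leaves the regularity and Chern class step implicit), while you spell out the local verification and the regularity hypothesis more carefully.
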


\begin{proof}
From $F\to \Sym^d(E)=\pi_*\cO_{\bP(E^\vee)}(d)$ we get
$\pi^*F\to \cO_{\bP(E^\vee)}(d)$ and hence a global section of
$\cO_{\bP(E^\vee)}(d)\otimes \pi^*F^\vee$,
whose vanishing defines $\cX\subset \bP(E^\vee)$.
\end{proof}

We will consider flat families of degree 4 Del Pezzo surfaces over $\bP^1$ with
smooth general fiber.
The geometric fibers over closed points of $\bP^1$ are allowed to have
arbitrary $\mathsf{ADE}$-singularities.
Let $h=h(\cX)$ be the height of such a family
$\pi\colon \cX\to \bP^1$.

\begin{prop}
\label{discriminantdegree}
A generically smooth family $\pi\colon \cX\to \bP^1$
of degree 4 Del Pezzo surfaces
of height $h=h(\cX)$ has discriminant divisor $\Delta(\pi)\subset \bP^1$
of degree $2h$.
\end{prop}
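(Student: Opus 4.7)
The plan is to identify the discriminant divisor $\Delta(\pi)$ with the zero scheme of the classical discriminant of the binary quintic form $f = \det(uP+vQ)$ attached to the family, and then to compute the degree on $\bP^1$ of the line bundle of which this discriminant is a global section. This reduces the problem to a weight computation on $\GL_2$-representations.

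First, set $E := \pi_*\omega_\pi^{-1}$ and $F := \pi_*\cI_{\cX}(2)$, vector bundles on $\bP^1$ of ranks $5$ and $2$ respectively, both of degree $-h/2$ by Corollary \ref{coro:numbericalinvariant} and Definition \ref{height}. The composition \eqref{eqn:composition} exhibits $f$ as a global section
\[ f \in H^0\bigl(\bP^1,\, \Sym^5 F^\vee \otimes M\bigr), \qquad M := (\wedge^5 E)^{\otimes 2}, \]
with $\deg M = -h$, whose zero locus in $\bP(F)$ is the spectral cover $D$. A geometric fiber $\cX_t$ is singular if and only if the corresponding pencil of quadrics contains a member of rank $\le 3$, i.e., if and only if $f(t)$ has a repeated root; hence $\Delta(\pi)$ coincides as a divisor with the zero locus of the fiberwise discriminant of $f$.

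Next, I would invoke the fact that the discriminant of a binary quintic is a $\GL(W)$-equivariant polynomial of degree $8$ in its coefficients; a one-line scaling check ($\lambda \in \bG_m \subset \GL(W)$ acts on $\Sym^5 W^\vee$ by $\lambda^{-5}$, hence on a degree-$8$ polynomial by $\lambda^{-40}$) identifies it as a map $\Sym^8(\Sym^5 W^\vee) \to (\det W^\vee)^{\otimes 20}$. Globalizing and accounting for the twist by $M$ produces
\[ \mathrm{disc}(f) \in H^0\bigl(\bP^1,\, (\det F^\vee)^{\otimes 20} \otimes M^{\otimes 8}\bigr), \]
whose degree is $20 \cdot (h/2) + 8 \cdot (-h) = 2h$, so $\deg \Delta(\pi) = 2h$. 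The main point requiring care is the bookkeeping of weights in the twist by $M$, namely that the discriminant is of degree $8$ in the coefficients and of weight $20$ in $\det W^\vee$. As an independent sanity check, one can compute $[D]$ in the Hirzebruch surface $\bP(F)$, derive $g(D) = h - 4$ by adjunction, and invoke Riemann-Hurwitz for the degree-$5$ cover $D \to \bP^1$ to recover $\deg \Delta(\pi) = 2g(D) + 8 = 2h$ in the generic case.
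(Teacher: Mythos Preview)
Your proof is correct and follows essentially the same approach as the paper: both identify the discriminant as a section of the line bundle obtained by tracking the degree-$8$, weight-$20$ discriminant of binary quintics through the twists coming from \eqref{eqn:composition}, and both arrive at $20\cdot(h/2)-8h=2h$. The paper packages this as a composite map $\cO_{\bP^1}(-10h)\to\cO_{\bP^1}(-8h)$ by taking $\Sym^8$ of the dual of \eqref{eqn:composition} and precomposing with the discriminant inclusion, while you compute directly the target line bundle $(\det F^\vee)^{\otimes 20}\otimes M^{\otimes 8}$; these are dual formulations of the same weight count. Your Riemann--Hurwitz sanity check is a nice addition not present in the paper.
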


\begin{proof}
The discriminant divisor is defined by the vanishing of
\begin{align*}
\cO_{\bP^1}(-10h)&\cong
((\wedge^2\pi_*(\cI_{\cX}(2))^\vee)^\vee)^{\otimes 20} \\
&\to
\Sym^8 ( \Sym^5 (\pi_*(\cI_{\cX}(2))^\vee)^\vee)\to
(\wedge^5 \pi_*\omega_\pi^{-1})^{\otimes 16}\cong
\cO_{\bP^1}(-8h).
\end{align*}
We are using the eighth symmetric power of the dual
of the linear transformation coming from (\ref{eqn:composition}).  
\end{proof}

\begin{coro}
\label{cor.degree}
For a family $\pi\colon \cX\to \bP^1$ of height $h$ the
morphism $\bP^1\to \bP(1,2,3)$ has degree $6h$,
i.e., the image of the class $[\bP^1]$ is $6h$ times the positive generator
of the divisor class group of $\bP(1,2,3)$.
\end{coro}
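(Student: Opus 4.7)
The plan is to apply the projection formula, combining Proposition~\ref{discriminantdegree} with the standard intersection number $[\mathcal{O}(1)]^2 = \tfrac{1}{6}$ on the weighted projective space $\bP(1,2,3)$. Write $\phi\colon \bP^1 \to \bP(1,2,3)$ for the morphism induced by $\pi$ via the classifying map $\bP^1 \to \mathcal{M}$, the gerbe \eqref{eqn.MtoUmodPGL2}, and the coarse moduli morphism $[U/\PGL_2] \to \bP(1,2,3)$.

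The first step is to identify the class of the discriminant locus $\Delta \subset \bP(1,2,3)$. Since $\bP(1,2,3) = \Proj\bQ[J_4, J_8, J_{12}]$ with $J_d$ of weight $d/4$, and the classical discriminant of a binary quintic is a homogeneous polynomial of degree $8$ in the coefficients, it corresponds to a section of $\mathcal{O}_{\bP(1,2,3)}(2)$. Thus $[\Delta] = 2\,[\mathcal{O}(1)]$ in the divisor class group of $\bP(1,2,3)$, whose positive generator is $[\mathcal{O}(1)]$.

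Next, I would invoke Proposition~\ref{discriminantdegree}: the pullback $\phi^{-1}(\Delta) = \Delta(\pi)$ is a divisor of degree $2h$ on $\bP^1$. Working rationally (the class $[\mathcal{O}(1)]$ is only $\bQ$-Cartier on $\bP(1,2,3)$, so $6[\mathcal{O}(1)]$ is Cartier and its pullback has degree $6h$), this forces $\deg\phi^{*}[\mathcal{O}(1)] = h$. Writing $\phi_*[\bP^1] = m\,[\mathcal{O}(1)]$ in the divisor class group, the projection formula then yields
\[
\tfrac{m}{6} \;=\; m\,[\mathcal{O}(1)]^2 \;=\; \phi_*[\bP^1]\cdot[\mathcal{O}(1)] \;=\; \deg\phi^{*}[\mathcal{O}(1)] \;=\; h,
\]
so $m = 6h$, as claimed. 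The essential input is Proposition~\ref{discriminantdegree}; the two remaining ingredients (the weighted degree of the discriminant, and the self-intersection $[\mathcal{O}(1)]^2 = 1/(1\cdot 2\cdot 3)$) are standard facts about $\bP(1,2,3)$, so I anticipate no substantive obstacle.
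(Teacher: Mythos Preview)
Your argument is correct and is essentially the intended one: the paper states the corollary without proof immediately after Proposition~\ref{discriminantdegree}, and your deduction via the projection formula, the identification $[\Delta]=2[\cO(1)]$ in $\mathrm{Cl}(\bP(1,2,3))$, and the standard identity $[\cO(1)]^2=\tfrac{1}{6}$ supplies exactly the omitted details. One small remark: the cleanest way to justify $\deg\phi^*[\cO(1)]=h$ is to note (as you implicitly do) that the discriminant, being the degree-$8$ invariant, is a section of $\cO(2)$ on $\bP(1,2,3)$ and simultaneously defines $\Delta(\pi)$ on $\bP^1$; since the generic point of $\Delta$ lies in the scheme locus of $[U/\PGL_2]$, no multiplicity is introduced in passing through the coarse moduli map, so $\phi^*\Delta=\Delta(\pi)$ as $\bQ$-divisors and the rest follows.
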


\begin{prop}
\label{noheight2prop}
Let $\pi\colon \cX\to \bP^1$ be a nonconstant flat generically smooth family of degree 4 Del Pezzo surfaces
with $\mathsf{ADE}$-singularities.
Then:
\begin{itemize}
\item
We have $h(\cX)\ge 4$.
\item If the spectral cover $D$ is irreducible then
$h(\cX)\neq 6$.
\item If in addition the 
monodromy action on the lines does not factor through $\fS_5 \subset W(\sD_5)$
then $h(\cX)\neq 4$.
\end{itemize}
\end{prop}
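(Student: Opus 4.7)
The plan is to compute the class of the spectral cover $D$ inside the Hirzebruch surface $\F_n=\bP(\pi_*\cI_\cX(2))$ and to apply Lemma \ref{lem:hirzebruchsurface}, with the monodromy conclusion for (iii) coming from the rationality of $D$. Using Corollary \ref{coro:numbericalinvariant}, I would write $\pi_*\cI_\cX(2)\cong \cO_{\bP^1}(-a)\oplus \cO_{\bP^1}(-b)$ with $a\ge b\ge 0$ and $a+b=h/2$, so that $\F_n=\bP(\pi_*\cI_\cX(2))$ with $n=a-b$, where $\xi$ is the $(-n)$-curve and $f$ a fiber. Applying Lemma \ref{lem:easyintersectiontheory} with $E=(\pi_*\cI_\cX(2))^\vee$ and $F=(\wedge^5(\pi_*\omega_\pi^{-1})^\vee)^{\otimes 2}$ (a line bundle of degree $h$) yields
\[
[D]=5\xi+mf,\qquad m=\tfrac{h}{4}+\tfrac{5n}{2}.
\]
Moreover $D$ is reduced: the generic fiber of $\pi$ is smooth, hence the generic fiber of $D\to\bP^1$ is a reduced scheme of length $5$, and no component of $D$ is contained in a fiber of $\F_n\to\bP^1$ (otherwise some $X_t$ would lie in a pencil of only singular quadrics), so every component of $D$ dominates $\bP^1$.

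For (i), Lemma \ref{lem:hirzebruchsurface} applied to reduced $D$ in class $5\xi+mf$ gives $4n\le m$, i.e., $h\ge 6n$. A nonconstant family has positive discriminant, so $h\ge 2$; if $h=2$ the equation $a+b=1$ forces $n\ge 1$, contradicting $h\ge 6n\ge 6$. Hence $h\ge 4$. For (ii), the irreducibility half of Lemma \ref{lem:hirzebruchsurface}, combined with $m>0$, gives $m\ge 5n$, i.e., $h\ge 10n$. If $h=6$ and $D$ is irreducible, then $a+b=3$ forces $n\ge 1$ and so $h\ge 10$, a contradiction.

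For (iii), assume $h=4$ and $D$ irreducible. The bound $h\ge 10n$ forces $n=0$, so $a=b=1$, $\F_0\cong \bP^1\times\bP^1$, and $[D]=5\xi+f$ is a curve of bidegree $(5,1)$. Its arithmetic genus is $(5-1)(1-1)=0$, and since $D$ is reduced and irreducible we have $D\cong \bP^1$; in particular $\JJ(D)=0$. Since the $(\bZ/2)^4$-kernel of $W(\sD_5)\twoheadrightarrow \fS_5$ acting on the lines is identified, via the tower $\tilde D\to D\to\bP^1$ of Section \ref{spectralcover}, with the monodromy of $2$-torsion of the relative Prym, the vanishing of $\JJ(D)$ forces this monodromy to be trivial. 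So the monodromy on lines lies in $\fS_5\subset W(\sD_5)$, contradicting the hypothesis.

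The main obstacle is the very last step: making precise the claim that $D\cong\bP^1$ collapses the $(\bZ/2)^4$-part of the Weyl group monodromy. Parts (i) and (ii) are essentially numerical, flowing directly from Lemmas \ref{lem:easyintersectiontheory} and \ref{lem:hirzebruchsurface} and the parity constraint $h\equiv 2n\pmod 4$ forced by integrality of $m$. Part (iii) requires matching $\ker\bigl(W(\sD_5)\to \fS_5\bigr)$ with the $2$-torsion of $\Prym(\tilde D/D)$ and then arguing that, when $\JJ(D)=0$, this $2$-torsion gives a trivial local system on $\bP^1$ minus the discriminant --- the passage through the ramification structure of $\tilde D\to D$ being the one place where additional care is needed beyond the class computation.
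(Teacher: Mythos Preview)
Your treatment of parts (i) and (ii) is correct and matches the paper's proof exactly, up to relabelling: the paper writes $\pi_*\cI_\cX(2)\cong\cO(a)\oplus\cO(-h/2-a)$ with $a\le -h/4$, obtains $[D]=(-5a-h)f+5\xi$, and applies the two halves of Lemma~\ref{lem:hirzebruchsurface} to get $-h/3\le a\le -h/4$ (excluding $h=2$) and $-3h/10\le a\le -h/4$ (excluding $h=6$). Your bounds $h\ge 6n$ and $h\ge 10n$ combined with the parity of $n=a-b$ forced by $a+b=h/2$ are the same inequalities in different variables. (Your side assertion $b\ge 0$ is not justified, but you never use it.) The derivation of $D\cong\bP^1$ when $h=4$ is likewise identical to the paper's.

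The divergence is in the last step. The paper argues in one line: since $D\cong\bP^1$ is simply connected, the degree~$2$ cover $\tilde D\to D$ (families of planes in the singular quadrics) is trivial, so the monodromy on the ten families of planes breaks into two $\fS_5$-orbits, i.e., the monodromy of lines lies in $\fS_5$. Your version replaces ``simply connected'' by ``$\JJ(D)=0$'', which is equivalent for curves, but then wraps it in language about ``the $2$-torsion of the relative Prym'' and ``$\Prym(\tilde D/D)$'' that is not correct. The Prym of $\tilde D/D$ is $\JJ(\tilde D)$ here (since $\JJ(D)=0$), and its $2$-torsion need not vanish; it plays no role. What you actually need is the much simpler statement that the \emph{\'etale} double cover $\tilde D_U\to D_U$ extends to an \'etale cover of all of $D\cong\bP^1$ and is therefore split---equivalently, that the class of $\tilde D\to D$ in $H^1_{\text{\'et}}(D,\bZ/2)\cong\JJ(D)[2]$ vanishes. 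You correctly flag that the ramification of $\tilde D\to D$ over $D\smallsetminus D_U$ is the point needing care; the paper's proof leaves this implicit. The clean way to see it is that a small loop in $D_U$ around a ramification point $p$ of $D\to\bP^1$ with index $e$ maps to the $e$-th power of a small loop in $U$, and under the monodromy $\rho:\pi_1(U)\to W(\sD_5)$ that $e$-th power fixes the relevant sheet and acts trivially on its $\pm$ decoration (for a simple $\sA_1$ degeneration this is just $s^2=1$ for a reflection $s$). So drop the Prym language; once you say ``$\tilde D\to D$ is an unramified double cover of $\bP^1$, hence trivial, hence the $W(\sD_5)$-torsor admits an $\fS_5$-reduction'', your argument coincides with the paper's.
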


\begin{proof}
Letting $h=h(\cX)$, it follows from the definition of height that
$(\wedge^5 \pi_*\omega_\pi^{-1})^{\otimes 2}\cong \cO_{\bP^1}(-h)$.
Lemma \ref{lem:easyintersectiontheory} yields
\[
[D]=c_1(\cO_{\bP(\pi_*(\cI_{\cX}(2)))}(5))-hf.
\]

Let us write
\[\pi_*(\cI_{\cX}(2))\cong \cO(a)\oplus \cO(-\frac{h}{2}-a)\]
with $a\le -h/4$.
We set $n=-2a-h/2$.
Then,
\begin{equation}
\label{eqnD}
[D]=(-5a-h)f+5\xi
\end{equation}
on $\bP(\pi_*(\cI_{\cX}(2)))\cong \F_n$.

Now assume that the generic fiber of $\pi$ is smooth.
Then $D$ is reduced, so by Lemma \ref{lem:hirzebruchsurface} we have
$-3a-h\le 0$.
Combining the facts, we have
\[
-\frac{h}{3}\le a\le -\frac{h}{4}.
\]
So, $h=2$ is impossible.

Suppose further that $D$ is irreducible.  
Then the second part of Lemma~\ref{lem:hirzebruchsurface}
implies 
$-5a-h \ge 5n =5(-2a-h/2),$
hence 
\[
-\frac{3h}{10}\le a\le -\frac{h}{4}.
\]
This excludes $h=6$.  

Finally, suppose $h=4$.  The analysis above implies $a=-1,n=0$ and 
$[D]=f+5\xi$, i.e., $D$ has
bidegree $(1,5)$ in $\bP^1 \times \bP^1$.  Since $D$ is irreducible, it is necessarily isomorphic
to $\bP^1$.  Since $\bP^1$ is simply connected, the monodromy action on the
families of planes in the singular quadric hypersurfaces
consists of two $\fS_5$-orbits.  
\end{proof}

\section{Genericity conditions}
\label{sect.genericity}

We continue to work over a perfect field $k$ of characteristic
different from $2$.
The primary case of interest is a generically smooth family
$\pi\colon \cX\to \bP^1$ of degree 4 Del Pezzo surfaces with
square-free discriminant.
We introduce two conditions for $\pi$ to be general.

\begin{defi}
\label{defn.generalfamily}

\

\begin{itemize}
\item[$(\mathrm{G1})$]
$\pi$ has reduced discriminant divisor,
or equivalently, $\cX$ is smooth and each fiber of $\pi$ has at worst a
single $\sA_1$-singularity,
\item[$(\mathrm{G2})$]
$\pi$ has full $W(\sD_5)$-monodromy of lines in smooth fibers.
\end{itemize}
\end{defi}

For $D\to \F\to \bP^1$ with $\F\to \bP^1$ a Hirzebruch surface with divisor
$D$ such that $D\to\bP^1$ is finite and flat of degree 5, there are two
related conditions:
\begin{defi}
\label{defn.generalfamily2}

\

\begin{itemize}
\item[$(\mathrm{G1})'$]
$D$ is simply branched over $\bP^1$.
\item[$(\mathrm{G2})'$]
The normal closure of $k(D)$ over $k(\bP^1)$ has Galois group $\fS_5$.
\end{itemize}
\end{defi}

Given a generically smooth family $\pi\colon \cX\to\bP^1$ of Del Pezzo
surfaces of degree 4, we will say that $\pi$ satisfies
$(\mathrm{G2})'$ if the spectral cover
$D\subset \bP(\pi_*(\mathcal{I}_{\cX}(2)))\to\bP^1$
satisfies $(\mathrm{G2})'$.
We remark that in $W(\sD_5)$ there are no proper subgroups strictly
containing $\fS_5$.
So, $(\mathrm{G2})'$ implies either $\fS_5$-monodromy or
full $W(\sD_5)$-monodromy of lines in smooth fibers.

\begin{prop}
\label{prop.genus}
A family $\pi\colon \cX\to \bP^1$ of height $h=h(\cX)$ satisfying $(\mathrm{G1})$ has $2h$ singular
fibers.
If, furthermore, $\pi$ satisfies $(\mathrm{G2})'$, then the spectral curve
is an irreducible nonsingular curve of genus $h-4$.
\end{prop}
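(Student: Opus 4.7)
The plan is to handle the two assertions in sequence, using the explicit link between the singular fibers of $\pi$ and the ramification structure of the spectral cover $D\to\bP^1$.

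For the count of singular fibers, I invoke Proposition~\ref{discriminantdegree}, which gives $\deg\Delta(\pi)=2h$. Condition $(\mathrm{G1})$ is precisely that $\Delta(\pi)$ is a reduced divisor on $\bP^1$, so it consists of $2h$ distinct closed points, each corresponding to a fiber with a single $\sA_1$-singularity. That yields the $2h$ singular fibers.

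For the spectral curve I would proceed in three steps. First, irreducibility: $(\mathrm{G2})'$ forces the Galois group of the normal closure of $k(D)/k(\bP^1)$ to equal $\fS_5$, which acts transitively on the five sheets, so $D$ is geometrically irreducible. Second, simple branching: at each fiber of $\pi$ with a single $\sA_1$-singularity, the pencil of quadrics has (as recalled at the start of Section~\ref{spectralcover}) a unique rank-$4$ singular member appearing with multiplicity $2$ in the spectral form $\det(uP+vQ)$, while the other singular members contribute simple zeros. Hence the fiber of $D\to\bP^1$ over such a point consists of four geometric points, one of ramification index $2$ and three unramified; this is simple branching. Since by $(\mathrm{G1})$ this accounts for all the ramification, $D\to\bP^1$ is simply branched at exactly $2h$ points. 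Third, smoothness: reducedness of the generic fiber of $D\to\bP^1$ (five distinct sheets, by generic smoothness of $\pi$) combined with simple branching implies $D$ is smooth, since \'etale-locally at each branch point $D$ is a disjoint union of $\Spec k[s]/(s^2-t)$ with three unramified sheets. The genus then follows from Riemann--Hurwitz applied to the degree-$5$ map with $2h$ simple branch points: $2g(D)-2=-10+2h$, hence $g(D)=h-4$.

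The main care point is the translation of $(\mathrm{G1})$ into simple branching of $D$: one must verify that a rank-$4$ specialization of the pencil of quadrics contributes exactly a double zero (not a triple or worse) to the determinantal form. This is the discriminant-theoretic shadow of an isolated $\sA_1$-singularity and reduces to a local rank-drop computation.
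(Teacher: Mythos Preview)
Your proof is correct and follows essentially the same approach as the paper: the paper's one-line proof (``immediate from Proposition~\ref{discriminantdegree}'') implicitly uses exactly the chain you spell out---$\deg\Delta(\pi)=2h$, reducedness from $(\mathrm{G1})$ giving $2h$ simple branch points, irreducibility from $(\mathrm{G2})'$, and Riemann--Hurwitz for the genus. You have simply unpacked the details, including the translation of $(\mathrm{G1})$ into simple branching of $D\to\bP^1$, which the paper records separately as the equivalence $(\mathrm{G1})\Leftrightarrow(\mathrm{G1})'$ just after this proposition.
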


\begin{proof}
This is immediate from Proposition \ref{discriminantdegree}.
\end{proof}

Condition $(\mathrm{G1})$ is open in moduli.
In the locus of moduli where $(\mathrm{G1})$ is satisfied, condition $(\mathrm{G2})$ is an
open and closed condition.
This means, if we let
$\Hom(\bP^1,\mathcal{M};h)$ denote the moduli stack of height $h$
families of Del Pezzo surfaces of degree 4,
then the families satisfying $(\mathrm{G1})$ are the points of a
well-defined open substack $\Hom_{(\mathrm{G1})}(\bP^1,\mathcal{M};h)$.
There is an open and closed substack
\[\Hom_{(\mathrm{G1}),(\mathrm{G2})}(\bP^1,\mathcal{M};h)\subset \Hom_{(\mathrm{G1})}(\bP^1,\mathcal{M};h)\]
where both $(\mathrm{G1})$ and $(\mathrm{G2})$ are satisfied.

If $D\subset \bP(\pi_*(\mathcal{I}_{\cX}(2)))\to\bP^1$ is the spectral cover
of $\pi$, then it satisfies $(\mathrm{G1})'$ if and only if
$\pi$ satisfies $(\mathrm{G1})$.
Furthermore, property $(\mathrm{G2})$ for $\pi$ implies
that $D\subset \bP(\pi_*(\mathcal{I}_{\cX}(2)))\to\bP^1$
satisfies $(\mathrm{G2})'$, although the reverse implication does not hold.
Because any subgroup of $\fS_5$ containing a transposition and acting
transitively on $\{1,\dots,5\}$ is the full group $\fS_5$,
if in Definition \ref{defn.generalfamily2} $(\mathrm{G1})'$ holds and
$D$ is irreducible, then $(\mathrm{G2})'$ holds as well.

There are open and closed substacks
\[
\Hom_{(\mathrm{G1}),(\mathrm{G2})'}(\bP^1,\mathcal{M};h)\subset \Hom_{(\mathrm{G1})}(\bP^1,\mathcal{M};h)
\]
and
\[
\Hom_{(\mathrm{G1})',(\mathrm{G2})'}(\bP^1,[U/\PGL_2];h)\subset \Hom_{(\mathrm{G1})'}(\bP^1,[U/\PGL_2];h),
\]
where the notation is self-explanatory.
We will remove $h$ from the notation when we do not want to constrain the
height.

\begin{lemm}
\label{lem.smoothexpdim}
The stack $\Hom_{(\mathrm{G1})}(\bP^1,\mathcal{M};h)$ is smooth of dimension 
$\frac{3}{2}h+2.$
\end{lemm}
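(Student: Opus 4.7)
The stack $\mathcal{M}$ is smooth Deligne-Mumford of relative dimension $2$, so $T_{\mathcal{M}}$ is a rank-$2$ vector bundle on $\mathcal{M}$. By standard deformation theory of maps from a smooth projective curve to a smooth DM stack, the tangent and obstruction spaces to $\Hom(\bP^1,\mathcal{M})$ at a point $f\colon \bP^1\to\mathcal{M}$ are $H^0(\bP^1,f^*T_{\mathcal{M}})$ and $H^1(\bP^1,f^*T_{\mathcal{M}})$, respectively. Since $(\mathrm{G1})$ is already known to be open in moduli, the plan is to show that for every $(\mathrm{G1})$-family $f$ we have $H^1(\bP^1,f^*T_{\mathcal{M}})=0$ (giving smoothness at $f$) and that $\chi(\bP^1,f^*T_{\mathcal{M}})=\tfrac{3h}{2}+2$ (giving the dimension).

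For the dimension, I would use Proposition~\ref{prop:picardM} and Lemma~\ref{lemm:Kclass}: via the etale gerbe \eqref{eqn.MtoUmodPGL2}, $K_{\mathcal{M}}$ is the pullback of $-3H$, where $H$ generates $\Pic([U/\PGL_2])\cong\bZ$. Hence $\deg f^*T_{\mathcal{M}}=3\deg f^*H$, and the task reduces to showing $\deg f^*H=h/2$. For this, I would unwind the spectral cover construction: set $V=\pi_*\omega_\pi^{-1}$ and $W=\pi_*(\mathcal{I}_{\cX}(2))$, so that $\det V\cong\det W\cong\mathcal{O}_{\bP^1}(-h/2)$ by Corollary~\ref{coro:numbericalinvariant} and Definition~\ref{height}, and the pencil of quadrics corresponds to a section $Q\in\Gamma(\bP^1,\Sym^5 W^{\vee}\otimes(\det V)^{\otimes 2})$. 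The classical invariant $J_d$ of degree $d$ in the coefficients of a binary quintic is an $\mathrm{SL}_2$-invariant of $\GL_2$-weight $5d/2$, hence $J_d(Q)$ is a section of
\[
(\det W^{\vee})^{\otimes 5d/2}\otimes(\det V)^{\otimes 2d}\cong\mathcal{O}_{\bP^1}(dh/4).
\]
Applying this for $d=4,18$ and invoking $H=[Z(J_{18})]-4[Z(J_4)]$ from Lemma~\ref{UmodPGL2} yields $\deg f^*H=\tfrac{18h}{4}-4\cdot\tfrac{4h}{4}=\tfrac{h}{2}$, as required.

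For the $H^1$-vanishing, I would reduce via the etale gerbe to the composite $g\colon\bP^1\to[U/\PGL_2]$ and present $g$ by a $\PGL_2$-torsor $\pi_P\colon P\to\bP^1$ together with a $\PGL_2$-equivariant lift $\tilde{g}\colon P\to U\subset\bP^5$. Over an algebraically closed field $\Br(\bP^1)=0$, so $P$ is a Hirzebruch surface $\F_n$; in particular $H^2(P,\mathcal{O}_P)=0$ and $H^1$ vanishes on every nef line bundle. Since $\tilde{g}^*\mathcal{O}_{\bP^5}(1)$ is the pullback of an ample bundle, hence nef, pulling back the Euler sequence $0\to\mathcal{O}\to\mathcal{O}_{\bP^5}(1)^{\oplus 6}\to T_{\bP^5}\to 0$ gives $H^1(P,\tilde{g}^*T_{\bP^5})=0$. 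The tangent sequence for the quotient
\[
0\to\mathfrak{pgl}_2\otimes\mathcal{O}_P\to\tilde{g}^*T_U\to\pi_P^*g^*T_{[U/\PGL_2]}\to 0,
\]
together with $(\pi_P)_*\mathcal{O}_P=\mathcal{O}_{\bP^1}$ and Leray, then propagates the vanishing down to $H^1(\bP^1,f^*T_{\mathcal{M}})=0$.

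The chief obstacle is this last step: one must uniformly treat the possible torsor types $\F_n$, verify nef-ness of $\tilde{g}^*\mathcal{O}_{\bP^5}(1)$ (where $(\mathrm{G1})$ and the non-constancy of $f$ exclude contracted components), and carefully justify the cohomological descent through the relative tangent sequence above. The dimension count, by contrast, is essentially a bookkeeping exercise with the weights of the classical binary quintic invariants.
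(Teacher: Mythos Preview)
Your Euler--Riemann--Roch computation of the dimension is correct and is a pleasant alternative to the paper's count. The paper instead factors through $\Bun_{\PGL_2}$ and computes $h^0(D,N_{D/\F})=\tfrac{3}{2}h+5$ directly on the spectral curve; your route via $K_{[U/\PGL_2]}=-3H$ and $\deg f^*H=h/2$ gives the same number with less case analysis, and the weight bookkeeping $J_d(Q)\in\Gamma(\cO_{\bP^1}(dh/4))$ is right.

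The gap is in the $H^1$-vanishing. A $\PGL_2$-torsor $P\to\bP^1$ is \emph{not} a Hirzebruch surface: its fibres are copies of the three-dimensional affine group $\PGL_2$, so $P$ is a fourfold. What $\Br(\bP^1)=0$ buys you is that the torsor arises from a rank-$2$ bundle $E$, and the associated $\bP^1$-bundle $\bP(E)\cong\F_n$ is a Hirzebruch surface; but there is no map $\F_n\to\bP^5$ to which one could pull back the Euler sequence. Working instead on the genuine torsor $P$, the Euler sequence makes sense, but $\cO_{\bP^5}(1)$ carries no $\PGL_2$-linearisation (only $\SL_2$ or $\GL_2$), so $(\pi_P)_*\tilde g^*\cO_{\bP^5}(1)$ does not split off the piece you need, and ``nef on a Hirzebruch surface'' is simply not applicable. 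A further warning sign is that your argument never uses $(\mathrm{G1})$ in an essential way: the pullback $g^*T_{[U/\PGL_2]}$ is a rank-$2$ bundle of degree $3h/2$ on $\bP^1$, and there is no a priori bound on its splitting type without controlling the Hirzebruch index $n$, which is exactly what $(\mathrm{G1})$ (via reducedness of the spectral divisor) provides.

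The paper's proof is organised differently. It passes to $[U/\PGL_2]$ via the \'etale gerbe, then studies the forgetful map $\Hom_{(\mathrm{G1})'}(\bP^1,[U/\PGL_2];h)\to\Bun_{\PGL_2}$. Condition $(\mathrm{G1})$ forces the spectral curve $D\subset\F$ to be smooth; relative deformation theory identifies the fibrewise tangent/obstruction with $H^i(D,N_{D/\F})$, and a degree count gives $H^1(D,N_{D/\F})=0$, so the forgetful map is smooth of relative dimension $\tfrac{3}{2}h+5$, while $\Bun_{\PGL_2}$ is smooth of dimension $-3$. A separate argument handles the case where $D$ is reducible (necessarily $D=E\sqcup D'$ with $E$ the negative section, occurring only when $6\mid h$), using a logarithmic tangent computation to pin down the Hirzebruch type. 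If you want to rescue your strategy, the natural fix is to lift to $\GL_2$ and work with the associated rank-$6$ bundle on $\bP^1$; but showing its $H^1$ vanishes will again require the bound on $n$ coming from $(\mathrm{G1})$, and at that point you are close to the paper's argument anyway.
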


\begin{proof}
For $h=0$ the assertion is trivial, so we assume $h>0$.
As remarked above,
$(\mathrm{G1})$ implies that the spectral curve $D$ is smooth.
Since $\mathcal{M}$ is \'etale over $[U/\PGL_2]$, it suffices to
show that the space of maps $\bP^1\to [U/\PGL_2]$ of degree $6h$
with smooth spectral curve is smooth.

We treat two cases, according to whether or not 
the spectral curve is irreducible.
First suppose that the spectral curve is irreducible.
The stack of morphisms $\bP^1\to B(\PGL_2)$ is
the stack $\Bun_{\PGL_2}$ whose fiber over any scheme $T$
is the category of principal $\PGL_2$-bundles over
$T\times \bP^1$.
It is a smooth algebraic stack with two irreducible components,
each of dimension $-3$.
Identifying $\PGL_2$-bundles with $\bP^1$-bundles,
these correspond to the Hirzebruch surfaces $\F_n$ with
$n$ even, respectively, $n$ odd.
The stack of morphisms from $\bP^1$ to $[U/\PGL_2]$ is an
algebraic stack
$\Hom(\bP^1,[U/\PGL_2])$, and its points correspond to covers
$D\subset \F\to \bP^1$ as above.
Standard deformation theory in a relative setting
(see \cite[Cor.\ 5.4]{TDIV}) implies that the morphism
\begin{equation}
\label{eqn.toBun}
\Hom_{(\mathrm{G1})'}(\bP^1,[U/\PGL_2];h) \to \Bun_{\PGL_2}
\end{equation}
is smooth of relative dimension $\frac{3}{2}h+5$.
Indeed, in this situation $\cO_{\F}(D)|_D$
has degree $5h/2$, so $H^1(D,\cO_{\F}(D)|_D)=0$ and 
$$
\dim H^0(D,\cO_{\F}(D)|_D)=\frac{3}{2}h+5.
$$

If the spectral curve is reducible, then it must be a
disjoint union of the exceptional curve $E\subset \F_n$ and an
irreducible curve different from $E$.
Then $h$ must be divisible by $6$, with $n=h/6$ and spectral curve
$D\cup E$ with $[D]=(2/3)hf+4\xi$, by \eqref{eqnD} (with the same notation).
By deformation theory for the curve in the surface $E\subset \F_n$,
the vanishing
\begin{equation}
\label{extvanishes}
\Ext^1(\Omega^1_{\F_n}(\log E),\mathcal{O}_{\F_n})=0
\end{equation}
implies that 
near a point with reducible spectral curve
the morphism \eqref{eqn.toBun} factors through the locally closed substack of
$\Bun_{\PGL_2}$ corresponding to the isomorphism type of $\F_n$.
We conclude as above by computing the dimension of $|D|$ to be
$\frac{5}{3}h+4$ and
noting that
$\frac{5}{3}h+4-(n+2)=\frac{3}{2}h+2$.
The vanishing \eqref{extvanishes} may be seen by comparing
the sequences of Ext groups associated with the standard exact sequences
\[
\xymatrix{
0\ar[r] &
\Omega^1_{\F_n}\ar[r]\ar@{=}[d] &
\Omega^1_{\F_n}(\log E)\ar[r]\ar[d] &
\mathcal{O}_E \ar[r]\ar[d] & 0 \\
0 \ar[r] &
\Omega^1_{\F_n}\ar[r] &
\Omega^1_{\F_n}(\log \sum D_i)\ar[r] &
\bigoplus \mathcal{O}_{D_i}\ar[r] & 0
}
\]
where $D_i$ are the toric divisors on $\F_n$,
and applying the triviality of the middle term of the bottom sequence
(see, e.g., \cite[\S8.1]{CLS}) and the isomorphism
$\Ext^j( \mathcal{O}_{D_i}, \mathcal{O}_{\F_n} ) \cong
H^{j-1}( D_i, \mathcal{O}_{\F_n}(D_i)|_{D_i})$.
\end{proof}


\begin{rema} \label{rem.expdim}
Lemma \ref{lem.smoothexpdim} is consistent with the
expected dimension of the Kontsevich space of 
maps $f:\bP^1 \to \mathcal{M}$.
Indeed, if $g:\bP^1\to [U/\PGL_2]$
denotes the composite map to $[U/\PGL_2]$ then the expected dimension is
$$
\deg(g^*K_{[U/\PGL_2]})-1=\frac{3}{2}h-1.
$$
\end{rema}

\section{Log general type surfaces}
\label{loggeneraltype}

We work over an algebraically closed field of characteristic different
from $2$.

\begin{prop} \label{prop:log}
Let $X$ be a smooth quartic Del Pezzo surface with lines
$D_1,\ldots,D_{16} \subset X$.
Then the pair $(X,(D_1,\ldots,D_{16}))$ is has
log canonical singularities and ample log canonical class.
\end{prop}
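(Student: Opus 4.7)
The plan is to work in the blow-up model $X=\Bl_{p_1,\dots,p_5}(\bP^2)$ where the five points are in general position (no three collinear). The $16$ lines are then the five exceptional curves $E_i$, the ten strict transforms $L_{ij}$ of the lines $\overline{p_ip_j}$, and the strict transform $C$ of the (smooth) conic through $p_1,\dots,p_5$. With $H$ the pullback of the hyperplane class, one has $K_X=-3H+\sum_i E_i$, and a direct computation gives
\[
D:=\sum_{k=1}^{16}D_k \;=\; \sum_i E_i+\sum_{i<j}(H-E_i-E_j)+(2H-\sum_i E_i)\;=\;12H-4\sum_i E_i.
\]
Hence $K_X+D=9H-3\sum_i E_i=-3K_X$, and ampleness of the log canonical class follows from the ampleness of $-K_X$.

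For the singularity claim, since $X$ is smooth and each $D_k\cong\bP^1$ is smooth, it suffices to show that $D$ is a simple normal crossings divisor; this will imply log canonicity with all coefficients equal to $1$. So I would verify the two conditions (a) any two meeting components meet transversally at a single point, and (b) no three components pass through a common point. The intersection numbers $E_i\cdot L_{ij}=1$, $E_i\cdot C=1$, $L_{ij}\cdot L_{kl}=1$ when $\{i,j\}\cap\{k,l\}=\emptyset$, and all other intersections between distinct lines are $0$, give (a) immediately: two smooth curves on a smooth surface with intersection number $1$ meet transversally at one point.

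The main point is (b). I would observe first that any pair of the $16$ lines with intersection number $0$ is in fact disjoint (being distinct irreducible curves on a smooth surface with zero intersection). Then a triple meeting at a common point would require all three pairwise intersection numbers to be nonzero. A quick case analysis rules this out: two distinct $E_i,E_j$ are disjoint; two $L_{ij},L_{ik}$ sharing an index are disjoint; any $L$ is disjoint from $C$; and three $L_{ij}$ with pairwise disjoint index sets would require six indices drawn from $\{1,\dots,5\}$. The remaining cases (two $L$'s with disjoint indices plus an $E$, or one $E$ plus one $L$ plus $C$, etc.) all contain a pair with intersection $0$. I do not expect any real obstacle here: the only mild point is verifying that, in each case, the two curves labelled as having intersection number $0$ are genuinely disjoint, which holds automatically on a smooth surface for distinct irreducible curves with zero intersection.
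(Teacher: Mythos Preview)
Your proof is correct and follows essentially the same approach as the paper: compute $D\equiv -4K_X$ to get ampleness of $K_X+D=-3K_X$, and verify via the blow-up model that the $16$ lines form a simple normal crossings divisor (at most two through any point), hence the pair is log canonical. The paper merely asserts the ``at most two'' claim as straightforward from the blow-up description, while you spell out the case analysis; your verification that the incidence graph of the $16$ lines is triangle-free is exactly what is needed.
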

\begin{proof}
Computing in the Picard group, we find that
$$D_1+\cdots+D_{16}\equiv -4K_X$$
thus the log canonical class $K_X+D_1+\cdots+D_{16}$ is ample.  
Recall that at most two $D_i$ can be incident at any point $x\in X$---this
is straightforward from the classical realization of $X$ as the blow-up of
$\bP^2$ at five distict points with no three collinear.
Since each $D_i$ is smooth, the union $\cup_{i=1}^{16}$ is strict normal
crossings; thus the pair is log canonical.
\end{proof}

We would like to compactify the moduli space of quartic Del Pezzo surfaces,
considered as a moduli space of `stable log surfaces', in the sense of 
Koll\'ar, Shepherd-Barron, and Alexeev \cite{Alexeev,Hacking}.  
In positive characteristic the general
construction of moduli spaces of stable log surfaces
is not fully worked out, but the specific space we require can be obtained via
other techniques \cite{HKT}.

The definition of a family of stable log surfaces is still evolving; we
refer the reader to \cite{Kollar} for more detailed discussion.   
For our purposes, we may use the following restricted definition:
\begin{defi}
Let $B$ be a scheme of finite type over the base field.  
A {\em family of mildly singular stable log varieties} consists of
\begin{itemize}
\item{a scheme $\pi:\cX \ra B$ with $\pi$ flat, proper, and Gorenstein;}
\item{effective Cartier divisors $\cD_1,\ldots,\cD_r \subset \cX$
flat over $B$;}
\end{itemize}
satisfying the following:  For each closed point $b\in B$,
\begin{itemize}
\item{the pair 
$(\cX_b,(\cD_{1b},\ldots,\cD_{rb}))$ is semilog canonical;}
\item{$\omega_{\pi}(\cD_1+\cdots+\cD_r)|\cX_b$ is ample, where
$\omega_{\pi}$ is the relative dualizing sheaf.}
\end{itemize}
\end{defi}
Note that $\omega_{\pi}(\cD_1+\cdots+\cD_r)|\cX_b=
\omega_{\cX_b}(\cD_{1b}+\cdots+\cD_{rb})$ for each $b\in B$,
by standard properties of the dualizing sheaf.  

In general, stable varieties need not be Gorenstein and the
boundary components need not be Cartier, which is why we
describe these as `mild' singularities.  The mildness conditions
behave extremely well in families---both are open conditions:
If $\cX_b$ is Gorenstein then $\pi$ is Gorenstein over some
neighborhood of $b$; if $\cD_{j,b}$ is Cartier (and $\cD_j$
is flat over $B$) then $\cD_j$ is Cartier near $b$.  In practice,
this means that the mildly singular varieties are open in moduli
spaces of stable varieties.

Let $\wpM$ denote the connected
component of the moduli stack of stable log surfaces
$(X,(D_1,\ldots,D_{16}))$ containing the pairs
introduced in Proposition~\ref{prop:log}.
We recall some key properties:
\begin{itemize}
\item
$\wpM\simeq \oM_{0,5}$ \cite[Rem.~1.3, Thm.~10.19]{HKT},
with the smooth quartic Del Pezzos identified with $M_{0,5}$;
\item
the singularities of the fibers are `stably toric', obtained
by gluing together toroidal varieties along their boundaries
\cite[Thm.~1.1]{HKT}.
\end{itemize}

For our purposes, we enumerate the singular fibers over
the zero-dimensional and one-dimensional boundary strata of $\oM_{0,5}$.
This description is implicit in \cite[Rem.~1.3(5)]{HKT}:

\

\noindent {\bf one-dimensional}: The surface $X$ consists of six
components:
\begin{itemize}
\item{$X_1$, the minimal resolution of a quartic Del Pezzo surface
with a single node with conductor divisors
\begin{itemize}
\item{$B_{12}$ the exceptional divisor over the node;}
\item{$B_{1k},k=3,4,5,6$ the proper transforms of the lines
meeting the node;}
\end{itemize}}
\item{$X_2$, the blow up of a quadric surface at four coplanar points
$p_3,p_4,p_5,p_6$ with conductor divisors
\begin{itemize}
\item{$B_{12}$ the proper transform of the hyperplane sections;}
\item{$B_{2k}, k=3,4,5,6$ the exceptional divisors over the $p_k$;}
\end{itemize}}
\item{$X_k, k=3,4,5,6$ copies of the Hirzebruch surface $\bF_0$
with distinguished rulings
\begin{itemize}
\item{$B_{k1}\in |f_k|$ ;}
\item{$B_{k2}\in |f'_k|$;}
\end{itemize}
}
\end{itemize}
Note that $X_1$ and $X_2$ are in fact isomorphic.

The limits of the $16$ lines are
\begin{itemize}
\item{$D_{1k}$: union of the eight lines of $X_1$ {\em not } incident
to the node and the ruling $\in |f'_k|$ meeting it;}
\item{$D_{2k}$: union of the eight lines of $X_2$ {\em not } incident
to the node and the ruling $\in |f_k|$ meeting it.}
\end{itemize}

In particular, $X$ is D-semistable in the sense of Friedman \cite{Fr}; each
line $D_{ij}\subset X$ is cut out transversally.  Thus the singularities
are mild.

\begin{figure}
\centerline{
\includegraphics[scale=.6]{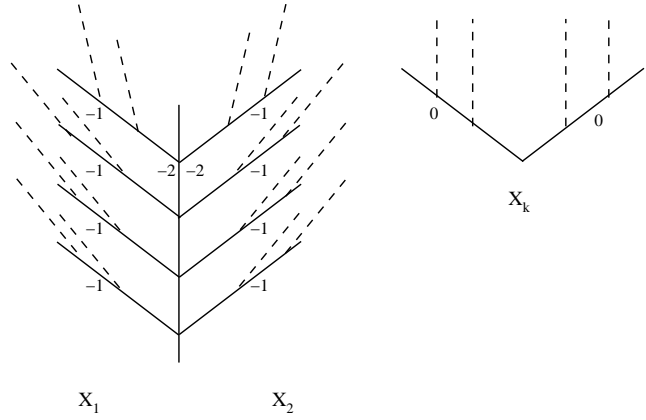}}
\caption{the components, with conductors solid and lines dashed}
\end{figure}

\

\noindent {\bf zero-dimensional}: The surface $X$ consists of $12$
components of two types:
\begin{itemize}
\item{four components $X_1,X_2,X_3,X_4$, each isomorphic to $\bP^2$ blown up at
four non-collinear points $\{x'_{i,i-1},x''_{i,i-1},x'_{i,i+i},x''_{i,i+1}\}, i \in \bZ/4\bZ$, with conductor divisor
\begin{itemize}
\item{the four exceptional divisors $B'_{i,i-1},B''_{i,i-1},B'_{i,i+1},B''_{i,i+1}$ in $X_i$;}
\item{the proper transforms $B_{i,i-1}$ and $B_{i,i+1}$ of the lines joining $\{x'_{i,i-1},x''_{i,i-1}\}$ and 
$\{x'_{i,i+1},x''_{i,i+1}\}$ respectively;}
\end{itemize}}
\item{eight components $X'_{12},X''_{12},X'_{23},X''_{23},
X'_{34},X''_{34},X'_{41},X''_{41}$ isomorphic to $\bF_0$, with conductor divisor
consisting of representatives from each ruling.}
\end{itemize}
Note that the components of the second type appear over the codimension-one boundary
points; however, we now have eight such components rather than four.  

We describe the limit of one of the $16$ lines, the others being defined symmetrically.
It has three irreducible components:
\begin{itemize}
\item{
In $X_1$, take the proper transform $D_{1,4',2'}$ of the line joining $\{x'_{1,4},x'_{1,2}\}$.}
\item{
In $X'_{41}$ take the ruling incident to $D_{1,4',2'}$ in one point.}
\item{
In $X'_{12}$ take the ruling incident to $D_{1,4',2'}$ in one point.}
\end{itemize}

\begin{figure}
\centerline{
\includegraphics[scale=.7]{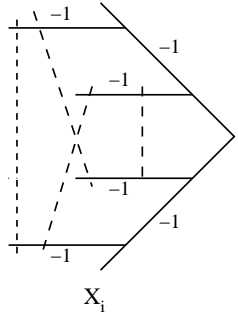}}
\caption{the four new components $X_1,\ldots,X_4$}
\end{figure}

\noindent
In particular, $X$ is D-semistable except at one point, where $X_1,X_2,X_3,X_4$ all intersect.  
Here $X$ is locally the cone over a cycle of four lines in $\bP^3$, e.g.,
$$\{x=z=0\}\cup \{y=z=0\} \cup \{y=x+z-1=0\} \cup \{x=y+z-1\}$$
which is a complete intersection
$$\{xy=z(x+y+z-1)=0\}$$
thus Gorenstein.  Each line avoids this point and is cut out transversally in $X$.  
These singularities are mild as well.

\

From this analysis, we deduce
\begin{prop}
The action of $W(\sD_5)$ on the moduli space of marked quartic Del Pezzo surfaces
extends naturally to a regular action on $\wpM$. The distinguished subgroup
$$(\bZ/2\bZ)^4=\ker(W(\sD_5) \ra \fS_5)$$
acts via automorphisms on the universal family.  The induced action of $\fS_5$
on $\wpM$ coincides with the standard relabelling action on $\oM_{0,5}$.  
\end{prop}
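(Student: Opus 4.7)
The plan is to define the $W(\sD_5)$-action on $\wpM$ using its moduli-functor description, and then to determine the structure of the stabilizers using the classical theory of automorphisms of smooth quartic Del Pezzo surfaces. First, I observe that $\wpM$ parameterizes stable log pairs $(X,(D_1,\ldots,D_{16}))$ with ordered boundary divisors, so the Weyl group $W(\sD_5)$---embedded in $\fS_{16}$ via its faithful permutation action on the $16$ $(-1)$-curves of a smooth quartic Del Pezzo---acts on $\wpM$ by the rule $\sigma\cdot(X,(D_1,\ldots,D_{16}))=(X,(D_{\sigma(1)},\ldots,D_{\sigma(16)}))$. This is a well-defined regular action because the stable log pair conditions (semi log canonical singularities and ampleness of $K_X+\sum D_i$) are symmetric in the boundary divisors, and it restricts to the tautological action on the open locus of marked smooth Del Pezzo surfaces, which we identify with $M_{0,5}\subset \wpM$.

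Next, to isolate the kernel action, I would compute the stabilizer at a general marked point $[X,D_\bullet]$ of this smooth locus: it equals the image of $\Aut(X)$ inside $W(\sD_5)$ under the natural faithful representation on the primitive Picard lattice. By classical results, $\Aut(X)\cong (\bZ/2\bZ)^4$ for a general smooth degree $4$ Del Pezzo, realized by sign changes in a simultaneous diagonalization of the defining pencil of quadrics; moreover, this image is precisely $\ker(W(\sD_5)\to \fS_5)$, because the $\fS_5$ quotient permutes the five singular members of the pencil (equivalently, the five blow-up points on $\bP^2$), a permutation not realizable by any automorphism of a generic $X$. Thus $(\bZ/2\bZ)^4$ acts on the universal family by fiberwise automorphisms over the smooth locus; by properness of the universal family over $\wpM$ and separatedness of its relative automorphism scheme, this action extends uniquely to fiberwise automorphisms of the total family.

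For the third assertion, the induced $\fS_5$-action on $\wpM\cong \oM_{0,5}$ is faithful, and I would match it with the standard relabeling action via the classical correspondence between a marked smooth quartic Del Pezzo and an ordered $5$-tuple of points on $\bP^1$---namely the spectral cover, given by the roots of the binary quintic form \eqref{eqn.f} associated with the pencil of quadrics. This yields an $\fS_5$-equivariant isomorphism of the smooth locus of $\wpM$ with $M_{0,5}$, which propagates to an $\fS_5$-equivariant isomorphism $\wpM\cong\oM_{0,5}$ by density of the smooth locus and separatedness of $\oM_{0,5}$.

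The principal point requiring care is the computation of the generic stabilizer as exactly $(\bZ/2\bZ)^4$ and its coincidence with $\ker(W(\sD_5)\to\fS_5)$; once these are established, the extensions across the boundary are formal consequences of density and separatedness, and no explicit calculation on the degenerate fibers described by \cite{HKT} is required.
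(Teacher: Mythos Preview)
Your approach is essentially correct but takes a different route from the paper. The paper offers no argument beyond the phrase ``From this analysis, we deduce,'' where ``this analysis'' is the explicit enumeration of the degenerate log surfaces over the codimension-one and codimension-two boundary strata of $\oM_{0,5}$. From that description one reads off directly that each degenerate fiber carries a $(\bZ/2\bZ)^4$ worth of automorphisms permuting the sixteen limit lines in the required way, and that the residual $\fS_5$ permutes the five components of the combinatorial data exactly as it permutes the marked points. Your argument instead leverages density of $M_{0,5}$ in $\wpM$ together with the fact (imported from \cite{HKT}) that $\wpM\simeq\oM_{0,5}$ is a \emph{fine} moduli scheme, so that no explicit analysis of the boundary is required. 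This is a genuine simplification for the purposes of this proposition, though the paper needs the explicit boundary description anyway for later arguments (Propositions~\ref{prop.birational} and~\ref{prop.mu2action}).

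One point deserves tightening. You invoke ``separatedness of the relative automorphism scheme'' to extend the fiberwise $(\bZ/2\bZ)^4$-action across the boundary, but separatedness yields only uniqueness, not existence. The clean way to close this is to use that $\wpM$ is a fine moduli scheme: the marked log pairs therefore have trivial automorphism groups, so for $\sigma\in(\bZ/2\bZ)^4$ the Isom scheme $\Isom_{\wpM}\bigl((\mathcal{Y},\mathcal{D}_\bullet),(\mathcal{Y},\mathcal{D}_{\sigma\bullet})\bigr)$ is a finite scheme over $\wpM$ with singleton geometric fibers (nonempty because $\sigma$ acts trivially on $\wpM$, which you have already shown by density). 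Zariski's Main Theorem over the normal base $\wpM\cong\oM_{0,5}$ then gives the global section, i.e., the desired automorphism of the universal family. With this adjustment your argument is complete.
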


\begin{defi}
\label{defn.Mtilde}
Let $\wM=[\wpM/W(\sD_5)]$ denote the moduli stack of stable log surfaces
as above,
with {\em unordered} boundary divisors.
\end{defi}
This comes with a morphism
\begin{equation}
\label{eqn.Mtildetosomething}
\widetilde{\mathcal{M}}\cong
[\overline M_{0,5}/W(\sD_5)]\to
[\overline M_{0,5}/\fS_5]
\end{equation}
fitting into a fiber diagram with the morphism of classifying stacks
$BW(\sD_5)\to B\fS_5$.
In particular, $\wM$ is proper with coarse moduli space $\bP(1,2,3)$.
The morphism \eqref{eqn.Mtildetosomething} restricts to a morphism
\begin{equation}
\label{eqn.Msmtosomething}
\mathcal{M}^{\mathrm{sm}}\cong [M_{0,5}/W(\sD_5)]\to [M_{0,5}/\fS_5]\cong
[U^\circ/\PGL_2].
\end{equation}

\section{Comparison of moduli spaces}
\label{comparison}

We start by recalling a well-known geometric description of
torsors under symmetric and hyperoctahedral groups.
Let $n$ be a positive integer, and let $T$ be a scheme.
There is an equivalence
$$
\mathcal{C}ov^n(T) \cong B\fS_n(T)
$$ 
between the category of degree-$n$ \'etale covers of $T$, and the category of 
$\fS_n$-torsors over $T$, where $\fS_n$ denotes the symmetric group on
$n$ letters, sending
an \'etale cover $U\ra T$ to the open subscheme of
the $n$-fold fiber product $U^{\times n}$ over $T$ of
pairwise distinct points, and in the other direction associating to an $\fS_n$-torsor
$E\ra T$ the cover $E\times^{\fS_n}\{1,\ldots,n\}$.

There is a similar equivalence
\[
\mathcal{C}ov^{2,n}(T) \cong BW(\sB_n)(T)
\]
between the category of towers of \'etale coverings
$V\to U\to T$ with $V\to U$ of degree 2 and $U\to T$ of degree $n$ and the category of
torsors under the hyperoctahedral
group $W(\sB_n)=(\Z/2\Z)^n\rtimes \fS_n$.
For torsors under the index 2 subgroup $W(\sD_n)\subset W(\sB_n)$
(the type $\sD$ Weyl group)
there is a description
via towers $V\to U\to T$ together with a section of
$E/W(\mathsf D_n)\to T$, where $E$ denotes the associated
$W(\sB_n)$-torsor.
If $n$ is \emph{odd},
then the involution of $V\to U$
induces the involution of
the degree 2 cover $E/W(\mathsf D_n)\to T$.

The next result uses the above language to make
explicit the morphism \eqref{eqn.Msmtosomething}.

\begin{prop}
\label{prop.compatibility}
\emph{(i)} Let $T$ be a scheme and $T\to [U^\circ/\PGL_2]$
a morphism, corresponding to the cover $D\subset \bP(E)\to T$ for
a rank 2 vector bundle $E$ on $T$.
Then $D\to T$ is canonically identified with
the degree 5 cover associated with the composite
\[T\to [U^\circ/\PGL_2]\cong [M_{0,5}/\fS_5]\to B\fS_5.\]
\emph{(ii)} Let $T$ be a scheme and $\pi\colon \mathcal{X}\to T$ a smooth family of
Del Pezzo surfaces of degree 4, corresponding to a morphism
$T\to \mathcal{M}^{\mathrm{sm}}$.
Let 
$$
D\subset \bP(\pi_*(\mathcal{I}_{\mathcal{X}}(2)))\to T
$$ 
be the
spectral cover, parametrizing singular members of the pencils of
quadric hypersurfaces,
and $\tilde{D}\to D$ the degree 2 cover of singular quadric hypersurface with
family of rulings, cf.\ \cite[\S 3]{HT13}.
Then 
$$
\tilde{D}\to D\to T
$$ 
is canonically identified with the
tower of coverings associated with the composite morphism
\[T\to \mathcal{M}^{\mathrm{sm}}\cong [M_{0,5}/W(\sD_5)]\to BW(\sD_5).\]
\emph{(iii)} Let $T$ be a scheme,
$\pi\colon \cX\to T$ a smooth family of Del Pezzo surfaces of degree 4
with family of lines $\mathcal{L}\subset \cX$,
corresponding to a morphism
$T\to [U^\circ/\PGL_2]\cong
\mathcal{M}'^\circ\times_{\mathcal{M}^\circ}\mathcal{M}^{\mathrm{sm}}$.
Then with the notation of \emph{(ii)} we obtain a canonical identification
$\tilde D\cong D\times \Z/2\Z$
by labeling with $0$ the family of planes containing the linear span
of the vertex of a singular quadric and the chosen line of $\mathcal{L}$
and with $1$ the opposite family.
\end{prop}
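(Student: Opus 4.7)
The plan for all three parts is to trace through the categorical equivalences stated just above the proposition, matching the concrete towers of \'etale covers with the tautological towers associated with morphisms to $B\fS_5$ and $BW(\sD_5)$.

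For (i), I would start by recalling that under the equivalence $\mathcal{C}ov^5(T)\cong B\fS_5(T)$, the degree-$5$ cover associated to an $\fS_5$-torsor $E$ is $E\times^{\fS_5}\{1,\ldots,5\}$. The isomorphism $[U^\circ/\PGL_2]\cong [M_{0,5}/\fS_5]$ established in Section \ref{quintic} sends a binary quintic form $f$ on $\bP(E)$ with nonvanishing discriminant to the unordered $5$-tuple of its roots; the associated $\fS_5$-torsor records orderings of this tuple, and the induced degree-$5$ cover consists of the five roots themselves. On the other hand, the spectral-cover construction of Section \ref{spectralcover} (see in particular \eqref{eqn:composition}) defines $D\subset \bP(E)$ as the zero scheme of $f$. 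So both degree-$5$ covers equal this zero scheme, and the identification is canonical.

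For (ii), I would factor through (i) using the projection $W(\sD_5)\to \fS_5$, which identifies the composite $\mathcal{M}^{\mathrm{sm}}\to BW(\sD_5)\to B\fS_5$ with $\mathcal{M}^{\mathrm{sm}}\to [U^\circ/\PGL_2]\to B\fS_5$. Part (i) then identifies the $\fS_5$-part of the associated tower with $D\to T$. What remains is to match the degree-$2$ cover $\tilde D\to D$ with the further $(\bZ/2\bZ)^4$-data carried by the $W(\sD_5)$-torsor. Under the HKT identification $\mathcal{M}^{\mathrm{sm}}\cong [M_{0,5}/W(\sD_5)]$ recalled in Section \ref{loggeneraltype}, the Weyl group $W(\sD_5)$ acts on the $16$ lines of a smooth Del Pezzo of degree $4$ in the standard way, and the kernel $(\bZ/2\bZ)^4$ over $\fS_5$ is precisely the group that flips the two choices of ruling on each of the five singular quadrics subject to an even-parity constraint. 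This matches the double cover $\tilde D\to D$, and because both structures are canonical and functorial in $T$, it suffices to verify the matching on one geometric fiber; a rigidity argument over a connected base extends it universally.

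For (iii), the line $\mathcal{L}=\bP(V)$ yields a canonical trivialization of $\tilde D\to D$ as follows. Fix a singular quadric $Q$ in the pencil, with vertex $v$. Since $L\subset X$ is a nonsingular line in the smooth fiber, $v\notin L$, so the subspace $\langle v\rangle + V\subset k^5$ is three-dimensional. Because $Q$ is a cone with vertex $v$ and $L\subset Q$, the projective plane $\bP(\langle v\rangle + V)$ lies inside $Q$, passes through $v$, and therefore belongs to exactly one of the two ruling families on $Q$. Labeling this family $0$ and the other $1$ defines a section $D\to\tilde D$, giving the stated identification $\tilde D\cong D\times\bZ/2\bZ$. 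Its compatibility with (ii) follows because the reduction of structure group from $W(\sD_5)$ to $\fS_5$ determined by the choice of $\mathcal{L}$ via the \'etale cover \eqref{etalecov} is exactly the stabilizer of this labeling convention.

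The main obstacle is (ii): although both the spectral tower $\tilde D\to D\to T$ and the HKT $W(\sD_5)$-torsor are canonical constructions from the family $\cX\to T$, seeing directly that the HKT $W(\sD_5)$-action matches the natural action on the spectral tower requires comparing two essentially different descriptions of $\mathcal{M}^{\mathrm{sm}}$. The cleanest route is to observe that both descriptions are uniquely characterized, up to canonical isomorphism, by the $W(\sD_5)$-action on the $16$ lines of a smooth Del Pezzo of degree $4$.
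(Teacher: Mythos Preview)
Your approach is correct and essentially the same as the paper's. The paper's proof is more terse: for (ii) it states the key geometric fact directly---each singular quadric in the pencil partitions the $16$ lines into two sets of four pairs of intersecting lines, and $W(\sD_5)$ acts on these five pairs of sets via the standard inclusion in the hyperoctahedral group---which is precisely the link you allude to but do not spell out; your ``rigidity'' reduction to a single fiber is then unnecessary once this pointwise identification is seen to be functorial. Parts (i) and (iii) are handled identically in substance, with your (iii) giving a fuller unpacking of what the paper records in one sentence.
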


\begin{proof}
The first assertion is clear. For the second assertion,
given a smooth Del Pezzo surface of degree 4,
each singular member of the pencil of quadric hypersurfaces
determines a partition of the set of 16 lines into
two disjoint sets each consisting of 4 pairs of intersecting lines.
Then $W(\sD_5)$ acts on the 5 such pairs of sets via the standard inclusion in the
hyperoctahedral group $(\Z/2\Z)^5\rtimes \fS_5$, cf.\ \cite{KST}, and the
assertion is clear.
For the third assertion, we merely recall that
the choice of family of lines $\mathcal{L}$
dictates a distinguished plane in every
singular member of the pencil of quadrics, namely the linear span with the
vertex.
\end{proof}

\begin{coro}
\label{cor.otherwaycompatible}
The morphism \eqref{eqn.scnofMsmtoUcircmodPGL2} fits into a fiber diagram
\[
\xymatrix{
[U^\circ/\PGL_2]\ar[r] \ar[d] & B\fS_5 \ar[d]
\\
\mathcal{M}^{\mathrm{sm}} \ar[r] & BW(\sD_5)
}
\]
with the right-hand morphism induced by
$\fS_5\subset W(\sD_5)$.
\end{coro}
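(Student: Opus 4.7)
The plan is to interpret both legs of the intended cartesian square in terms of the spectral data of Proposition \ref{prop.compatibility}, and then identify the fiber product with $[U^\circ/\PGL_2]$ by a direct comparison. For a test scheme $T$, a $T$-point of $\mathcal{M}^{\mathrm{sm}}\times_{BW(\sD_5)}B\fS_5$ is a smooth family $\pi\colon\cX\to T$ of degree 4 Del Pezzo surfaces together with a reduction of the associated $W(\sD_5)$-torsor to an $\fS_5$-torsor. Applying Proposition \ref{prop.compatibility}(i) and (ii), this reduction is equivalent to an $\fS_5$-equivariant trivialization $\tilde D\cong D\times \bZ/2\bZ$ of the double cover of the spectral cover over the degree 5 cover $D\to T$.

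Next, Proposition \ref{prop.compatibility}(iii) says that a family $\mathcal{L}\subset\cX$ of nonsingular lines produces exactly such a trivialization, by labelling the family of planes containing the span of $\mathcal{L}$ and the vertex of each singular quadric with $0$ and the opposite family with $1$. This furnishes a canonical morphism
\[
[U^\circ/\PGL_2]\longrightarrow \mathcal{M}^{\mathrm{sm}}\times_{BW(\sD_5)}B\fS_5
\]
over $\mathcal{M}^{\mathrm{sm}}$ that fits into the square, since forgetting the $\bZ/2\bZ$-labelling and retaining only the tower $\tilde D\to D\to T$ recovers the map $\mathcal{M}^{\mathrm{sm}}\to BW(\sD_5)$ of Proposition \ref{prop.compatibility}(ii).

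To verify this morphism is an isomorphism, I would compare the two sides as finite étale covers of $\mathcal{M}^{\mathrm{sm}}$. The left-hand side is representable and finite étale of degree $16$ by \eqref{etalecov}, since a smooth Del Pezzo of degree 4 carries exactly $16$ lines. The right-hand side is finite étale of the same degree $[W(\sD_5):\fS_5]=1920/120=16$, because $B\fS_5\to BW(\sD_5)$ is finite étale of this index. A morphism between finite étale covers of $\mathcal{M}^{\mathrm{sm}}$ of the same constant degree is an isomorphism as soon as it is surjective on geometric points, so it suffices to check that every $\fS_5$-equivariant trivialization of $\tilde D\to D$ over a geometric fiber arises from a line on the corresponding smooth Del Pezzo.

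The substantive content here is the classical identification of the 16 lines on a smooth Del Pezzo of degree 4 with the coset space $W(\sD_5)/\fS_5$ as $W(\sD_5)$-sets, with $\fS_5\subset W(\sD_5)$ appearing as the stabilizer of a chosen line in the Weyl representation of type $\sD_5$. Granted this, the 16 trivializations attached to the 16 lines account for all possible reductions, which yields surjectivity. The main subtlety I anticipate is a bookkeeping check that the inclusion $\fS_5\hookrightarrow W(\sD_5)$ implicit in Proposition \ref{prop.compatibility}(iii) agrees with the one used in the statement; this, however, is built into the construction of the torsors there, as the natural $W(\sD_5)$-action on the set of $\bZ/2\bZ$-labellings of the pencil of planes of singular quadrics is precisely the action on $W(\sD_5)/\fS_5$.
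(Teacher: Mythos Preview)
Your argument is correct and follows the route the paper implicitly intends: the corollary is stated without proof because it is meant to be read off directly from Proposition~\ref{prop.compatibility}, together with the classical identification of the $16$ lines on a smooth quartic Del Pezzo with the $W(\sD_5)$-set $W(\sD_5)/\fS_5$. Your construction of the comparison map via part~(iii), the degree-$16$ count on both sides, and the surjectivity check are exactly the steps one expects.

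One point of phrasing deserves tightening. In your first paragraph you assert that a reduction of the $W(\sD_5)$-torsor to $\fS_5$ is ``equivalent to an $\fS_5$-equivariant trivialization $\tilde D\cong D\times\bZ/2\bZ$.'' This is not quite right as stated: over a geometric point there are $16$ reductions but a trivialization of the double cover $\tilde D\to D$ amounts to a section of $\tilde D\to D$, of which there are $32$ (only half, those of the correct parity for the given $W(\sD_5)$-structure, correspond to reductions). The qualifier ``$\fS_5$-equivariant'' does not have a clear meaning here, since there is no $\fS_5$-action on $D$ over a general base. Fortunately this description is only motivational in your write-up: you construct the morphism directly from part~(iii) of the proposition (which does give a section of $\tilde D\to D$, hence a reduction), and then the degree comparison together with the transitivity of $W(\sD_5)$ on $W(\sD_5)/\fS_5$ finishes the argument. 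So nothing in the logical chain depends on the imprecise sentence; I would simply delete or rephrase it. In fact, once you observe that both covers are connected (the fiber product because the universal monodromy on lines is the full Weyl group), the degree argument alone forces the map to be an isomorphism and the separate surjectivity check becomes redundant.
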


\begin{prop}
\label{gerbeprop2}
\emph{(i)} There is a canonical \'etale representable morphism
\[
[ \overline{M}_{0,5} / \fS_5 ] \times _{ [ U / \PGL_2 ]}
\mathcal{M} \to
\widetilde{\mathcal{M}},
\]
extending the identity morphism on smooth families of
degree 4 Del Pezzo surfaces.

\noindent
\emph{(ii)} The morphism
$[ \overline{M}_{0,5} / \fS_5 ]\to
[ \overline{M}_{0,5} / \fS_5 ] \times _{ [ U / \PGL_2 ]}
\mathcal{M}$
determined by the section \eqref{eqn.scnofMtoUmodPGL2},
composed with the morphism in \emph{(i)}, is canonically 2-isomorphic to the morphism
\[
[ \overline{M}_{0,5} / \fS_5 ] \to
[\overline{M}_{0,5}/W(\sD_5)] \cong \widetilde{\mathcal{M}}
\]
coming from $\fS_5\subset W(\sD_5)$.
\end{prop}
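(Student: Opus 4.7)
For part (i), the plan is to construct the morphism by producing a canonical family of stable log surfaces over $Y := [\overline{M}_{0,5}/\fS_5] \times_{[U/\PGL_2]} \mathcal{M}$ and invoking the universal property of $\widetilde{\mathcal{M}}$. The first projection $Y \to \mathcal{M}$ pulls back the universal Del Pezzo family $\mathcal{X} \to Y$, whose total space has only ordinary double points as singularities by Corollary~\ref{gerbecor}, concentrated in fibers over the singular Del Pezzo locus. The second projection $Y \to [\overline{M}_{0,5}/\fS_5]$ supplies, étale locally, a stable $5$-pointed genus-zero curve compatible with the spectral cover---equivalently, a prescription for the stable-reduction direction at each node. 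Using this datum we perform the corresponding canonical reduction of $\mathcal{X}$: smooth fibers are kept together with their $16$ lines, while each singular fiber is replaced by the six-component model (over the $1$-dimensional boundary stratum) or the twelve-component model (over the $0$-dimensional boundary stratum) of Section~\ref{loggeneraltype}, with the added bubble components labelled by the second projection. The resulting flat family of stable log surfaces determines the desired morphism $Y \to \widetilde{\mathcal{M}}$.

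Étaleness and representability are checked stratum-by-stratum. On the smooth Del Pezzo locus the morphism restricts to the identity on $\mathcal{M}^{\mathrm{sm}} \cong [M_{0,5}/W(\sD_5)] = \widetilde{\mathcal{M}}^{\mathrm{sm}}$, via the identifications of Corollary~\ref{cor.otherwaycompatible}. Over the singular Del Pezzo locus, both source and target acquire a $(\Z/2\Z)^4$-stabilizer: on the $Y$-side, the $\mu_2$ from the order-$2$ root stack structure of $[\overline{M}_{0,5}/\fS_5] \to [U/\PGL_2]$ along the discriminant (cf.\ the proof of Lemma~\ref{M05modS5}) combines with the order-$8$ automorphism group of a generic $\sA_1$-Del Pezzo; on the $\widetilde{\mathcal{M}}$-side, it is the $W(\sD_5)/\fS_5$ banding of the gerbe $\widetilde{\mathcal{M}} \to [\overline{M}_{0,5}/\fS_5]$. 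Direct comparison shows that the induced maps on inertia match, yielding representability; smoothness (Lemma~\ref{lem.smoothexpdim}) together with the equality of relative dimensions then forces étaleness.

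For part (ii), both compositions give 1-morphisms $[\overline{M}_{0,5}/\fS_5] \rightrightarrows \widetilde{\mathcal{M}}$ of smooth proper Deligne--Mumford stacks. By the explicit description of the section~\eqref{eqn.scnofMtoUmodPGL2} in Theorem~\ref{thm.stackthm}---it assigns to a binary quintic the blow-up of the plane along the associated subscheme of the conic---and by Proposition~\ref{prop.compatibility} matching the spectral cover datum with the $[\overline{M}_{0,5}/\fS_5]$-marking, the two morphisms agree canonically on the dense open substack $[M_{0,5}/\fS_5]$. Density, separatedness of the diagonal of $\widetilde{\mathcal{M}}$, and the valuative criterion extend the 2-isomorphism uniquely to all of $[\overline{M}_{0,5}/\fS_5]$.

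The main obstacle is making the stable-reduction step of (i) rigorous and functorial: one must verify that at each type of boundary stratum of $\overline{M}_{0,5}$, the local replacement of $\mathcal{X}$ by an HKT bubble model reproduces exactly the universal family of $\widetilde{\mathcal{M}}$. Concretely, one must check that our construction applied to a family acquiring a single $\sA_1$-Del Pezzo yields the six-component model, and that on a family with colliding $\sA_1$-singularities it yields the twelve-component model, with the combinatorial choices recorded by the $\overline{M}_{0,5}$-data. The D-semistability verified in Section~\ref{loggeneraltype} guarantees the output is a valid stable log surface family throughout.
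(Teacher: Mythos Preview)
Your approach to (i) is genuinely different from the paper's, and the difference matters. The paper does \emph{not} attempt to construct the full family of stable log surfaces over all of $Y=[\overline{M}_{0,5}/\fS_5]\times_{[U/\PGL_2]}\mathcal{M}$. Instead it exploits the fiber square $\widetilde{\mathcal{M}}\cong[\overline{M}_{0,5}/\fS_5]\times_{B\fS_5}BW(\sD_5)$: a morphism $Y\to\widetilde{\mathcal{M}}$ over $[\overline{M}_{0,5}/\fS_5]$ is the same as a $W(\sD_5)$-torsor on $Y$ lifting the given $\fS_5$-torsor. One already has such a torsor over the dense open $\mathcal{M}^{\mathrm{sm}}\subset Y$. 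By Nagata--Zariski purity for finite \'etale covers of regular stacks, it suffices to extend it off a codimension-$2$ locus, so one may work over the substack of Del~Pezzo surfaces with at most one $\sA_1$-singularity; there the blow-up recipe (blow up the ordinary double point, then the four $(-1)$-curves through it) is short and manifestly functorial. The deepest boundary stratum---where your twelve-component model would live---is never touched: purity extends the torsor across it for free. \'Etaleness and representability are then automatic, since both $Y$ and $\widetilde{\mathcal{M}}$ are \'etale over $[\overline{M}_{0,5}/\fS_5]$ and the morphism is constructed over that base.

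The ``main obstacle'' you flag is therefore a real gap, and it is precisely what the paper's purity argument is designed to avoid. Carrying out your program would require producing the HKT twelve-component model functorially from a degenerating family of $\sA_1$-Del~Pezzos---essentially redoing part of \cite{HKT}---whereas the proposition needs nothing of the sort. Your stratum-by-stratum check of representability and \'etaleness is also only heuristic: matching orders of inertia groups is necessary but not sufficient for representability (one needs injectivity on automorphism groups of each object), and smoothness of source and target together with equal dimension does not by itself yield \'etaleness of a morphism between them. For (ii), your density-and-separatedness argument is in the same spirit as the paper's, which again phrases the extension via purity and Corollary~\ref{cor.otherwaycompatible}.
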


\begin{proof}
For both statements, we use Nagata-Zariski purity,
(cf.\ \cite[Cor.\ X.3.3]{SGA1}) which tells us that the
restriction functor, from finite \'etale covers of a
regular locally noetherian scheme $X$ to
covers of a dense open subscheme $U$ is fully faithful, and
is an equivalence of categories if $X\smallsetminus U$ is of
codimension at least $2$.
The statement is equally valid if $X$ is an algebraic stack.
For (i), by the fiber diagram mentioned just after Definition \ref{defn.Mtilde}
it suffices to show that the tautological $W(\sD_5)$-torsor over
$\mathcal{M}^{\mathrm{sm}}\cong [M_{0,5}/W(\sD_5)]$
extends to a $W(\sD_5)$-torsor over the complement of a codimension 2
substack of
$[ \overline{M}_{0,5} / \fS_5 ] \times _{ [ U / \PGL_2 ]}
\mathcal{M}$.
We consider the substack of Del Pezzo surfaces with at most one
$\sA_1$-singularity.
Then the recipe to produce a log canonical model is to
blow up the singular locus described in Corollary \ref{gerbecor},
then to blow up the $(-1)$-curves in the fibers meeting the
relative singular locus.
The proof of (ii) is similar, using
Corollary \ref{cor.otherwaycompatible}.
\end{proof}

By Proposition \ref{gerbeprop2}(i) we have a 2-commutative diagram with
cartesian square.
\begin{equation}
\begin{split}
\label{eqn.diagr}
\xymatrix@C=10pt{
& [ \overline{M}_{0,5} / \fS_5 ] \times _{ [ U / \PGL_2 ]}
\mathcal{M}
\ar[r] \ar[dl] \ar[d] &
\mathcal M \ar[d] \\
\widetilde{\mathcal{M}}\cong [\overline{M}_{0,5}/W(\sD_5)]
\ar[r] &
[ \overline{M}_{0,5} / \fS_5 ] \ar[r] &
[ U / \PGL_2 ]
}
\end{split}
\end{equation}
Given a family of Del Pezzo surfaces of degree 4 over a regular base
whose discriminant divisor is a multiple of 2,
Proposition \ref{gerbeprop2}(i) supplies a family of log general type surfaces,
which we call the \emph{associated family} of log general type surfaces.

\begin{prop}
\label{prop.birational}
Let $R$ be a discrete valuation ring over $k$ with algebraically closed
residue field, and let
$\cX\to \Spec(R)$ and $\cX'\to \Spec(R)$ be generically smooth
families of Del Pezzo surfaces of degree 4, each with
central fiber having a single $\sA_1$-singularity and with
discriminant of valuation $2$.
Let
\[\widetilde{\cX}=\Bl_{4\,\mathrm{lines}}(\Bl_{\cX^{\mathrm{sing}}}(\cX))\]
where we recognize by the hypotheses that $\cX$ has one singular point,
an ordinary double point, which is the center of the first blow-up,
and where the second blow-up is along the proper transform of
the four lines in the central fiber meeting $\cX^{\mathrm{sing}}$.
Define $\widetilde{\cX}'$ similarly, and let
\[\varphi\colon \widetilde{\cX}\to \widetilde{\cX}'\]
be an isomorphism over $\Spec(R)$.
Then the rational map
\[ \cX \dashrightarrow \cX' \]
induced by $\varphi$ is a morphism when $\varphi$ sends exceptional
divisors to exceptional divisors and is only a rational map otherwise.
\end{prop}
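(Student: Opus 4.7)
\emph{Plan.} The strategy is to reformulate the question as whether the composite $\psi := \pi' \circ \varphi \colon \widetilde{\cX} \to \cX'$ factors through the contraction $\pi \colon \widetilde{\cX} \to \cX$; the induced rational map $\cX \dashrightarrow \cX'$ extends to a morphism precisely when such a factorization exists. Since $\cX$ is normal (its only singularity is an ordinary double point, which is analytically the cone over a smooth quadric) and $\pi$ is a proper birational morphism with connected fibers, the rigidity lemma reduces the factorization question to checking that $\psi$ is constant on every fiber of $\pi$. First I would identify these fibers: away from the central fiber $\pi$ is an isomorphism; over a general point of one of the four lines $L_i \subset X_0$ through the node, the fiber is the $\bP^1$-ruling of the exceptional divisor $F_i$; and over the node of $\cX$ itself, the fiber is the proper transform $E$ of the exceptional $\bP^1\times\bP^1$ produced by the first blow-up.

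For the ``if'' direction, assume $\varphi$ sends exceptional divisors of $\pi$ to exceptional divisors of $\pi'$. Since $Y := \widetilde{X}_0$ is the unique non-exceptional component of the central fiber of $\widetilde{\cX}$, this forces $\varphi(Y) = Y'$; and because the Picard ranks of $E$ (a blow-up of $\bP^1 \times \bP^1$ at four points, rank $6$) and of each $F_i \cong \bP^1\times \bP^1$ (rank $2$) differ, $\varphi$ must map $E$ to $E'$ and permute the $F_i$. The main technical step is to verify that $\varphi$ preserves the distinguished $\bP^1$-bundle ruling of each $F_i$; I plan to do this numerically, by observing that the ruling fiber class on $F_i$ is represented by the curve $F_i \cap E$ (the exceptional $\bP^1$ over the point where $L_i$ meets the original quadric), which $\varphi$ sends to $F'_{\sigma(i)} \cap E'$, again a ruling fiber. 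Given this, each $\bP^1$-ruling of $F_i$ maps via $\psi$ to a point on $L'_{\sigma(i)}$, and $E$ maps to the node of $\cX'$, so $\psi$ is constant on every fiber of $\pi$ and descends to the desired morphism $\cX \to \cX'$.

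For the converse, suppose $\varphi$ does not preserve the set of exceptional divisors. Then $\varphi(Y) \ne Y'$, and by counting components some exceptional divisor $D \in \{E, F_1,\dots, F_4\}$ satisfies $\varphi(D) = Y'$. Then $\psi(D) = \pi'(Y') = X'_0$ has dimension $2$, while $\pi(D)$ is either the node of $\cX$ or one of the lines $L_i$, hence of dimension at most $1$. If the rational map extended to a morphism $\bar f \colon \cX \to \cX'$, then by separatedness of $\cX'$ the two morphisms $\bar f \circ \pi$ and $\psi$ would agree (they coincide on the dense open where $\pi$ is an isomorphism), giving $\bar f(\pi(D)) = X'_0$, which is impossible since morphisms of schemes cannot increase the dimension of their images. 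The main obstacle in the plan is the ruling-preservation check in the ``if'' direction, but the intersection $F_i\cap E$ pins down the ruling cleanly.
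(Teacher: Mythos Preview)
Your proof is correct and takes a genuinely different route from the paper's.

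The paper argues the ``if'' direction in one line: the complement of the exceptional divisors in $\widetilde{\cX}$ is identified via $\pi$ with $\cX\smallsetminus\{4\ \text{lines}\}$, so if $\varphi$ preserves exceptional divisors it induces an isomorphism $\cX\smallsetminus\{4\ \text{lines}\}\to \cX'\smallsetminus\{4\ \text{lines}\}$ between opens with codimension-$2$ complement, and this extends since $\cX$ is normal. For the converse the paper simply observes that an isomorphism $\cX\to\cX'$ lifts to an isomorphism of the blow-ups carrying exceptional components to exceptional components.

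Your approach instead works entirely upstairs on $\widetilde{\cX}$: you recast the problem as a factorization question for $\psi=\pi'\circ\varphi$ through $\pi$, reduce via the rigidity/Stein-factorization principle to constancy of $\psi$ on the fibers of $\pi$, and then verify this fiber-by-fiber after pinning down $\varphi(E)=E'$ and the permutation of the $F_i$ by Picard rank and checking ruling preservation through the curves $F_i\cap E$. Your converse is a clean dimension count rather than a lifting argument.

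What each buys: the paper's argument is shorter but leans on the principle that an isomorphism between codimension-$2$ opens of normal varieties extends, which here is justified because both $\cX$ and $\cX'$ are anticanonically embedded in $\bP^4_R$ (so sections of $\omega^{-1}$ extend across codimension $2$ and determine the closure); this is implicit in the paper. Your argument avoids that subtlety entirely by never leaving the category of morphisms, at the cost of a more explicit component-by-component analysis. One small imprecision in your write-up: the fiber of $\pi$ over the node is not just $E$ but $E$ together with four $\bP^1$'s (the fibers of the $F_i$ over the points where the $L_i$ met the quadric); however these extra curves are exactly the $F_i\cap E$, which you already show map to the node of $\cX'$, so the argument goes through unchanged.
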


\begin{proof}
Under the composite blow-down morphism $\widetilde{\cX}\to \cX$ the
pre-image of the complement of the four lines meeting $\cX^{\mathrm{sing}}$
is the complement of the exceptional divisors in $\widetilde{\cX}$.
If $\varphi$ sends exceptional divisors to exceptional divisors then
$\varphi$ induces an isomorphism
\[\cX\smallsetminus\{4\ \mathrm{lines}\}\to
\cX'\smallsetminus\{4\ \mathrm{lines}\}.\]
Since $\cX$ is normal, this extends to an isomorphism $\cX\to \cX'$.
Conversely, an isomorphism $\cX\to \cX'$ induces an isomorphism of
blow-ups sending exceptional components to exceptional components.
\end{proof}

\begin{prop}
\label{prop.mu2action}
Let $R$ be a strictly henselian discrete valuation ring over $k$
with $\varpi\in R$ a uniformizer, and let
\[ D\subset \bP^1_R \]
be a divisor, with $D$ isomorphic over $\Spec(R)$ to
\[ \Spec(R(\sqrt{\varpi}))\amalg \coprod_{i=1}^3 \Spec(R). \]
Let
\[ \iota\colon \bP^1_R\to \bP^2_R, \]
denote the Veronese embedding, and define
\[ \cX_0=\Bl_{\iota(D)}(\bP^2_R),\qquad\qquad
\cX=\Spec(R(\sqrt{\varpi}))\times_{\Spec(R)}\cX_0. \]
Define $\widetilde{\cX}$ as in Proposition \ref{prop.birational},
i.e., as the blow-up along four lines of the blow-up of the singular point
of $\cX$.
Notice that $\widetilde{\cX}$ has a unique exceptional divisors
isomorphic to the minimal resolution of an $\sA_1$-singular
Del Pezzo surface of degree $4$;
we call this the Del Pezzo exceptional divisor.
Let $\tau\colon R(\sqrt{\varpi})\to R(\sqrt{\varpi})$ be the nontrivial
Galois automorphism.
Then the automorphism $\Spec(\tau)\times \mathrm{id}_{\cX_0}$
of $\cX$
induces an automorphism of $\widetilde{\cX}$ which
maps the Del Pezzo exceptional divisor nontrivially to itself.
\end{prop}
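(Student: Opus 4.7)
The plan is to reduce to an explicit local computation near the unique singular point of $\cX$, identify the Del Pezzo exceptional divisor inside $\widetilde{\cX}$, and then read off the induced action of $\tau$. The main technical obstacle is to set up the local coordinates so that the $3$-fold ordinary double point takes a canonical form in which the swap of rulings on the exceptional quadric becomes transparent.

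First, I would choose affine coordinates $(X,Z)$ on $\bP^2_R$ near the double point of $\iota(D)$ so that the ideal of the length-$2$ component $\iota(\Spec(R(\sqrt{\varpi})))$ becomes $(Z, X^2-\varpi)$, using that at $\varpi=0$ the scheme is the first infinitesimal neighborhood of the conic at this point. Then $\cX_0$ is locally the hypersurface $Zt - X^2 + \varpi = 0$ in $\bA^4_R$. Writing $s = \sqrt{\varpi}$ and base changing to $R(\sqrt{\varpi})$, the local model of $\cX$ becomes
\[
Zt - X^2 + s^2 = 0,
\]
a $3$-fold ordinary double point on which $\Spec(\tau)\times \mathrm{id}_{\cX_0}$ acts by $s\mapsto -s$ while fixing $X,Z,t$.

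Next, I would blow up the origin. The exceptional divisor $Q$ is the smooth quadric $Zt = (X-s)(X+s)$ in $\bP^3$; the substitution $u = X-s$, $v = X+s$ gives $Zt = uv$, an explicit $\bP^1\times\bP^1$ whose two rulings are distinguished by this factorization. The involution $\tau$ exchanges $u$ and $v$, and hence swaps the two rulings of $Q$. The proper transform $\widetilde{X}_0$ of the central fiber, the minimal resolution of $X_0$, meets $Q$ along $\{s=0\}=\{u=v\}$, the diagonal of $\bP^1\times\bP^1$, which is the $(-2)$-curve on $\widetilde{X}_0$. Each of the four lines through the node of $X_0$ lifts to a $(-1)$-curve $L_i\subset \widetilde{X}_0$ meeting $Q$ at a single point on the diagonal. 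A normal bundle computation, using $N_{L_i/\widetilde{X}_0}\cong\cO_{L_i}(-1)$ together with $\widetilde{X}_0\cdot L_i = -Q\cdot L_i = -1$ (since $\widetilde{X}_0+Q$ is the principal divisor pulled back from $X_0$ in $\Bl_p(\cX)$), gives $N_{L_i/\Bl_p(\cX)}\cong \cO(-1)\oplus \cO(-1)$, so the four new exceptional divisors are each isomorphic to the Hirzebruch surface $\F_0$.

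Finally, the proper transform $\widetilde{Q}$ of $Q$ in $\widetilde{\cX}$ is $\bP^1\times\bP^1$ blown up at the four points where the $L_i$ meet the diagonal of $Q$; an intersection-theoretic check identifies $\widetilde{Q}$ as a weak Del Pezzo of degree $4$ whose unique $(-2)$-curve is the strict transform of the diagonal, i.e., the minimal resolution of an $\sA_1$-singular degree $4$ Del Pezzo. Since the four $\F_0$ exceptional divisors have Picard rank $2$, $\widetilde{Q}$ is the unique exceptional divisor of $\widetilde{\cX}\to\cX$ isomorphic to such a minimal resolution, and hence is the Del Pezzo exceptional divisor. The four blown-up points lie on the $\tau$-fixed diagonal, so $\tau$ extends to an involution of $\widetilde{Q}$; this involution exchanges the classes of the two rulings of $Q$ in $\Pic(\widetilde{Q})$, which are distinct elements, so the induced action on $\widetilde{Q}$ is nontrivial, as required.
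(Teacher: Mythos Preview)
Your proposal is correct and is precisely the detailed execution of the ``computation in formal local coordinates'' that the paper's proof invokes but does not write out: both identify the Del Pezzo exceptional divisor as (a blow-up of) the projectivized tangent cone $Q=\{Zt=X^2-s^2\}$ at the ordinary double point, and both observe that $\tau\colon s\mapsto -s$ swaps the two rulings of $Q$. Your additional verification of the normal bundle $N_{L_i/\Bl_p(\cX)}\cong\cO(-1)\oplus\cO(-1)$ and the identification of $\widetilde{Q}$ as a weak degree~$4$ Del Pezzo are welcome details that the paper leaves implicit.
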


\begin{proof}
Since the blow-ups have Galos-invariant centers,
the automorphism of $\cX$ induces an automorphism of $\widetilde{\cX}$
mapping exceptional divisors to exceptional divisors, hence the
Del Pezzo exceptional divisor to itself.
The Del Pezzo exceptional divisor is birational to the
projectivized normal cone to $\cX^{\mathrm{sing}}$.
A computation in formal local coordinates establishes the result.
\end{proof}

\section{Maps to $\mathcal{M}$ from covers of spectral curves}
\label{maps}

\begin{theo}
\label{thm.maptoM}
Let $k$ be a field of characteristic different from $2$. 
There is an equivalence of fibered categories over $k$-schemes between
\begin{itemize}
\item
families $\cX\to T\times \bP^1$
of degree 4 Del Pezzo surfaces (flat, proper, finitely presentated, with
fibers having $\mathsf{ADE}$-singularities) satisfying $(\mathrm{G1})$ and $(\mathrm{G2})'$
over all geometric points
of $T$, and
\item the fibered category whose fiber over $T$ consists of
$D\subset \F\to T\times \bP^1$ with $\F\to T\times \bP^1$ a $\bP^1$-bundle,
$D\to T$ smooth with $D\to T\times \bP^1$ finite flat of degree 5 and
satisfying $(\mathrm{G1})'$ and $(\mathrm{G2})'$
over all geometric points of $T$, together with
a section of the $2$-torsion of the relative Jacobian
$J(D/T)[2]\to T$,
\end{itemize}
given by the spectral cover construction together with the
fiberwise double cover of families of planes
in the singular quadric hypersurfaces.
\end{theo}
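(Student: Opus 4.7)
The plan is to produce a quasi-inverse to the spectral-cover-plus-double-cover functor, using the proper moduli of log general type surfaces $\wM$ from Section \ref{loggeneraltype} as a bridge. In the forward direction, given $\cX\to T\times\bP^1$ satisfying $(\mathrm{G1})$ and $(\mathrm{G2})'$, we associate the spectral cover $D\subset\bP(\pi_*(\cI_\cX(2)))$ of Section \ref{spectralcover} together with the double cover $\tilde D\to D$ of pairs (singular quadric in the pencil, family of rulings) from Proposition \ref{prop.compatibility}(ii). Condition $(\mathrm{G1})$ forces each singular fiber to have corank-one degeneration of the pencil, so $D\to T\times\bP^1$ is simply branched and $\tilde D\to D$ is \'etale; $(\mathrm{G2})'$ then forces $D/T$ to have geometrically integral fibers. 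The standard classification of \'etale double covers of a smooth connected curve via $H^1_{\mathrm{et}}(\,\cdot\,,\mu_2)$ identifies $\tilde D\to D$ with a 2-torsion section of $J(D/T)[2]\to T$.

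For the inverse functor, given $(D,\sigma)$, let $\tilde D\to D$ be the \'etale double cover associated to $\sigma$. Because $n=5$ is odd, the torsor discussion at the start of Section \ref{comparison} canonically turns the tower $\tilde D\to D\to T\times\bP^1$ into a $W(\sD_5)$-torsor over the complement $U^\circ\subset T\times\bP^1$ of the branch locus of $D$. Via the identification $\cM^{\mathrm{sm}}\cong[M_{0,5}/W(\sD_5)]$ (Section \ref{loggeneraltype}) this gives a morphism $\varphi^\circ\colon U^\circ\to\cM^{\mathrm{sm}}\hookrightarrow\wM$. Since $\wM$ is proper with normal coarse moduli $\bP(1,2,3)$ and $T\times\bP^1$ is regular, $\varphi^\circ$ extends uniquely to a morphism $\varphi\colon T\times\bP^1\to\wM$, yielding a family $\widetilde\cX\to T\times\bP^1$ of stable log surfaces. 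We then invert the birational contraction of Proposition \ref{prop.birational} fiberwise: on each singular fiber, blow down the $(-2)$-curve lying over the $\sA_1$-node together with the four $(-1)$-curves coming from the lines through the node. The outcome is the required Del Pezzo family $\cX\to T\times\bP^1$.

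It remains to check that the two functors are mutually quasi-inverse and fully faithful on morphisms. On smooth fibers this is Proposition \ref{prop.compatibility} together with Corollary \ref{cor.otherwaycompatible}. On $\sA_1$-singular fibers it follows from Proposition \ref{prop.birational} (the contraction is canonical and reversible on the exceptional locus) together with Proposition \ref{prop.mu2action}, which identifies the residual Galois action on the Del Pezzo exceptional divisor and rules out spurious automorphisms of the contracted family. Fully faithfulness on morphisms then reduces, by descent, to these statements plus the (quasi-)separatedness of the relevant stacks. The principal obstacle is the extension-contraction step: one must verify that the extended $\varphi$ never lands in the codimension-$2$ boundary stratum of $\wM$ described in Section \ref{loggeneraltype}, which would produce a log fiber worse than the one coming from a single $\sA_1$-singularity and obstruct the Del Pezzo contraction. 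This is guaranteed precisely by $(\mathrm{G1})'$, which forbids higher ramification in $D\to T\times\bP^1$ and so confines $\varphi$ to the smooth and codimension-one strata of $\wM$, where the contraction recipe of Proposition \ref{prop.birational} applies.
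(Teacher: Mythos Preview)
Your overall strategy---use $\wM$ as a bridge, build the log family from the $W(\sD_5)$-torsor associated to the tower $\tilde D\to D\to\bP^1$, then contract to a Del Pezzo family---is the same as the paper's. But there is a genuine gap in the extension step.

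You assert that since $\wM$ is proper and $T\times\bP^1$ is regular, the morphism $\varphi^\circ\colon U^\circ\to\wM$ extends to $\varphi\colon T\times\bP^1\to\wM$. This is false. Properness of a Deligne--Mumford stack gives extension across a DVR only after a ramified base change; on the nose, what extends is a morphism from a \emph{root stack}. Concretely, any morphism $T\times\bP^1\to\wM$ must factor through $[\overline{M}_{0,5}/\fS_5]$, and by Lemma~\ref{M05modS5} this root-stack morphism $[\overline{M}_{0,5}/\fS_5]\to[U/\PGL_2]$ forces the pullback of the discriminant $\Delta$ to be twice a divisor. Under $(\mathrm{G1})$ the discriminant is reduced, so no such $\varphi$ exists. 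The paper therefore works on the root stack $\cC=[\overline{M}_{0,5}/\fS_5]\times_{[U/\PGL_2]}\bP^1$: the $W(\sD_5)$-torsor and the log family $\cY\to\cC$ live there, not over $\bP^1$.

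This is not a cosmetic issue, because it is precisely on $\cC$ that the remaining argument takes place. Over each orbifold point of $\cC$ the log fiber has \emph{two} Del Pezzo-type components ($X_1$ and $X_2$ in the description of Section~\ref{loggeneraltype}), and one must choose one of them; this is encoded as a section of the degree-$2$ \'etale cover \eqref{eqn.etaleoverC}. Proposition~\ref{prop.mu2action} says the residual $\mu_2$ acts trivially on exactly one of the two components, which singles out a canonical section and hence a Del Pezzo family $\cX'\to\cC$. Only then, because $\mu_2$ acts trivially on the fibers of $\cX'$, does the family descend to $\cX\to\bP^1$. Your ``contract fiberwise'' step skips this choice and this descent. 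A secondary gap: the claim that the tower canonically yields a $W(\sD_5)$-torsor over $U^\circ$ (rather than just a $W(\sB_5)$-torsor) needs the observation that the associated $\bZ/2$-cover $E/W(\sD_5)\to U^\circ$ has trivial local monodromy at each branch point (simple branching in $\fS_5$, \'etale $\tilde D\to D$) and hence extends to a trivial cover of $\bP^1$; the paper does the analogous check on $\cC$.

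Finally, note that the paper avoids working in families altogether: it first observes that both fibered categories are \'etale over $\Hom_{(\mathrm{G1})'}(\bP^1,[U/\PGL_2])$, so the comparison morphism is \'etale and it suffices to check it is bijective on geometric points with trivial relative automorphisms. This lets one take $T=\Spec k$ with $k$ algebraically closed, which simplifies the root-stack and descent arguments considerably.
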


\begin{proof}
The functor in the forward direction is given as follows.
To a family $\pi\colon \cX\to T\times \bP^1$ we associate the
spectral cover
\[
D\subset \bP(\pi_*(\mathcal{I}_{\cX}(2)))\to T\times \bP^1
\]
which parametrizes singular members of the pencils of quadric hypersurfaces
of the fibers of $\pi$, together with the
section of $J(D/T)[2]\to T$ corresponding to the
double cover $\tilde D\to D$ of families of planes in the
singular quadrics.
Compatiblity of the construction with base change is obvious, so we have
a functor between the fibered categories.

This functor is a morphism between algebraic stacks that are \'etale over
$\Hom_{(\mathrm{G1})'}(\bP^1,[U/\PGL_2])$.
So the morphism is \'etale, and to verify that it is an isomorphism
it suffices to show that each geometric fiber consist of a single point,
with trivial stabilizer.
For this we may assume that $k$ is algebraically closed and that we are
given $D\subset \F\to \bP^1$ with $\F$ a Hirzebruch surface,
$D$ a nonsingular irreducible curve and $D\to \bP^1$ finite of degree 5 and simply ramified
over a divisor $\Delta\subset \bP^1$,
together with a $2$-torsion element of the Jacobian $J(D)$.
Let $\bP^1\to [U/\PGL_2]$ and $\tilde D\to D$ be the corresponding morphism,
respectively, cover.
Let
\[\mathcal{C}=[ \overline{M}_{0,5} / \fS_5 ] \times _{ [ U / \PGL_2 ]}\bP^1,\]
so we have the solid arrows of a diagram
which extends the diagram \eqref{eqn.diagr}:
\begin{equation}
\begin{split}
\label{eqn.diagr2}
\xymatrix@C=12pt{
\mathcal{C}
\ar[rrrr] \ar[ddrr] \ar@{-->}[dddr]
\ar@{-->}[ddr]
\ar@{-->}[drr]
&&&&
\bP^1\ar[ddl]
\\
&& [ \overline{M}_{0,5} / \fS_5 ] \times _{ [ U / \PGL_2 ]}
\mathcal{M}
\ar[r] \ar[dl]|(0.41)\hole \ar[d] &
\mathcal M \ar[d] \\
&\widetilde{\mathcal{M}}
\ar[r] \ar[d] &
[ \overline{M}_{0,5} / \fS_5 ] \ar[r] \ar[d] &
[ U / \PGL_2 ]
\\
& BW(\sD_5) \ar[r] & B\fS_5
}
\end{split}
\end{equation}
In the diagram, the lower square is a fiber square,
as mentioned just after Definition \ref{defn.Mtilde}.

By the discussion in the proof
of Lemma \ref{M05modS5}, $\mathcal{C}$
is a root stack over $\bP^1$ along $\Delta$.
So if we let $\mathcal{D}$ be the normalization of
$\mathcal{C}\times_{\bP^1}D$, then $\mathcal{D}$ is \'etale cover $\mathcal{C}$.
Setting $\tilde{\mathcal{D}}=\tilde D\times_D\mathcal{D}$, we have a tower
\begin{equation}
\label{eqn.tower}
\tilde{\mathcal{D}}\to\mathcal{D}\to \mathcal{C}
\end{equation}
The dashed arrows in the diagram are obtained by examining
the tower \eqref{eqn.tower}
near an orbifold point of $\mathcal{C}$.
If we pass to the henselization of the local ring of $\bP^1$ at a point of
$\Delta$ then we obtain, by base change,
$[\Spec(k[t]_{(t)}^{h})/\mu_2]\to \mathcal{C}$.
The base change of $\mathcal{D}$ is
\begin{equation}
\label{eqn.particularcover}
\Spec(k[t]_{(t)}^{h}) \amalg \coprod_{i=1}^3
[\Spec(k[t]_{(t)}^{h})/\mu_2],
\end{equation}
corresponding to the morphism $[\Spec(k[t]_{(t)}^{h})/\mu_2]\to B\fS_5$
given by a transposition in $\fS_5$.
Since $\tilde{\mathcal{D}}\to \mathcal{D}$ is obtained by base change from
$\tilde D\to D$, the only possibility is that
$\tilde{\mathcal{D}}\to \mathcal{D}$ base-changes to a trivial cover of
\eqref{eqn.particularcover}.
In particular, if we let $\mathcal{E}$ be the open substack
of $\tilde{\mathcal{D}}^{\times n}$ as in the description in
Section \ref{comparison},
then the base change of $\mathcal{E}/W(\sD_5)\to \mathcal{C}$ is
also a trivial cover, so since $\mathcal{C}$ is an orbifold $\bP^1$,
the cover $\mathcal{E}/W(\sD_5)\to \mathcal{C}$ must be globally trivial.
So there is a canonical $W(\sD_5)$-torsor over $\mathcal{C}$ corresponding to
the tower \eqref{eqn.tower}, and hence a bottom dashed arrow in the diagram,
determined up to unique 2-isomorphism.
(Recall, the involution of $\tilde{\mathcal{D}}$ over $\mathcal{D}$ switches
the two sections of $\mathcal{E}/W(\sD_5)\to \mathcal{C}$, so the
$W(\sD_5)$-torsor structure is canonical.)
The next dashed arrow is obtained using the universal property of a
fiber diagram, i.e., we have a canonically defined family of
log general type surfaces $ \epsilon \colon \mathcal Y \to \mathcal{C}$.
To give the final dashed arrow is equivalent to giving a section of
\begin{equation}
\label{eqn.etaleoverC}
\mathcal{C}
\times_{\widetilde{\mathcal{M}}}
( [ \overline{M}_{0,5} / \fS_5 ] \times _{ [ U / \PGL_2 ]}
\mathcal{M} )
\to
\mathcal{C}.
\end{equation}
This is an \'etale morphism, an isomorphism away from the orbifold points
of $\mathcal{C}$, and of degree $2$ over the
orbifold points of $\mathcal{C}$.
By Proposition \ref{prop.birational}, the fiber of \eqref{eqn.etaleoverC}
over an orbifold point of $\mathcal{C}$ is identified with the 2-element set of
components of the fiber of $\epsilon$, isomorphic to a resolution of a
singular Del Pezzo surface of degree 4; we call these the
Del Pezzo components.
Combining Propositions \ref{gerbeprop2}(ii) and \ref{prop.mu2action}, we see that
$\mu_2$ acts trivially on the set of Del Pezzo components, and
the $\mu_2$-action on one of the Del Pezzo components is trivial and on the
other is nontrivial.
So the morphism \eqref{eqn.etaleoverC} admits sections,
and a section is specified uniquely by dictating the choice of
Del Pezzo component that is acted upon trivially by $\mu_2$ at each
orbifold point.
With this uniquely specified section we have, canonically determined, a family
$\pi'\colon \cX'\to \mathcal{C}$ of Del Pezzo surfaces
and an isomorphism of the associated family of log general type surfaces
with $\mathcal Y$.
Since the $\mu_2$-action on the fiber of $\pi'$
at any orbifold point of $\mathcal{C}$ is trivial,
$\pi'$ is obtained by base change from a family
$\pi\colon \cX\to \bP^1$ of Del Pezzo surfaces, defined up to a
unique isomorphism.
The spectral curve is $D\subset \F\to \bP^1$, and the double cover associated to the
families of planes in singular quadric hypersurfaces is $\tilde D\to D$.
So we have shown that morphism of algebraic stacks, described in the statement
of the theorem, is surjective.
Noting the uniqueness (up to canonical 2-isomorphisms) of the dashed arrows
in \eqref{eqn.diagr2} and the uniqueness of the family $\pi$ in the last step,
we obtain that the morphism is an isomorphism.
\end{proof}

We apply this description to enumerate the components of general families
of given height.

\begin{theo}
\label{thm.main}
Fix a height $h$, even and positive.
The space of families of height $h$ satisfying Conditions $(\mathrm{G1})$ and $(\mathrm{G2})$
is empty when $h\le 6$ and for $h\ge 8$ consists of:
\begin{itemize}
\item[(i)] two components when $h=8$ or $h=10$;
\item[(ii)] one component when $h\ge 12$.
\end{itemize}
When $h=8$, the spectral curve is hyperelliptic,
hence the 2-torsion of the Jacobian is partitioned according to the
minimal number of Weierstrass points (minus the appropriate multiple
of the $g^1_2$) in the canonical represenation.
The two components with monodromy $W(\sD_5)$ correspond to sums of
two, respectively four Weierstrass points.
When $h=10$, the spectral curve is a plane quintic curve, hence
comes with a natural theta characteristic and an associated quadratic form on the
2-torsion of the Jacobian.
In this case the two components with monodromy $W(\sD_5)$
correspond to the two values of the quadratic form on
the nonzero points of the 2-torsion of the Jacobian of $D$.
\end{theo}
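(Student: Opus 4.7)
My plan is to invoke Theorem \ref{thm.maptoM}, which identifies the moduli of families satisfying $(\mathrm{G1})$ and $(\mathrm{G2})$ with the moduli of pairs $(D\subset\F\to\bP^1,\eta)$, where $D\to\bP^1$ is a spectral cover satisfying $(\mathrm{G1})'$ and $(\mathrm{G2})'$, and $\eta\in J(D)[2]$ corresponds to the double cover $\tilde D\to D$ of families of planes.  The condition $(\mathrm{G2})$ translates into $\eta\ne 0$: the extra $(\bZ/2\bZ)^4$-factor of $W(\sD_5)\cong(\bZ/2\bZ)^4\rtimes\fS_5$ over $\fS_5$ is generated by $\tilde D\to D$, which is nontrivial precisely when $\eta\ne 0$, and no proper subgroup strictly contains $\fS_5$ inside $W(\sD_5)$.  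Emptiness for $h\le 6$ is then immediate from Proposition \ref{noheight2prop}: one always has $h\ge 4$, and $(\mathrm{G2})'$ forces $D$ irreducible (so $h\ne 6$), while the monodromy condition $(\mathrm{G2})$ forces $h\ne 4$.  For $h\ge 8$ I first argue that the Hom stack $\Hom_{(\mathrm{G1})',(\mathrm{G2})'}(\bP^1,[U/\PGL_2];h)$ is irreducible: by Lemma \ref{lem.smoothexpdim} it is smooth of constant dimension, the morphism to $\Bun_{\PGL_2}$ has irreducible fibers (open subsets of $|D|$ on individual $\F_n$), and its image is open and nonempty in the single (connected) component of $\Bun_{\PGL_2}$ determined by the parity of $h/2$.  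Components of the moduli of pairs $(D,\eta\ne 0)$ then correspond to orbits of the total monodromy of this Hom stack on $J(D)[2]\smallsetminus\{0\}$ for a generic $D$.

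I treat the three regimes separately.  For $h=8$, a generic spectral cover on $\F_0$ has bidegree $(2,5)$; via the projection of degree $2$ it is hyperelliptic of genus $4$, so Example \ref{examhyperell} computes the monodromy on $J(D)[2]$ as the permutation action of $\fS_{10}$ on even subsets of the ten Weierstrass points, and nonzero orbits are indexed by $|S|/2\in\{1,2\}$, giving the two promised components corresponding to sums of two, respectively four, Weierstrass points.  For $h=10$, a generic $D\subset\F_1$ is a smooth plane quintic of genus $6$; by adjunction $K_D=\cO_{\bP^2}(2)|_D$, so $\vartheta=\cO_{\bP^2}(1)|_D$ is a canonical theta characteristic, defining a quadratic refinement $q$ of the Weil pairing on $J(D)[2]$ that the monodromy must preserve.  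Adapting the Lefschetz-pencil argument of Lemma \ref{lem.monodromy}, I would show that the vanishing cycles $\delta$ of one-nodal degenerations of $D$ satisfy $q(\delta)=1$, so the associated transvections generate all of $O(q)$; the two nonzero orbits are then labeled by the value of $q$.  For $h\ge 12$, direct enumeration of admissible splittings of $\pi_*(\cI_{\cX}(2))$ shows the generic Hirzebruch surface is $\F_0$ with bidegree $(h/4,5)$ when $h\equiv 0\pmod 4$, or $\F_1$ with class $\tfrac{h+10}{4}f+5\xi$ when $h\equiv 2\pmod 4$; in each case the bidegree parameters satisfy the hypotheses of Lemma \ref{lem.monodromy} (since $b=5$ is odd and $\ge 3$), so the monodromy on $J(D)[2]$ is the full symplectic group, which acts transitively on $J(D)[2]\smallsetminus\{0\}$, yielding a single component.

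The main obstacle will be the $h=10$ case, specifically verifying that the monodromy equals $O(q)$ and not a proper subgroup.  This requires two ingredients modeled on the proof of Lemma \ref{lem.monodromy}: first, exhibiting a degeneration of smooth plane quintics to an $\mathsf E_6$-singular curve whose versal deformation is realized inside $|5h|$, and a reducible degeneration into two smooth components meeting in an odd number of points; second, the Picard-Lefschetz verification that $q(\delta)=1$ for the resulting vanishing cycles.  A subsidiary but routine point is that starting from $h=20$ multiple $\F_n$-strata appear in the Hom stack; by semicontinuity of the splitting type of rank-$2$ bundles on $\bP^1$ these additional strata lie in the closure of the generic $\F_0$- or $\F_1$-stratum inside the same component of $\Bun_{\PGL_2}$, so the monodromy computed there governs the entire Hom stack.
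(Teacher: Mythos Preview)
Your proposal is correct and follows essentially the same route as the paper: reduce via Theorem \ref{thm.maptoM} to pairs $(D,\eta)$, use Proposition \ref{noheight2prop} for $h\le 6$, use smoothness of the morphism \eqref{eqn.toBun} to restrict to the generic Hirzebruch surface $\F_0$ or $\F_1$, and then compute the monodromy on $J(D)[2]$ via Lemma \ref{lem.monodromy} for $h\ge 12$, Example \ref{examhyperell} for $h=8$, and the plane-quintic theta-characteristic picture for $h=10$.  You are somewhat more explicit than the paper in spelling out the irreducibility of $\Hom_{(\mathrm{G1})',(\mathrm{G2})'}(\bP^1,[U/\PGL_2];h)$ and the translation of $(\mathrm{G2})$ into $\eta\ne 0$; conversely, for $h=10$ the paper simply invokes that the monodromy is ``cut down to $O(q)$'' (the classical Beauville result for plane curves), whereas you propose to re-derive this via the method of Lemma \ref{lem.monodromy}, which is sound but unnecessary.
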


\begin{proof}
By Theorem \ref{thm.maptoM}, we may consider spectral curves in
Hirzebruch surfaces with
choice of 2-torsion in the Jacobian.
By the relative smoothness assertion of Lemma \ref{lem.smoothexpdim},
we may restrict to case of the Hirzebruch surfaces $\F_0=\bP^1\times \bP^1$
and $\F_1$.

Assume $h\ge 12$.
By the formula \eqref{eqnD},
for $h$ divisible by 4 we have
$D\subset \bP^1\times \bP^1$ of bidegree $(h/4,5)$,
and for $h$ congruent to 2 modulo 4 we have
$D\subset \F_1$ with $[D]=((h+10)/4)f+5\xi$.
In each case Lemma \ref{lem.monodromy} implies that the monodromy
action on the 2-torsion in the Jacobian of the spectral curve is the
full symplectic group.
So the space of pairs $(D,\tilde D\to D)$ with $D$ as above
and $\tilde D\to D$ a nontrivial unramified degree 2 cover consists of a
single component.

When $h=10$ we have $D\subset \F_1$ with $[D]=5f+5\xi$,
i.e., if we identify $\F_1$ with the blow-up of a point in the projective plane
then $D$ is the pre-image of a smooth quintic curve not passing through the
point that is blown up.
In this case the restriction of $\cO_{\bP^2}(1)$
is a theta characteristic, so the monodromy group is cut down to
$\mathrm{O}(H^2(D,\Z/2\Z),q)$ where $q$ is the corresponding quadratic form.
When $h=8$, the spectral curve is of bidegree $(2,5)$ in
$\bP^1\times \bP^1$, and Example \ref{examhyperell} furnishes the
complete description.
\end{proof}

\section{Examples of families of low height}
\label{sect:low-height}

In this section, we provide examples, 
in height 8 and 10, of distinct families
of quartic Del Pezzo surface fibrations 
$\pi:\cX\ra \bP^1$, of expected dimension and maximal monodromy.

\

\noindent
{\bf Height 8}:
We recall the construction from \cite[Remark 15]{HT13}. 
Let 
$$
\cX\subset \bP^1\times \bP^5
$$ 
be a complete intersection of a form of bidegree $(1,1)$ and two forms of bidegree $(0,2)$. 
The projection to the second factor $\cX\ra \cY\subset \bP^5$ gives a 
complete intersection of two quadrics, and the quartic 
Del Pezzo surface fibration $\pi: \cX\ra \bP^1$ 
corresponds to a pencil of hyperplane sections, 
with base locus a smooth curve $E$ of genus 1. In turn, 
projection from a line $\ell$ in $\cY$ is  
the blowup of $\bP^3$ in a quintic curve of genus 2, so that 
$$
\IJ(\cX)\simeq\JJ(C)\times E.
$$

\

\noindent
{\bf Height 8}:  Consider the vector bundle
$$E=\cO_{\bP^1 \times \bP^1}^2 \oplus \cO_{\bP^1 \times \bP^1}(-1,-1)$$
and its projectivization $\phi:\bP(E) \ra \bP^1 \times \bP^1$.  
Let $\phi_i:\bP(E) \ra \bP^1$ denote the resulting projections.
Let $W\subset \bP(E)$ be a conic fibration corresponding to $\cO_{\bP(E)}(2)\otimes \phi_2^*\cO_{\bP^1}(1)$,
given by a section of 
$$
\Sym^2(E^{\vee})\otimes \cO_{\bP^1 \times \bP^1}(0,1).
$$  
Such a section corresponds to a symmetric matrix of forms
$$A:=\left(\begin{matrix} A_{11} & A_{12}  & A_{13}   \\
		       A_{12}& A_{22}&  A_{23} \\
		       A_{13} & A_{23} & A_{33} \end{matrix} \right)
$$
where $A_{11},A_{12},A_{22}$ have bidegree $(0,1)$, $A_{13}$ and
$A_{23}$ have bidegree $(1,2)$, and $A_{33}$ has bidegree $(2,3)$.
Note that 
$$
h^0(\Sym^2(E^{\vee})\otimes \cO_{\bP^1 \times \bP^1}(0,1))=30
$$
and  $h^0(\End(E))=13$,
so $\dim \Aut(\bP(E))=18$ and hence $W$ depends on $11$ parameters.  

The discriminant
curve $D \subset \bP^1 \times \bP^1$ has bidegree $(2,5)$, thus is hyperelliptic of genus four.
Thus the projection $\phi_1:W \ra \bP^1$ has fibers isomorphic to conic bundles with five degenerate
fibers, whence cubic surfaces.  
Consider the subvariety 
$$
W'=\bP(\cO_{\bP^1 \times \bP^1}^2)\cap W \subset \bP^1_1 \times \bP^1_2 \times \bP^1_3;
$$
we order so that $\phi_1$ and $\phi_2$ map to $\bP^1_1$ and $\bP^1_2$ and
$\bP^1_3$ is the fiber of $\bP(\cO^2_{\bP^1\times\bP^1})\ra \bP^1_1 \times \bP^1_2$.  We regard $W'$ as a bisection of 
$W\ra \bP^1_1 \times \bP^1_2$.
A degree computation shows that $W'$ is the preimage of a 
curve $B\subset \bP^1_2 \times \bP^1_3$ of bidegree $(1,2)$.  

Now $f_D:D \ra \bP^1_2$ and $f_B:B \ra \bP^1_2$ both have degree two with ten
and two branch points respectively. 
\begin{lemm}  The branch locus of $f_B$ is contained in the branch locus of $f_D$. 
\end{lemm}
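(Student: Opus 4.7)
The plan is to write $B$ explicitly in terms of the matrix $A$ and then reduce the lemma to a direct algebraic identity.

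First I would identify $B$ concretely. Since $A_{11}, A_{12}, A_{22}$ all have bidegree $(0,1)$, they depend only on the coordinate $s$ of $\bP^1_2$. Restricting the quadric defining $W$ to the subbundle $\bP(\cO_{\bP^1\times\bP^1}^2) = \{y_3 = 0\}$ gives the equation
\[
A_{11}(s)\, y_1^2 + 2 A_{12}(s)\, y_1 y_2 + A_{22}(s)\, y_2^2 = 0,
\]
which is pulled back from $\bP^1_2 \times \bP^1_3$: this defines $B$. Setting $A' = \left(\begin{smallmatrix} A_{11} & A_{12} \\ A_{12} & A_{22}\end{smallmatrix}\right)$, the branch locus of $f_B$ is the divisor $\{\det(A') = 0\} \subset \bP^1_2$ of degree $2$. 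Meanwhile the branch locus of $f_D$ consists of those $s_0 \in \bP^1_2$ at which the degree-$2$ form $\det(A)(-, s_0)$ in the $\bP^1_1$-coordinate $t$ has a double root.

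The key step is the polynomial identity
\[
A_{11}\det(A) = (A_{11}A_{33} - A_{13}^2)\det(A') - (A_{11}A_{23} - A_{12}A_{13})^2,
\]
which I would verify by cofactor expansion of $\det(A)$ along the first row, followed by completing the square in the terms involving $A_{13}$ and $A_{23}$. Both sides have bidegree $(2,6)$, which is a useful consistency check. Evaluating at $s_0$ with $\det(A'(s_0)) = 0$ yields
\[
A_{11}(s_0)\, \det(A)(t, s_0) = -\bigl(A_{11}(s_0) A_{23}(t, s_0) - A_{12}(s_0) A_{13}(t, s_0)\bigr)^2,
\]
so provided $A_{11}(s_0) \neq 0$, the form $\det(A)(-, s_0)$ is a nonzero scalar times the square of a linear form in $t$, hence has a double root. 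This places $s_0$ in the branch locus of $f_D$.

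The one edge case is $A_{11}(s_0) = 0$; then $\det(A')(s_0) = -A_{12}(s_0)^2$ forces $A_{12}(s_0) = 0$ as well, and substituting directly into the cofactor formula gives $\det(A)(t, s_0) = -A_{22}(s_0) A_{13}(t, s_0)^2$, again a perfect square. The fully degenerate possibility $A_{11}(s_0) = A_{12}(s_0) = A_{22}(s_0) = 0$ is excluded since otherwise $\det(A)(-, s_0) \equiv 0$ and $D$ would contain the entire fiber $\bP^1_1 \times \{s_0\}$, contradicting that $D$ is a reduced curve of bidegree $(2,5)$. The only nontrivial part of the argument is finding the identity; once it is in hand, everything else is immediate verification.
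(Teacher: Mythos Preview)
Your proof is correct and reaches the same conclusion as the paper, but via a slightly different algebraic route. The paper first reduces $\det(A)$ modulo $A_{11}A_{22}-A_{12}^2$ to obtain
\[
\det(A)\equiv -A_{11}A_{23}^2 - A_{22}A_{13}^2 + 2A_{12}A_{13}A_{23},
\]
expands this as a binary quadratic $au^2+2buv+cv^2$ in the $\bP^1_1$-coordinates, and then checks directly that its discriminant factors as
\[
ac-b^2=(A_{11}A_{22}-A_{12}^2)\,(A_{13}'A_{23}''-A_{13}''A_{23}')^2.
\]
This is a discriminant computation rather than a pointwise evaluation, so no case analysis on the vanishing of $A_{11}$ is needed, and one sees at once that the branch divisor of $f_B$ appears in the branch divisor of $f_D$ even scheme-theoretically. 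Your argument instead uses the Dodgson/Schur-complement identity $A_{11}\det(A)=(A_{11}A_{33}-A_{13}^2)\det(A')-(A_{11}A_{23}-A_{12}A_{13})^2$ and evaluates at zeros of $\det(A')$; this is equally valid and perhaps more recognisable, at the cost of the extra case $A_{11}(s_0)=0$. One small point: your exclusion of the fully degenerate case appeals to ``$D$ reduced of bidegree $(2,5)$'', but a reduced $(2,5)$-curve can certainly contain a $(0,1)$-fiber; what you actually need is the standing genericity (so that $D$ is irreducible, equivalently $f_D$ is finite), which is assumed throughout this example.
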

\begin{proof}
Recall that $D=\{\det(A)=0\}.$
The branch locus of $f_B$ is given by $\{A_{11}A_{22}-A_{12}^2=0\}$.
Note that
$$\det(A)\equiv -A_{11}A_{23}^2-A_{22}A_{13}^2+2A_{12}A_{23}A_{13}\pmod{A_{11}A_{22}-A_{12}^2}.$$
Let $u$ and $v$ be homogeneous coordinates of $\bP^1_1$; write
$$A_{13}=A_{13}'u+A_{13}''v, \quad A_{23}=A_{23}'u+A_{23}''$$
and expand
$$-A_{11}A_{23}^2-A_{22}A_{13}^2+2A_{12}A_{23}A_{13}=a u^2 + 2b uv + c v^2, \quad a,b,c \in \Gamma(\cO_{\bP^1_2}(5)).$$
The branch locus of $f_D$ equals $\{ac-b^2=0\}$ modulo $A_{11}A_{22}-A_{12}^2$.    
A direct computation shows
\begin{eqnarray*}
ac-b^2&=&(A_{11}A_{22}-A_{12}^2)(A_{13}'A_{23}''-A_{13}''A_{23}')^2 \\
      &=&\det \left(\begin{matrix}A_{11} &  A_{12} \\ A_{12} & A_{22} \end{matrix} \right)
      \det \left(\begin{matrix}A'_{13} &  A''_{13} \\ A'_{23} & A''_{23} \end{matrix} \right)^2.
\end{eqnarray*}
\end{proof}

This covering data determines the intermediate Jacobian $\IJ(W)$:  The general theory of conic bundles over 
rational surfaces \cite{beauville77} implies $\IJ(W)=\Prym(\tilde{D}\ra D)$ for the \'etale double cover
arising from the irreducible components of the singular conics over the discriminant $D$.  In our
situation, $\tilde{D}$ is the normalization of the fiber product $D\times_{\bP^1_2} B$.  
It follows that 
$$\Prym(\tilde{D} \ra D)=\JJ(C),$$
where $C$ is a double curve of $\bP^1_2$ branched over the complement of
the branch locus of $f_B$ in the branch locus of $f_D$ \cite[p.~303]{acgh}.  
In particular, $C$ is hyperelliptic of genus three.  

Fixing $p \in \bP^1_1$, we find that 
$$\bP(E)|_{\{p\} \times \bP^1_2} \simeq \bP(\cO_{\bP^1}^2 \oplus \cO_{\bP^1}(-1)) \subset \bP^3 \times \bP^1_2,$$
i.e., the planes containing a fixed line $\ell \subset \bP^3$.  Morever
$W|_{\{p\}\times \bP^1_2}$
may be interpreted as one of the cubic surfaces containing this line, and $W'|_{\{p\}\times \bP^1_2}$
as the bisection induced by $\ell$.  Blowing down $\ell$ in the generic fiber of $\phi_1$, we obtain a fibration
in quartic Del Pezzo surfaces 
$$
\pi:\cX \ra \bP^1_1.
$$  

\

\noindent
{\bf Height 10}: 
Consider 
$$
\cX\subset \bP^1\times \bP^4, 
$$
given as a complete intersection of forms of bidegree $(0, 2)$ and $(1,2)$. Then 
$\cX=\Bl_C(Q)$, the blowup of a smooth quadric $Q\subset \bP^4$ in a smooth 
canonical curve $C$ of genus 5, 
the base locus of a pencil of quadrics on $Q$ defining $\pi: \cX\ra \bP^1$. Thus
$$
\IJ(\cX)\simeq \JJ(C).
$$

\

\noindent
{\bf Height 10}:  Fix a smooth cubic threefold $\cW\subset \bP^4$ and a conic curve $Q\subset \cW$;
this data depends on $14$ parameters.
Let $\ell \subset \cW$ denote the line residual to $Q$ in $P=\mathrm{span}(Q)$ and set $\ell \cap Q=\{w_1,w_2\}$.
Consider the pencil of hyperplane sections of $\cW$ associated with $P$, which induces a cubic
surface fibration
$$\Bl_{\ell \cup Q} (\cW) \dashrightarrow \bP^1.$$
The total space has two ordinary threefold singularities over $w_1$ and $w_2$; these
are in the fibers associated with the tangent hyperplanes to $\cW$ at $w_1$ and $w_2$.  
A small resolution $\widetilde{\cW}\ra \Bl_{\ell \cup Q}$ 
may be obtained by blowing up $Q$ and then $\ell$.  
Each fiber contains of $\varpi:\widetilde{\cW} \ra \bP^1$ contains $\ell$ as well as $Q$.  
Blowing down the exceptional divisor $\ell\times \bP^1\subset \widetilde{\cW}$ yields a fibration
$$\pi:\cX \ra \bP^1$$
in quartic Del Pezzo surfaces.  

The intermediate Jacobian $\IJ(\cX)\simeq \IJ(\cW)$ has numerous Prym interpretations:  
For each line $\ell \subset \cW$
projecting from $\ell$ induces a conic bundle structure 
$$\Bl_{\ell}(\cW) \ra \bP^2$$
with discriminant curve a plane quintic $D$; 
$$
\IJ(\cW) \simeq \Prym(\tilde{D} \ra D).
$$ 
The conic $Q$ corresponds
to fixing a point $q\in \bP^2$, and the spectral cover $D\ra \bP^1$ arises from projective from $q$.

To get explicit equations for $\cX$, consider the vector bundle
$$V=\cO_{\bP^1}(-2) \oplus \cO_{\bP^1}(-1)^3 + \cO_{\bP^1}$$
and the associated projective bundle $\bP(V)$ with relative hyperplane class 
$\eta$.  Let $h$ be the pull back of the hyperplane class from $\bP^1$.  
Let $\cX$ be a complete intersection of divisors in
$\bP(V)$ of degree $2\eta-2h$ and $2\eta-h$.  
The canonical class of $\bP(V)$ is $-5\eta+3h$ so the
canonical class of $\cX$ is $-\eta$.

We have a natural inclusion of $V \subset \cO_{\bP^1}^{10}$ inducing a morphism
$$\cX \ra \bP^9$$
with image $\cY$ singular at the image of the summand $\cO_{\bP^1}\subset V$.  This is a singular Fano threefold of genus
eight; the smooth varieties in this class arise as codimension-five linear sections of $\Gr(2,6)$.

\appendix

\section{Alternative characterization of height}
\label{appendix}

Working over $\bC$, we consider a flat family $\pi\colon \cX\to \bP^1$ of
degree 4 Del Pezzo surfaces
with $\mathsf{ADE}$-singularities.
In Definition \ref{height} we defined the height $h(\cX)$ as the degree
of a vector bundle on $\bP^1$.
If we assume that family is generically smooth with square-free
discriminant, then $\cX$ is a smooth projective
threefold.
In this case in \cite{HT13} the height is
defined as a triple intersection number on $\cX$.
Here we show that these two definitions agree.

\begin{prop}
Let $\pi\colon \cX\to \bP^1_{\bC}$ be a generically smooth family of
degree 4 Del Pezzo surfaces with square-free discriminant.
Then
\[
\int_{\cX} c_1(\omega_\pi)^3 = -2\deg (\pi_*\omega_\pi^{-1}).
\]
\end{prop}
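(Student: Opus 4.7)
The plan is to integrate on the ambient projective bundle $p\colon \bP(E^\vee) \to \bP^1$, where $E := \pi_*\omega_\pi^{-1}$ and $\cX \hookrightarrow \bP(E^\vee)$ is the relative anticanonical embedding. This embedding is the one defined by the surjection $\pi^*E \twoheadrightarrow \omega_\pi^{-1}$, so the restriction of $\cO_{\bP(E^\vee)}(1)$ to $\cX$ is $\omega_\pi^{-1}$. Setting $\xi := c_1(\cO_{\bP(E^\vee)}(1))$, this gives
\[
\int_\cX c_1(\omega_\pi)^3 = -\int_\cX \xi^3|_\cX = -\int_{\bP(E^\vee)} \xi^3 \cdot [\cX].
\]

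To compute the class $[\cX]$ I use that the pencil of quadrics defining $\cX$ is encoded by the inclusion of rank-2 bundles $F \hookrightarrow \Sym^2 E = p_*\cO_{\bP(E^\vee)}(2)$, where $F := \pi_*\cI_\cX(2)$. By Lemma \ref{lem:easyintersectiontheory}, $\cX$ is the zero scheme of a section of $\cO_{\bP(E^\vee)}(2) \otimes p^*F^\vee$, and since the fibers of $\cX \to \bP^1$ are of the expected codimension $2$,
\[
[\cX] = c_2(\cO_{\bP(E^\vee)}(2) \otimes p^*F^\vee) = 4\xi^2 - 2\xi \cdot p^*c_1(F),
\]
where the second equality uses the rank-2 Chern class expansion together with $c_2(F) = 0$ on $\bP^1$.

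It remains to evaluate $\int_{\bP(E^\vee)} \xi^5$ and $\int_{\bP(E^\vee)} \xi^4 \cdot p^*c_1(F)$. Since $E$ is a rank-$5$ bundle on $\bP^1$, all Chern classes $c_i(E)$ with $i \geq 2$ vanish, and the defining relation of $A^*(\bP(E^\vee))$ collapses to $\xi^5 = p^*c_1(E) \cdot \xi^4$. Combined with the standard identity $p_*\xi^4 = [\bP^1]$ for a $\bP^4$-bundle, this gives $\int_{\bP(E^\vee)} \xi^5 = \deg E$ and $\int_{\bP(E^\vee)} \xi^4 \cdot p^*c_1(F) = \deg F$, whence
\[
\int_\cX c_1(\omega_\pi)^3 = -4\deg E + 2\deg F.
\]

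Finally, Corollary \ref{coro:numbericalinvariant} asserts $\deg E = \deg F$, which converts the right-hand side into $-2\deg E = -2\deg(\pi_*\omega_\pi^{-1})$, as required. No deep obstacle arises; the main point to double-check is the sign convention for the Chow ring of $\bP(E^\vee)$, which in the paper's classical convention (so that $\cO_{\bP(E^\vee)}(-1)$ is the tautological subsheaf of $p^*E^\vee$ and $\cO_{\bP(E^\vee)}(1)|_\cX = \omega_\pi^{-1}$) yields the relation $\xi^5 = p^*c_1(E) \cdot \xi^4$ rather than its negative, consistent with the computation of $[D]$ performed in the proof of Proposition \ref{noheight2prop}.
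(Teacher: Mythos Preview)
Your proof is correct and follows essentially the same route as the paper's: compute $[\cX]$ via Lemma~\ref{lem:easyintersectiontheory}, push the cube of the relative hyperplane class against it on $\bP(E^\vee)$, and finish with the degree identity $\deg E=\deg F$ from Corollary~\ref{coro:numbericalinvariant}. The only cosmetic difference is that you track $c_1(\omega_\pi)^3$ directly with a sign while the paper computes $c_1(\omega_\pi^{-1})^3$ and negates at the end; you also (correctly) write $p^*F^\vee$ in the Chern class, fixing a harmless typo in the paper's display.
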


\begin{proof}
Applying Lemma \ref{lem:easyintersectiontheory}, we have
\begin{align*}
[\cX]&=c_2(\cO_{\bP((\pi_*\omega_\pi^{-1})^\vee)}(2)\otimes \pi^*\pi_*(\cI_{\cX}(2))) \\
&=4c_1(\cO_{\bP((\pi_*\omega_\pi^{-1})^\vee)}(1))^2
-2\pi^*c_1(\pi_*(\cI_{\cX}(2)))c_1(\cO_{\bP((\pi_*\omega_\pi^{-1})^\vee)}(1)).
\end{align*}
Therefore,
\begin{align*}
\int_\cX c_1(\omega_\pi^{-1})^3&=
\int_{\bP((\pi_*\omega_\pi^{-1})^\vee)} [\cX]\cdot c_1(\cO_{\bP((\pi_*\omega_\pi^{-1})^\vee)}(1))^3\\
&=4\int_{\bP((\pi_*\omega_\pi^{-1})^\vee)}
c_1(\cO_{\bP((\pi_*\omega_\pi^{-1})^\vee)}(1))^5\\
&\qquad
-2\int_{\bP((\pi_*\omega_\pi^{-1})^\vee)}
\pi^*c_1(\pi_*(\cI_{\cX}(2)))c_1(\cO_{\bP((\pi_*\omega_\pi^{-1})^\vee)}(1))^4\\
&=4\int_{\bP^1} c_1(\pi_*\omega_\pi^{-1})
-2\int_{\bP^1} c_1(\pi_*(\cI_{\cX}(2)))\\
&=2\int_{\bP^1} c_1(\pi_*\omega_\pi^{-1}).
\end{align*}
where at the last step we have used
\eqref{degequalsdeg}.
\end{proof}

\end{document}